\DeclareFontFamily{OML}{script}{}
\DeclareFontShape{OML}{script}{m}{it}
{ <5-20> rsfs10 }{}
\DeclareMathAlphabet{\mathscript}{OML}{script}{m}{it}
\renewcommand{\mathcal}[1]{{\mathscript #1}\hspace{0.2ex}}
\definecolor{xing}{RGB}{138, 43, 226}
\newcommand{\red}{\color{red}}
\newcommand{\re}[1]{\mbox{\rm$($\ref{#1}$)$}}
\newcommand{\dis}{\displaystyle}
\newcommand{\m}{\hspace{1em}}
\newcommand{\mm}{\hspace{2em}}
\newcommand{\xx}{\vspace*{2ex}}
\newcommand{\Rmnum}[1]{\uppercase\expandafter{\romannumeral #1}}
\renewcommand{\epsilon}{\varepsilon}
\newcommand{\text}{\mbox}
\newcommand{\operatorname}{\mathop}
\newcommand\be{\begin{equation}}
\newcommand\ee{\end{equation}}
\newcommand\bea{\begin{eqnarray}}
\newcommand\eea{\end{eqnarray}}
\newcommand\beaa{\begin{eqnarray*}}
\newcommand\eeaa{\end{eqnarray*}}
\newcommand{\spn}{\mathrm{span}}
\newcommand{\codim}{\mathrm{codim}}
\newcommand{\im}{\mathrm{Im}}
\newenvironment{eqa}{\begin{equation}%
  \begin{array}{rcl}}{\end{array}\end{equation}}
\newcommand\beqa{\begin{eqa}}
\newcommand\eeqa{\end{eqa}}
\numberwithin{equation}{section}
\renewcommand{\tilde}{\widetilde}
\renewcommand{\hat}{\widehat}
\renewcommand{\bar}{\overline}
\newtheorem{thm}{Theorem}[section]
\newtheorem{cor}[thm]{Corollary}
\newtheorem{lem}[thm]{Lemma}
\newtheorem{rem}[thm]{Remark}
\newcommand{\void}[1]{}
\newcommand{\bR}{{\mathbb R}}  
\numberwithin{equation}{section}
\begin{document}
\title[
periodic solutions for a free-boundary tumor model]{Symmetry-breaking bifurcation of periodic solutions for a free-boundary tumor model\footnote{\today}
}

\footnote{* Corresponding author:~Ruixiang Xing ~~~~~~ $^{*}$E-mail\,$: xingrx@mail.sysu.edu.cn $}
\footnote{$^{1}$E-mail\,$:hwh@sxu.edu.cn $  $^2$E-mail\,$:mxwang@sxu.edu.cn $}
 \author[Wenhua He, Mingxin Wang, and Ruixiang Xing]{ Wenhua He$^\dag$, Mingxin Wang$^\dag$, and Ruixiang Xing$^{*}$ \\ $^\dag$School of Mathematics and Statistics, Shanxi University, Taiyuan 030006, China\\
 $^{*}$School of Mathematics, Sun Yat-sen University, Guangzhou 510275, China}

\begin{abstract}
In this paper, we consider a free boundary multi-layer tumor model that incorporates a $T-$periodic provision of external nutrients $\Phi(t)$.
The simplified model contains three parameters: the mean of periodic external nutrients $\Phi(t)$, the threshold concentration $\widetilde{\sigma}$ for proliferation and the cell to cell adhesiveness coefficient $\gamma$.
We first study the flat solution and give a complete classification about $\frac{1}{T} \int_0^T \Phi(t) d t$ and $\widetilde{\sigma}$ according to global stability of zero equilibrium solution or global stability of the positive periodic solution.
Precisely,
(i) a zero flat solution is globally stable under the flat perturbations if and only if $\widetilde{\sigma} \geqslant \frac{1}{T} \int_0^T \Phi(t) d t$; (ii) If $\widetilde{\sigma}<\frac{1}{T} \int_0^T \Phi(t) d t$, then there exists a unique positive flat solution $\left(\sigma_*(y, t), p_*(y, t), { \rho_*(t)}\right)$ with period $T$ and it is a global attractor of all positive flat solutions for all $\gamma>0$. We further investigate periodic solutions bifurcating from the flat periodic solution $\left(\sigma_*(y, t), p_*(y, t), { \rho_*(t)}\right)$. By periodicity and symmetry,
we not only give symmetry-breaking periodic solutions for all positive parameter $\gamma_j$, but also show the existence of a plethora of periodic bifurcations. For the free boundary tumor problem, this is the first result of the existence of periodic bifurcations.

 \xx\noindent
{\bf Keywords.}
Tumor growth; Free boundary problem; Periodic solution; Bifurcation.

\xx\noindent
{\bf 2010 mathematics subject classifications.} 34C25, 35R35, 35R37,  35Q92, 92C37
\end{abstract}

\maketitle
\section{Introduction}
We consider a free boundary problem modeling a multi-layer tumor with a periodic provision of external nutrients. Let
$$
\Omega(t) = \{(x_1,x_2,y)\in \bR^{2}\times \bR \;\big|\;  0<y< \rho(x_1,x_2,t)\} 
$$
be a flat-shaped region of a 3-dimensional tumor where the positive function $\rho$ is unknown.
Denote $\Gamma(t)$ by  the upper free boundary
$\{y | y= \rho(x_1,x_2,t)\}$ of $\Omega(t)$, which is a permeable layer and  $\Gamma_0$ by the lower boundary $\{y|y=0\}$, which is fixed and impermeable.
The flat-shaped domain of tumor is used to study the metabolism of multi-layer tumor tissues.
For more details, we refer to the papers \cite{1997Three,2004Three,kyle1999characterization}.

The concentration $\sigma$ of nutrient (e.g., oxygen or glucose) satisfies the reaction-diffusion equation:
\begin{eqnarray}\label{1.1}
\lambda\sigma_t - \Delta\sigma + \sigma = 0, \hspace{2em} (x_{1}, x_{2}, y)\in\Omega(t),\hspace{0.5em} t>0,
\end{eqnarray}
with the boundary condition
\begin{eqnarray}
\displaystyle\frac{\partial \sigma}{\partial y}\Big|_{\Gamma_0} =0 , \hspace{2em} \sigma \Big|_{\Gamma(t)} = \Phi(t), \hspace{2em} t>0.\label{1.2}
\end{eqnarray}
We assume that external nutrient concentration $\Phi(t)$ is a positive continuous function with a period $T$ instead of constant external nutrient concentration \cite{CE1}. It is more reasonable. Here, $\lambda$ denotes the ratio of the {nutrients'} diffusion rate
to the
cell proliferation rate
and $\lambda\ll1$  (see \cite{Byrne1995}). In this paper, we consider the quasi-steady state approximation, i.e.,
$\lambda=0$.

Let $p$ be the pressure, $S$ represent the proliferation rate and $\vec{V} $ denote the velocity of the tumor cell movement.
 Assuming that the tumor is porous medium type, Darcy's law ($\vec{V}=-\nabla p$) and the conversation of mass ($\mbox{div} \vec V = S$) imply $ -\Delta p = S$.  Suppose that $S=  \mu(\sigma-\tilde{\sigma})$, where $\mu$
is the tumor aggressiveness constant and $\tilde \sigma$
represents the threshold concentration for proliferation.
Then $p$ satisfies the following equation:
\begin{equation}\label{1.5b}
-\Delta p =\mu(\sigma-\tilde{\sigma}),\hspace{2em} (x_{1}, x_{2}, y)\in\Omega(t),\hspace{0.5em} t>0,
\end{equation}
with the boundary condition
\begin{eqnarray}\label{1.6}
 \displaystyle\frac{\partial p}{\partial y} \Big|_{\Gamma_0}=0, \hspace{2em} p \Big|_{\Gamma(t)}= \gamma\kappa ,\hspace{2em} t>0,
\end{eqnarray}
 where $\gamma$ is the cell to cell adhesiveness and $\kappa$ is the mean curvature.
Supposing that the velocity field is continuous to the free boundary, then the normal velocity on
  $\Gamma(t)$ is
\begin{equation}\label{1.8}
V_n =  -\frac{\partial p}{\partial n},  \hspace{2em} (x_{1}, x_{2},y)\in \Gamma(t),\hspace{0.5em} t>0,
\end{equation}
where $n$ is the unit outside the normal vector.
The initial value of domain $\Omega(t)  $ is
$
 \Omega_0.
$

There are some interesting results for the multi-layer tumor model with \textit{constant} external nutrient concentration. For the quasi-steady state approximation,
  Cui and Escher \cite{CE1} have established local well-posedness by means of the analytic
 semigroup theory. Also under the assumption $\Phi>\widetilde{\sigma}$, they have shown the existence and uniqueness of the positive flat stationary solution and its asymptotic behavior under non-flat perturbations.
For the 2-dimensional tumor model, Zhou, Escher and Cui \cite{2008Bifurcation} have studied the stationary bifurcation of the corresponding steady-state problem.
In the presence of inhibitors, Zhou, Wu and Cui \cite{zwc} have derived the local existence and asymptotic behavior of flat stationary solutions under non-flat perturbations. For the 2-dimensional tumor problem, Lu and Hu \cite{2020Bifurcation} studied the stationary bifurcation of tumor growth with ECM
and MDE interactions.
Recently, for a tumor model with time delay, He, Xing and Hu considered the linear stability of the  flat stationary solution under non-flat perturbations for quasi-steady state approximation in \cite{hxh1} and for general case with $\lambda\neq 0$ in \cite{hxh2}, respectively.
He and Xing \cite{hx2023} have established the existence of the stationary bifurcations for a tumor model with time delay.

For the classical tumor growth models with a sphere-shaped domain and periodic external nutrients, Bai and Xu \cite{Bai2013} have shown that the zero equilibrium solution is globally stable if  $\widetilde{\sigma}>\frac{1}{T} \int_{0}^{T} \Phi(t) d t$ and if the zero equilibrium solution is globally stable, $\widetilde{\sigma}\ge\frac{1}{T} \int_{0}^{T} \Phi(t) d t$. Also, they have proved the existence, uniqueness and stability of the positive periodic solution under the assumption $\min _{0 \le t \le T} \Phi(t)>\widetilde{\sigma} $. When $\Phi(t)$ is not a constant function, then $\min _{0 \le t \le T} \Phi(t)<\frac{1}{T} \int_{0}^{T} \Phi(t) d t$ and there is a gap $\widetilde{\sigma}\in[\min _{0 \le t \le T} \Phi(t), \frac{1}{T} \int_{0}^{T} \Phi(t) d t)$ to be answered.
{For the 2-dimensional  quasi-steady state problem}, Huang, Zhang and Hu \cite{Huang2019} and Huang \cite{huang2022} have described the linear stability and asymptotic stability of the positive $T-$periodic solution under non-radial perturbations. Recently, He and Xing \cite{he} have filled the gap in \cite{Bai2013}, given a complete classification about $ \Phi(t)$ and $\widetilde{\sigma}$ according to the global stability of zero equilibrium and the existence of periodic solutions, and shown the linear stability of the positive $T-$periodic solution under non-radial perturbations for the 3-dimensional case.

{In this paper,} we first study the flat solution and give a complete classification about $\frac{1}{T} \int_0^T \Phi(t) d t$ and $\widetilde{\sigma}$ according to the global stability of zero equilibrium solution or global stability of the positive periodic solution.
The detailed results are stated in Theorems \ref{thm:1.1a'}--\ref{thm:1.2}. The proof of Theorems \ref{thm:1.1a'}--\ref{thm:1.2} follows the ideas in \cite{he}.
Unlike the sphere-shaped model in \cite{he}, our model has a flat-shaped domain that causes various distinct computations and estimates. These are not the main results of this paper.
For the sake of the proof's completeness and periodic solutions used in the below branching results, we give the proof in the appendix.

The corresponding flat problem of \re{1.1}--\re{1.8} is
\begin{align}\label{1.10a}
&\frac{\partial^{2} \sigma}{\partial y^{2}}=\sigma    && y \in(0, \rho(t)),\hspace{0.5em}  t>0,\\\label{1.11a}
&\displaystyle\frac{\partial \sigma}{\partial y} (0,t)=0,  \mm   \sigma(\rho(t),t)=\Phi(t), &&     t>0,\\\label{1.13a}
&-\frac{\partial^{2} p}{\partial y^{2}}=\mu(\sigma-\widetilde{\sigma})  && y \in(0, \rho(t)),  \hspace{0.5em}t>0,\\\label{1.14a}
&\displaystyle\frac{\partial p}{\partial y} (0,t)=0, \mm p(\rho(t),t)= 0,  &&    t>0,\\\label{1.15a}
&\frac{d \rho}{d t}=-\frac{\partial p}{\partial y}  && y=\rho(t), \hspace{0.5em} t>0,\\\label{1.17a}
&\rho(0)=\rho_{0}.
\end{align}
The solution of \re{1.10a}--\re{1.14a} satisfies
\begin{align}\label{1.13aa}
&\sigma(y, t)=\Phi(t)\frac{\cosh y}{\cosh(\rho(t))},\\
\label{1.14aa}
&p(y,t)=  \frac{1}{2}\mu \tilde{\sigma} y^2 +\mu\Phi(t)- \frac{1}{2}\mu\tilde{\sigma}\rho^2(t)-\mu \Phi(t)\frac{\cosh y}{\cosh (\rho(t))}.
\end{align}
From \re{1.13aa} and \re{1.14aa}, system \re{1.10a}--\re{1.17a} is reduced to the following system:
\begin{align}\label{1.15}
&\frac{d \rho}{d t}(t)=\mu \rho(t)\left[\Phi(t) \frac{ \tanh( \rho(t))}{\rho(t)}-\widetilde{\sigma}\right],\\ \label{1.155}
&\rho(0)=\rho_{0}.
\end{align}
 Notice that $\rho$ is a solution of \re{1.15}--\re{1.155} if and only if  $(\sigma, p, \rho)$  is a solution of \re{1.10a}--\re{1.17a}, where $\sigma$ and $ p$ are given in \re{1.13aa} and \re{1.14aa}, respectively.

Denote
\begin{align}\nonumber
\overline{\Phi}=\frac{1}{T} \int_{0}^{T} \Phi(t) d t.
\end{align}

Our results about flat solutions are the following theorems.
\begin{thm}\label{thm:1.1a'}
For any initial value $\rho_{0}>0$, \re{1.15}--\re{1.155} has a unique positive global solution $\rho$.
\end{thm}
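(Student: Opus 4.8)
The plan is to analyze the scalar ODE \re{1.15}, which we rewrite as $\rho'(t) = \mu\bigl[\Phi(t)\tanh\rho(t) - \widetilde{\sigma}\,\rho(t)\bigr] =: F(t,\rho)$. The right-hand side $F$ is continuous in $t$ and locally Lipschitz in $\rho$ (since $\tanh$ is smooth and $\Phi$ is continuous and bounded on $[0,T]$, hence bounded on all of $\mathbb{R}$ by periodicity), so the Picard–Lindelöf theorem gives a unique local solution through any $\rho_0>0$. It remains to show this solution stays positive and does not blow up in finite time, so that it extends to all $t>0$.

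First I would establish positivity. Note that $\rho\equiv 0$ is an equilibrium of \re{1.15} (both $\tanh\rho$ and the explicit $\rho$ factor vanish), so by uniqueness no solution starting at $\rho_0>0$ can ever reach $0$; the solution remains strictly positive on its maximal interval of existence. Alternatively, one can observe directly from the form $\rho'(t)=\mu\rho(t)\bigl[\Phi(t)\tanh\rho(t)/\rho(t) - \widetilde{\sigma}\bigr]$ that $\rho(t) = \rho_0 \exp\!\bigl(\mu\int_0^t [\Phi(s)\tanh\rho(s)/\rho(s) - \widetilde{\sigma}]\,ds\bigr)$, which is manifestly positive wherever defined.

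Next I would rule out finite-time blow-up by an a priori upper bound. Since $0<\tanh\rho<1$ for $\rho>0$, we have $F(t,\rho) = \mu\bigl[\Phi(t)\tanh\rho - \widetilde{\sigma}\rho\bigr] \le \mu\bigl[\|\Phi\|_\infty - \widetilde{\sigma}\rho\bigr]$. Thus whenever $\rho(t) > \|\Phi\|_\infty/\widetilde{\sigma}$ we have $\rho'(t)<0$, so $\rho(t) \le \max\{\rho_0,\ \|\Phi\|_\infty/\widetilde{\sigma}\}$ for all $t$ in the maximal interval; in particular $\rho$ is bounded. Combined with the positivity above, the solution stays in a compact subset of $(0,\infty)$, so by the standard continuation theorem for ODEs the maximal interval of existence is $[0,\infty)$, and the unique global positive solution exists.

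The argument is essentially routine; the only mild subtlety is that one must verify $\Phi$ (given only on a period) extends to a bounded continuous function on $\mathbb{R}$ — which is immediate from $T$-periodicity and continuity — and that the upper bound $\|\Phi\|_\infty/\widetilde{\sigma}$ is finite, which uses $\widetilde{\sigma}>0$ (a standing assumption on the model). No genuine obstacle arises; the main point worth recording carefully is the dichotomy that $\rho\equiv0$ is an equilibrium, which simultaneously delivers positivity and sets up the later classification results in Theorems \ref{thm:1.1a'}--\ref{thm:1.2}.
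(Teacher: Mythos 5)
Your proof is correct; the two differences from the paper's argument are cosmetic but worth flagging. For positivity, the paper derives the explicit lower bound $\rho(t)\ge\rho_0\,e^{-\mu\widetilde{\sigma}t}$ from $\Phi(t)\tanh(\rho)/\rho>0$, whereas you invoke uniqueness together with the fact that $\rho\equiv 0$ is an equilibrium; both are standard and equivalent here. For the upper bound, the paper uses $0<\tanh(\rho)/\rho<1$ to get the two-sided differential inequality $-\mu\widetilde{\sigma}\rho\le \rho'\le \mu(\Phi^*-\widetilde{\sigma})\rho$ and concludes $\rho(t)\le\rho_0\,e^{\mu(\Phi^*-\widetilde{\sigma})t}$, which rules out finite-time blow-up for any sign of $\widetilde{\sigma}$; you instead exhibit a trapping region, getting the stronger \emph{uniform} bound $\rho(t)\le\max\{\rho_0,\|\Phi\|_\infty/\widetilde{\sigma}\}$, but as you note this requires $\widetilde{\sigma}>0$ (biologically natural and consistent with the model, but not invoked in the paper's version of the estimate). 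Either route delivers global existence via the continuation principle, so both are valid; the paper's one-line exponential bound is slightly more economical and sign-agnostic, while yours gives boundedness of the orbit, which is extra information the theorem does not ask for.
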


\begin{thm}\label{thm:1.1'}
 $\widetilde{\sigma}\ge\overline{\Phi}$ if and only if the zero solution of system \re{1.15}--\re{1.155} is globally stable.
\end{thm}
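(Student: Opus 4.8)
The plan is to recast \re{1.15} in the form $\rho'(t)=\mu\rho(t)\big[\Phi(t)f(\rho(t))-\widetilde{\sigma}\big]$ with $f(\rho):=\tanh(\rho)/\rho$, which extends continuously to $[0,\infty)$ with $f(0)=1$ and satisfies $0<f(\rho)<1$ for all $\rho>0$ (since $\tanh\rho<\rho$), and to introduce the auxiliary function $\psi(t):=\mu\int_0^t(\Phi(s)-\overline{\Phi})\,ds$. Because $\Phi$ is continuous and $T$-periodic with mean $\overline{\Phi}$, the function $\psi$ is continuous and $T$-periodic, hence bounded, say $|\psi|\le M_0$. Everything will follow from the identity
\[
\frac{d}{dt}\big(\ln\rho(t)-\psi(t)\big)=\mu\big[\Phi(t)\big(f(\rho(t))-1\big)+\overline{\Phi}-\widetilde{\sigma}\big],
\]
valid along any positive global solution $\rho$, whose existence is guaranteed by Theorem \ref{thm:1.1a'}.

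For the sufficiency direction ($\widetilde{\sigma}\ge\overline{\Phi}$ implies global stability of the zero solution) I would proceed as follows. The right-hand side of the identity is $\le 0$, so $\ln\rho(t)-\psi(t)$ is nonincreasing; this alone gives $\rho(t)\le e^{2M_0}\rho_0$ for all $t$, hence Lyapunov stability of the zero solution. For global attraction, when $\widetilde{\sigma}>\overline{\Phi}$ the right-hand side is $\le\mu(\overline{\Phi}-\widetilde{\sigma})<0$, so $\ln\rho(t)-\psi(t)\to-\infty$ and $\rho(t)\to 0$. The delicate case is the equality $\widetilde{\sigma}=\overline{\Phi}$: there the identity only gives $\frac{d}{dt}(\ln\rho-\psi)=\mu\Phi(t)(f(\rho)-1)$, which is strictly negative as long as $\rho>0$, so $\ln\rho-\psi$ decreases strictly to some limit $\ell$, possibly $-\infty$. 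If $\ell=-\infty$ we conclude $\rho(t)\to 0$ as before; if $\ell$ were finite, then $\ln\rho(t)$ would remain in a bounded interval, giving $0<m\le\rho(t)\le M$, whence $f(\rho(t))-1\le\max_{[m,M]}f-1<0$ and the right-hand side would be bounded above by a negative constant, forcing $\ln\rho-\psi\to-\infty$, a contradiction. So $\rho(t)\to 0$ in this case too, and the zero solution is globally asymptotically stable.

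For the necessity direction I would prove the contrapositive: if $\widetilde{\sigma}<\overline{\Phi}$, the zero solution is not globally stable. Using $f(\rho)\to 1$ as $\rho\to 0^+$, fix $\rho_1>0$ so small that $(1-f(\rho))\max_{[0,T]}\Phi\le\tfrac12(\overline{\Phi}-\widetilde{\sigma})$ for all $\rho\in(0,\rho_1]$. I claim every positive solution satisfies $\limsup_{t\to\infty}\rho(t)\ge\rho_1$: otherwise $\rho(t)<\rho_1$ for all large $t$, and on that range the right-hand side of the identity is $\ge\mu\big[\overline{\Phi}-\widetilde{\sigma}-(1-f(\rho))\max\Phi\big]\ge\tfrac{\mu}{2}(\overline{\Phi}-\widetilde{\sigma})>0$, so $\ln\rho(t)-\psi(t)\to+\infty$, hence $\rho(t)\to\infty$, contradicting $\rho(t)<\rho_1$. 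Consequently no positive solution converges to $0$ (and, taking $\rho_0<\rho_1$, small data even escapes past $\rho_1$), so the zero solution is not globally stable.

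I expect the main obstacle to be precisely the borderline case $\widetilde{\sigma}=\overline{\Phi}$ in the sufficiency part: the crude integral bound only yields boundedness of the orbits, and upgrading it to decay requires the strict-monotonicity-plus-limit-dichotomy argument above, in which the compactness of a hypothetical bounded orbit inside $(0,\infty)$ and the strict inequality $f<1$ there do the real work. The flat-shaped geometry only changes the nonlinearity from the $\coth$-type term of the spherical model to $f(\rho)=\tanh\rho/\rho$, but just its features $f(0)=1$ and $0<f<1$ on $(0,\infty)$ are used, so no new structural difficulty arises.
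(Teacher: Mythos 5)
Your proof is correct, but it follows a genuinely different route than the paper's. The paper works directly with period-$T$ comparisons: for $\widetilde{\sigma}>\overline{\Phi}$ it uses the exponential integral form of $\rho$ over $[0,t+nT]$ with the crude bound $\tanh\rho/\rho<1$, and for the borderline case $\widetilde{\sigma}=\overline{\Phi}$ it establishes four separate sublemmas---$\rho(t+T)\le\rho(t)$, a uniform one-period growth bound, $\liminf_{t\to\infty}\rho=0$ by contradiction, and finally $\lim_{t\to\infty}\rho=0$---before proving necessity by yet another contradiction along $\rho\to0$. Your device of subtracting the zero-mean primitive $\psi(t)=\mu\int_0^t(\Phi-\overline{\Phi})$ turns the nonautonomous periodic drift into a bounded additive perturbation of $\ln\rho$, so the sign of $\mu\big[\Phi(f(\rho)-1)+\overline{\Phi}-\widetilde{\sigma}\big]$ alone drives the whole proof: the strict decrease plus the finite-limit dichotomy (bounded orbit in a compact subinterval of $(0,\infty)$ forces a strictly negative derivative bound) settles the equality case in one stroke, and the same identity with a small-$\rho$ lower bound yields the contrapositive of necessity. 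This Lyapunov-type formulation is more unified and also delivers genuine Lyapunov stability via $\rho(t)\le e^{M_0}\rho_0$ (you wrote $e^{2M_0}$; $e^{M_0}$ suffices since $\psi(0)=0$), a point the paper's argument does not isolate. The paper's approach is more elementary and matches the comparison-estimate style used elsewhere in its appendix, but yours is shorter, and the only structural inputs used---$f(0^+)=1$, $0<f<1$, $f$ decreasing---are exactly the ones available, so nothing is lost.
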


\begin{thm}\label{thm:1.2}
If $\widetilde{\sigma}<\overline{\Phi}$, then the following conclusions hold: \\
(i) \re{1.15}--\re{1.155} has a unique positive $T-$periodic  solution $\rho_{\ast}$. \\
(ii)  There exist $\delta>0$ and $C>0$ such that for any the positive solution $\rho$,
\begin{align}
|\rho(t)-\rho_{\ast}(t)|\le C e^{-\delta t}\qquad for ~    t>0. \ \label{1.17}
\end{align}
\end{thm}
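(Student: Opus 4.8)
\emph{Plan.}
The whole statement reduces to the scalar equation \re{1.15}, which we write as $\rho'=\mu\big[\Phi(t)\tanh\rho-\widetilde{\sigma}\rho\big]$; its right-hand side is $C^{\infty}$ in $\rho$ and $T$-periodic in $t$, so by Theorem \ref{thm:1.1a'} solutions exist globally and we may form the Poincar\'e map $P(\rho_{0}):=\rho(T;\rho_{0})$ on $(0,\infty)$, whose fixed points are exactly the initial values of positive $T$-periodic solutions. The structural fact driving the proof is that in the variable $v=\ln\rho$ the equation becomes
\[
\dot v=F(t,v):=\mu\big[\Phi(t)f(e^{v})-\widetilde{\sigma}\big],\qquad f(\rho):=\frac{\tanh\rho}{\rho},
\]
and because $f$ is strictly decreasing on $(0,\infty)$ one has $\partial_{v}F(t,v)=\mu\Phi(t)f'(e^{v})e^{v}<0$ for all $(t,v)$: the transformed equation has right-hand side strictly decreasing in the state. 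This monotonicity will give uniqueness and attraction, while the sign of $\overline{\Phi}-\widetilde{\sigma}$ enters only through existence.

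\emph{Existence of $\rho_{\ast}$.}
Uniqueness for \re{1.15} makes $P$ strictly increasing. Choose $\rho_{1}>0$ with $\big(\max_{[0,T]}\Phi\big)f(\rho_{1})<\widetilde{\sigma}$, possible since $\widetilde{\sigma}<\overline{\Phi}\le\max_{[0,T]}\Phi$ and $f$ maps $(0,\infty)$ onto $(0,1)$. Then $\rho'<0$ whenever $\rho\ge\rho_{1}$, so $(0,\rho_{1}]$ is forward invariant and $P(\rho_{0})\le\rho_{1}<\rho_{0}$ for $\rho_{0}>\rho_{1}$. Conversely, by continuous dependence (and since $\rho\equiv 0$ solves the equation) $\sup_{[0,T]}\rho(t;\rho_{0})\to 0$ as $\rho_{0}\to 0^{+}$, so $f(\rho(t;\rho_{0}))\to 1$ uniformly and $\ln P(\rho_{0})-\ln\rho_{0}=\mu\int_{0}^{T}[\Phi(t)f(\rho(t;\rho_{0}))-\widetilde{\sigma}]\,dt\to\mu T(\overline{\Phi}-\widetilde{\sigma})>0$; hence $P(\rho_{0})>\rho_{0}$ for all small $\rho_{0}>0$. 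So $P(\rho_{0})-\rho_{0}$ changes sign and $P$ has a fixed point $\rho_{0}^{\ast}\in(0,\rho_{1})$ (equivalently, the increasing sequence $P^{n}(\rho_{0})$ started from a small $\rho_{0}$ is bounded by $\rho_{1}$ and converges to one); extending the corresponding solution $T$-periodically yields a positive $T$-periodic solution $\rho_{\ast}$ with $\min_{[0,T]}\rho_{\ast}>0$.

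\emph{Uniqueness.}
Suppose $\rho_{\ast}^{(1)},\rho_{\ast}^{(2)}$ are distinct positive $T$-periodic solutions. Uniqueness for \re{1.15} forbids their graphs from meeting, so after relabeling $\rho_{\ast}^{(1)}(t)<\rho_{\ast}^{(2)}(t)$ for all $t$; with $v_{i}=\ln\rho_{\ast}^{(i)}$ we get $\frac{d}{dt}(v_{2}-v_{1})=F(t,v_{2})-F(t,v_{1})<0$ at every $t$, since $v_{2}>v_{1}$ and $F(t,\cdot)$ is strictly decreasing. But $v_{2}-v_{1}$ is $T$-periodic, so $\int_{0}^{T}\frac{d}{dt}(v_{2}-v_{1})\,dt=0$, a contradiction. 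Hence $\rho_{\ast}$ is unique, giving (i).

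\emph{Global exponential attraction.}
Let $\rho$ be any positive solution and put $v=\ln\rho$, $v_{\ast}=\ln\rho_{\ast}$. If $v(0)=v_{\ast}(0)$ then $\rho\equiv\rho_{\ast}$; otherwise the graphs never meet, $w:=|v-v_{\ast}|>0$ on $[0,\infty)$, and as in the uniqueness step $\dot w=-|F(t,v)-F(t,v_{\ast})|<0$, so $w$ is strictly decreasing. Then $w(t)\le w(0)$ and $v(t)$ lies in the bounded interval $[\min_{[0,T]}v_{\ast}-w(0),\ \max_{[0,T]}v_{\ast}+w(0)]$, on which $\partial_{v}F\le -c_{1}<0$ (because $\Phi$ is bounded below by a positive constant and $f'(e^{v})e^{v}$ is continuous and negative); hence $\dot w=\pm\big(F(t,v)-F(t,v_{\ast})\big)\le -c_{1}w$, so $w(t)\le w(0)e^{-c_{1}t}\to 0$. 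To obtain a rate independent of $\rho_{0}$: once $w(t)\le\ln 2$ and $\rho(t)\le\rho_{1}$ — which holds for $t\ge t_{\ast}$, $t_{\ast}$ depending on $\rho$ — the orbit lies in $[\,\frac12\min_{[0,T]}\rho_{\ast},\ \rho_{1}\,]=:[a,\rho_{1}]$, a compact set on which $\partial_{v}F\le-\delta<0$ with $\delta$ depending only on $\mu,\widetilde{\sigma}$ and $\min\Phi,\max\Phi$; thus $w(t)\le w(t_{\ast})e^{-\delta(t-t_{\ast})}\le w(0)e^{\delta t_{\ast}}e^{-\delta t}$ for $t\ge t_{\ast}$, and trivially $w(t)\le w(0)\le w(0)e^{\delta t_{\ast}}e^{-\delta t}$ for $0\le t\le t_{\ast}$. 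Since $1-e^{-w}\le w$ and $e^{w}-1\le we^{w}$ we get $|\rho(t)-\rho_{\ast}(t)|\le\rho_{1}e^{w(0)}w(t)$, which yields \re{1.17} with this $\delta$ and $C=C(\rho_{0})$, proving (ii). \emph{Main obstacle:} the contraction $\partial_{v}F=\mu\Phi(t)f'(e^{v})e^{v}$ degenerates as $\rho\to 0^{+}$, and — since $\widetilde{\sigma}$ may exceed $\min_{[0,T]}\Phi$ — the flat-shaped model provides no pointwise lower barrier; one therefore only reads off the uniform rate $\delta$ after the orbit has been trapped (over one period rather than pointwise) into the fixed compact region $[a,\rho_{1}]$, and keeping track of these period-type invariances is the chief difference from the constant-$\Phi$ case.
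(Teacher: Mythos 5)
Your proof is correct, and while the core mechanism (moving to the logarithmic variable so that the right-hand side becomes strictly decreasing in the state, then applying the mean value theorem) is the same as the paper's, you package it differently in a way worth noting. For existence, the paper constructs an explicit forward-invariant interval $[\overline{x},x_2]$ with $\overline{x}=(\tanh\rho/\rho)^{-1}(\widetilde{\sigma}/\overline{\Phi})/e^{\mu(\Phi^*-\widetilde{\sigma})T}$ and $x_2=(\tanh\rho/\rho)^{-1}(\widetilde{\sigma}/\Phi^*)$, and then invokes Brouwer's fixed point theorem; you instead show the Poincaré displacement $P(\rho_0)-\rho_0$ changes sign (positive for small $\rho_0$ via $\ln P(\rho_0)-\ln\rho_0\to\mu T(\overline{\Phi}-\widetilde{\sigma})>0$, negative above $\rho_1$), which is a lighter-weight argument needing only the intermediate value theorem and avoids naming the barrier explicitly. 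For uniqueness, the paper deduces it as a corollary of the exponential attraction estimate; your direct argument — two $T$-periodic solutions would make $v_2-v_1$ strictly monotone yet periodic — is shorter and logically independent of part (ii). For the attraction estimate, both proofs use the same substitution ($\rho=\rho_*e^y$ in the paper versus your $w=|\ln\rho-\ln\rho_*|$, which is the same object) and the same MVT-plus-Gr\"onwall contraction; however, the paper's $\delta=\min\{\mu\Phi_*M_{\min}\rho_{\min},\,\mu\Phi_*\overline{M}_{\min}\rho_{\min}e^{y(0)}\}$ depends on the initial datum through $y(0)$, whereas you add a trapping step (the orbit eventually enters the fixed compact band $[\tfrac12\min\rho_*,\rho_1]$) to extract a decay rate $\delta$ that is uniform across all positive solutions, with only the prefactor $C$ solution-dependent. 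That refinement is a genuine, if modest, strengthening and arguably matches the literal quantifier order of the theorem statement better than the paper's own proof does.
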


Theorem \ref{thm:1.1'} demonstrates that when the mean value of external nutrients is inadequate for tumor cell proliferation, all flat-shaped tumors will vanish. On the other hand, Theorem \ref{thm:1.2} shows that if the external nutrient is sufficient, all flat-shaped tumors grow into a $T-$periodic state. These two theorems give an insight into the relationship between external nutrients and tumor growth behavior.

From Theorem \ref{thm:1.2}, we get the following corollary.
\begin{cor}\label{cor:1.3333}
 If $\widetilde{\sigma}<\overline{\Phi}$, then there exists a unique positive $T-$periodic solution $(\sigma_{*}, p_{*}, \rho_{*})$ of \re{1.10a}--\re{1.17a}, where $\rho_{*}$ is the unique positive $T-$periodic solution of \re{1.15}--\re{1.155} and $\sigma_{*}$ and $p_{*}$ are given by \eqref{1.13aa} and \eqref{1.14aa} with $\rho=\rho_{*}$, respectively.
\end{cor}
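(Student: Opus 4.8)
The plan is to read the corollary off Theorem~\ref{thm:1.2} by means of the reduction already recorded before the statement, namely that $\rho$ solves \re{1.15}--\re{1.155} if and only if $(\sigma,p,\rho)$ solves \re{1.10a}--\re{1.17a}, with $\sigma$ and $p$ then necessarily given by \eqref{1.13aa} and \eqref{1.14aa}. The content of that reduction is that, for a prescribed positive function $\rho(t)$, the subsystem \re{1.10a}--\re{1.14a} is at each frozen time $t$ a pair of linear two-point boundary value problems in $y$ on $(0,\rho(t))$, whose unique solutions are exactly \eqref{1.13aa}--\eqref{1.14aa}; hence the elliptic part introduces no further solutions, and \eqref{1.13aa}--\eqref{1.14aa} set up a bijection between positive solutions of the scalar problem \re{1.15}--\re{1.155} and positive solutions of the free-boundary system \re{1.10a}--\re{1.17a}.

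For existence, let $\rho_\ast$ be the unique positive $T$-periodic solution of \re{1.15}--\re{1.155} provided by Theorem~\ref{thm:1.2}(i), and define $\sigma_\ast(y,t)$, $p_\ast(y,t)$ by \eqref{1.13aa}--\eqref{1.14aa} with $\rho=\rho_\ast$. By the bijection, $(\sigma_\ast,p_\ast,\rho_\ast)$ solves \re{1.10a}--\re{1.17a} (with $\rho_0=\rho_\ast(0)$). It is $T$-periodic because $\Phi$ is $T$-periodic by hypothesis, $\rho_\ast$ is $T$-periodic, and the right-hand sides of \eqref{1.13aa}--\eqref{1.14aa} depend on $t$ only through $\Phi(t)$ and $\rho_\ast(t)$; thus $\sigma_\ast(y,t+T)=\sigma_\ast(y,t)$ and $p_\ast(y,t+T)=p_\ast(y,t)$. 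Positivity holds in the relevant sense: $\rho_\ast(t)>0$, and $\sigma_\ast(y,t)=\Phi(t)\,\cosh y/\cosh(\rho_\ast(t))>0$ since $\Phi(t)>0$ and $\cosh$ is positive.

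For uniqueness, let $(\sigma,p,\rho)$ be any positive $T$-periodic solution of \re{1.10a}--\re{1.17a}. By the bijection, $\rho$ is a positive $T$-periodic solution of \re{1.15}--\re{1.155}, so $\rho=\rho_\ast$ by the uniqueness clause of Theorem~\ref{thm:1.2}(i); and then $\sigma$, $p$ must agree with the unique solutions \eqref{1.13aa}--\eqref{1.14aa} of \re{1.10a}--\re{1.14a} for $\rho=\rho_\ast$, i.e.\ $\sigma=\sigma_\ast$ and $p=p_\ast$. This proves the corollary. There is no real obstacle here: the only point deserving an explicit word, which is the one singled out above, is why the elliptic subproblem contributes no additional degrees of freedom, so that periodicity and uniqueness transfer verbatim from $\rho_\ast$ to the full triple.
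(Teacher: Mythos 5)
Your proof is correct and takes essentially the same approach as the paper, which gives no explicit argument for this corollary but simply invokes Theorem~\ref{thm:1.2} together with the stated equivalence that $\rho$ solves \re{1.15}--\re{1.155} if and only if $(\sigma,p,\rho)$ solves \re{1.10a}--\re{1.17a} with $\sigma,p$ given by \eqref{1.13aa}--\eqref{1.14aa}. You have merely spelled out the routine bookkeeping (periodicity of $\sigma_*,p_*$ inherited from $\Phi$ and $\rho_*$, and uniqueness transferred through the bijection), which is exactly what the paper's one-line reduction leaves implicit.
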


\void{\begin{rem} The theorems \ref{thm:1.1a'} $-$ \ref{thm:1.2} are the basis for the study of the branching solutions below and are not the main results. In this article, we will focus on branching solutions, but for the completeness of the proof, we will include the proof of the theorem in the appendix.
\end{rem}}

We shall derive the non-flat periodic bifurcations from the flat periodic solution $\rho_{*}$.
Regarding symmetry-breaking bifurcation for free boundary problems,
all the obtained results so far describe the existence of stationary branches stemming from the radially symmetric stationary solutions or flat stationary solutions (see \cite{Wang2017,Escher2011,WZ2,Wu2015,Cui2007,CE2,Zhang2009,Hao1,Li2017,Friedman2007a,Pan2018a,Wang2014,Huang2017,crandall,Friedman2006,  w, Cui2018,Friedman2007b, 2008Bifurcation}).
 In 2008, Friedman and  Hu \cite{MR2415075} investigated
the classical Byrne-Chaplain tumor model and got
the existence of axisymmetric stationary branches bifurcating from the radially symmetric stationary solutions for $\mu_n$ with even $n$.
Recently, Pan and Xing \cite{MR4524874}
gave
many stationary branches of non-radially symmetric solutions
at $\mu=\mu_n (n=2,3, \cdots)$.

For the \textit{linearized problem} of the classical Byrne-Chaplain free boundary tumor model, Friedman and  Hu \cite{MR2415075} established the existence of periodic solutions  through extensive and intricate calculations.
{In this paper,} we study the quasi-steady state problem of \eqref{1.1}--\eqref{1.8} and derive periodic solutions bifurcating from the flat periodic solutions using Crandall-Rabinowitz theorem.
This is the first result about the existence of the branches of periodic solutions for a free boundary tumor model.

 Denote
\begin{equation}\label{1}
  \gamma_{j}(\rho_*)=\frac{ \mu k_1({j,\rho_*})}{  \; k_2({j,\rho_*})}, \mm j>j_0,
\end{equation}
 where
\begin{align}\label{2}
 k_1(j) & = \int_{0}^{T}\Phi(t)\Bigg\{1-\frac{\tanh (\rho_*(t))}{\rho_*(t)}
 -\tanh (\rho_*(t))\cdot \Big[\sqrt{1+j}\tanh(\sqrt{1+j}\rho_*(t) )\\\nonumber
 &\mm-
 \sqrt{j}\tanh(\sqrt{j}\rho_*(t))\Big]\Bigg\}d t,\\\label{3}
  k_2(j) & = \int_{0}^{T} \frac12  j^{3/2} \tanh(\sqrt{j}\rho_*(t)) d t.
\end{align}
It is also derived in \cite[section 4]{CE1},
\begin{equation}\nonumber
    \frac{\partial^{2} }{\partial x^{2}}[\sqrt{x}\tanh (\sqrt{x}\rho)]
=\dis\frac{\rho}{2}\frac{\partial}{\partial x}\;\frac{\sinh(\sqrt{x}\rho)\cosh(\sqrt{x}\rho)+\sqrt{x}\rho}{(\rho\sqrt{x})\cosh^{2}(\sqrt{x}\rho)}<0,
\end{equation}
for $x>0$ and $\rho>0$. Then $k_1(j)$ is strictly increasing in $j$. Since
\begin{eqnarray*}
&&
\frac{d}{d \rho}\frac{\tanh \rho}{\rho}=\frac{1}{ \rho}\Big(1-\frac{\tanh \rho}{\rho}-\tanh^{2}\rho\Big)=\displaystyle\frac{\rho-\sinh \rho\cosh \rho}{\rho^2\cosh^{2} \rho}<0,\hspace{2em} \rho>0,
\end{eqnarray*}
it follows
$$
k_1(0)\!=\!\!\int_{0}^{T}\!\!\Phi(t)\Big[1- \frac{\tanh (\rho_*(t))}{\rho_*(t)}- \tanh^{2}(\rho_*(t))\Big]d t<0,
\lim_{j\rightarrow+\infty} \!\!\!k_1(j)\!=\!\!\int_{0}^{T}\!\!\Phi(t)\Big[1-\frac{\tanh (\rho_*(t))}{\rho_*(t)}\Big]d t>0.
$$
Hence there exists a unique $j_{0} >0$ (not necessarily an integer) such that
\bea\nonumber
k_1(j)<0 \m\text{ for }  0\le j < j_{0}, \mm k_1(j_0)=0, \mm
k_1(j)>0 \m\text{ for }   j > j_{0}.
\eea
Hence, $ \gamma_{j}$ is positive for $j > j_{0}$.

Our main result is the following theorem.
\begin{thm}\label{result1}(Symmetry-Breaking)
Assume that $ \Phi\in C^{1+\frac{\alpha}{3}}{([0,\infty))}$ is a positive T--periodic function and  $j$ ($j > j_0$) is a sum of the squares of two non-negative
 integers.
 Then $\gamma=\gamma_{j}$ is a bifurcation value of system \eqref{1.10a}--\eqref{1.17a}. More precisely,
there exist at least $\beta$ bifurcation branches of non-flat periodic
solutions bifurcating from $\gamma=\gamma_{j}$  with free boundary
$$y=\rho_*(t) + \epsilon S_{n^2+m^2,\gamma_{j}}(t)\cos\left(n x_1\right)\cos\left(m x_2\right) + O(\epsilon^2).$$
If there is other $i$ such that $\gamma_i = \gamma_j$, then $\beta=\alpha_{\max\{i,j\}}$ and $ \max\{i,j\}=n^2+m^2$;
otherwise, $\beta=\alpha_j$ and  $ j=n^2+m^2$, where $
     \alpha_j=\text{ the number of } \{(n,m)\,:\,  j=n^2+m^2, n\geq m\}
$.
Here
$S_{j,\gamma_{j}}(t)=e^{-\int_{0}^{t} A_{j,\gamma_{j}}(s)ds}$, where
\begin{align}\label{Aj}
A_{j,\gamma}(t)&=\mu\widetilde{\sigma}-\mu \Phi(t)  +\mu\Phi(t)\tanh(\rho_*(t))\sqrt{1+j}\tanh(\sqrt{1+j}\rho_*(t))\\\nonumber
&\mm-\mu \widetilde{\sigma}\rho_*(t)\sqrt{j} \tanh (\sqrt{j}\rho_*(t))+\frac{\gamma j^{\frac{3}{2}}}{2}\tanh (\sqrt{j}\rho_*(t)).
\end{align}

If $j=n^2$, $\min\{i,j\}=n^2$ or $\max\{i,j\}=n^2$, there is an additional branch with
the free boundary
\begin{align*}
y=\rho_* + \epsilon S_{n^2,\gamma_{n^2}}(t)[\cos(nx_1)+\cos(nx_2)] + O(\epsilon^2).
\end{align*}
where $n=\sqrt{\max\{i,j\}}$ if both $\max\{i,j\}$ and $\min\{i,j\}$ are perfect square numbers.
\end{thm}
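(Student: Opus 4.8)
The plan is to cast the free-boundary problem \eqref{1.10a}--\eqref{1.17a} as an abstract bifurcation equation $\mathcal{F}(\gamma,\xi)=0$ in suitable periodic H\"older spaces, where $\xi$ measures the deviation of the free boundary from the flat periodic solution $\rho_*(t)$, and then verify the hypotheses of the Crandall--Rabinowitz theorem at $\gamma=\gamma_j$. First I would set $y=\rho_*(t)+\xi(x_1,x_2,t)$, flatten the domain by the change of variables $y\mapsto y/(\rho_*(t)+\xi)$, and solve \eqref{1.1}--\eqref{1.6} (with $\lambda=0$) for $\sigma$ and $p$ as functions of $\xi$ near $\xi=0$; elliptic regularity together with the explicit flat solutions \eqref{1.13aa}--\eqref{1.14aa} provides the needed smoothness in $\xi$. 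Substituting into the kinematic condition \eqref{1.8} yields an evolution equation for $\xi$ whose $T$-periodic solutions are the object of interest, so $\mathcal{F}$ maps a neighbourhood of $(\gamma_j,0)$ in $\mathbb{R}\times X$ into $Y$, with $X,Y$ spaces of $T$-periodic, spatially-$2\pi$-periodic H\"older functions of the prescribed regularity $C^{1+\alpha/3}$ in $t$.

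The second step is to compute the Fr\'echet derivative $\mathcal{F}_\xi(\gamma,0)$ by linearizing around the flat periodic solution. Expanding in the spatial Fourier modes $\cos(nx_1)\cos(mx_2)$ decouples the linearization mode-by-mode: on the mode indexed by $j=n^2+m^2$ one obtains, after the computations already recorded in the paper leading to \eqref{Aj}, a scalar linear ODE of the form $\dot u + A_{j,\gamma}(t)u = (\text{source involving }\gamma - \gamma_j)$. The homogeneous $T$-periodic solvability of this ODE is controlled by the Floquet multiplier $\exp(-\int_0^T A_{j,\gamma}(s)\,ds)$; using the formula for $A_{j,\gamma}$ one sees that $\int_0^T A_{j,\gamma}(s)\,ds = \mu k_1(j) - \gamma\, k_2(j)$ up to the bookkeeping of signs, so this integral vanishes precisely when $\gamma=\gamma_j=\mu k_1(j)/k_2(j)$. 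Hence for $j>j_0$ (where $\gamma_j>0$) the kernel of $\mathcal{F}_\xi(\gamma_j,0)$ is spanned exactly by the functions $S_{j,\gamma_j}(t)\cos(nx_1)\cos(mx_2)$ over all representations $j=n^2+m^2$ with $n\ge m$ — giving dimension $\alpha_j$ — plus, when $j$ is a perfect square $n^2$, the extra element $S_{n^2,\gamma_{n^2}}(t)[\cos(nx_1)+\cos(nx_2)]$; the codimension of the range is likewise finite and equal to the kernel dimension because the obstruction to solving the scalar periodic ODE on each resonant mode is one-dimensional. The possibility $\gamma_i=\gamma_j$ for $i\ne j$ simply merges two resonant mode-groups, which is why $\beta=\alpha_{\max\{i,j\}}$ in that case.

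The third step is the transversality condition: I would compute the mixed derivative $\mathcal{F}_{\gamma\xi}(\gamma_j,0)$ applied to each kernel element and check it does not lie in the range of $\mathcal{F}_\xi(\gamma_j,0)$. On the resonant mode this reduces to checking that $\frac{d}{d\gamma}\big(\mu k_1(j)-\gamma k_2(j)\big)\big|_{\gamma=\gamma_j} = -k_2(j)\ne 0$, which holds since $k_2(j)=\int_0^T \tfrac12 j^{3/2}\tanh(\sqrt{j}\rho_*(t))\,dt>0$ for $j>0$; equivalently, the Floquet multiplier crosses $1$ with nonzero speed as $\gamma$ varies. With one-dimensional kernel and cokernel and this transversality, Crandall--Rabinowitz produces a branch of nontrivial periodic solutions through each simple kernel direction; applying it separately to each of the $\beta$ independent kernel directions (and to the extra perfect-square direction) yields the asserted $\beta$ branches and the additional one, with leading-order free boundary $\rho_*(t)+\epsilon S_{j,\gamma_j}(t)\cos(nx_1)\cos(mx_2)+O(\epsilon^2)$ read off from the kernel element.

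The main obstacle I anticipate is not the bifurcation-theoretic machinery but the analytic setup that makes it applicable: one must choose the function spaces so that (a) the map $\xi\mapsto(\sigma,p)$ from the elliptic subproblems is $C^\infty$ (or at least $C^2$) with the correct loss/gain of derivatives — the Dirichlet condition $p|_\Gamma=\gamma\kappa$ involves the mean curvature, hence second derivatives of $\xi$, so the kinematic equation is a nonlocal, first-order-in-time but effectively third-order-in-space evolution law and the spaces must be calibrated accordingly — and (b) the linearized operator $\mathcal{F}_\xi(\gamma_j,0)$ is Fredholm of index zero between these spaces, which requires controlling the full (infinite) family of non-resonant modes $i\ne j$ uniformly and showing the corresponding scalar periodic ODEs are boundedly invertible, i.e.\ that $\mu k_1(i)-\gamma_j k_2(i)\ne 0$ stays bounded away from zero as $i\to\infty$. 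Establishing these mapping and Fredholm properties, together with the regularity $C^{1+\alpha/3}$ in $t$ inherited from $\Phi$, is where the flat-shaped geometry forces computations genuinely different from the sphere-shaped case of \cite{he,Bai2013}, and is where I would expect the bulk of the technical work to lie.
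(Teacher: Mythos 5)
Your overall architecture matches the paper's: flatten the domain, formulate an abstract periodic evolution equation $G(\tilde\rho,\gamma)=0$, linearize, expand in Fourier modes $\cos(nx_1)\cos(mx_2)$, identify the resonance condition $\int_0^T A_{j,\gamma}\,ds=0 \Leftrightarrow \gamma=\gamma_j$, and apply Crandall--Rabinowitz with the transversality condition coming from $k_2(j)>0$. All of that is correct and is what the paper does.

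However, there is a genuine gap at the heart of the argument. You acknowledge that the kernel of $\mathcal{F}_\xi(\gamma_j,0)$ is generically \emph{multi}-dimensional: it has dimension $\alpha_j$ from the various sum-of-squares representations $j=n^2+m^2$, an extra dimension when $j=n^2$, and further inflation when $\gamma_i=\gamma_j$ for some $i\neq j$. You then say that Crandall--Rabinowitz "produces a branch through each simple kernel direction" and that one "applies it separately to each of the $\beta$ independent kernel directions." That step does not follow. The Crandall--Rabinowitz theorem (Theorem~\ref{thmcr} in the paper) requires as a \emph{hypothesis} that $\mathrm{Ker}\,G_x(0,\gamma_0)$ be one-dimensional and the range have codimension one; one cannot apply it "direction by direction" inside a higher-dimensional kernel, because the bifurcation equation couples all kernel directions through the Lyapunov--Schmidt reduction. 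What the paper actually does — and what your proposal omits — is to first prove (Lemmas~\ref{prop}--\ref{5.7}) that $G(\cdot,\gamma)$ preserves the periodicity and reflection symmetries \eqref{5.5a}--\eqref{5.5}, and then to \emph{restrict} $G$ to the invariant subspaces $(Y\cap E)_{n,m}$, $(Y\cap E)_{n,0}$, $(Y\cap E)_{n,+}$. On each such subspace the kernel collapses to exactly one dimension, spanned by $S_{j,\gamma_j}(t)\cos(nx_1)\cos(mx_2)$ (or $S_{j,\gamma_j}(t)[\cos(nx_1)+\cos(nx_2)]$), so Crandall--Rabinowitz applies legitimately and yields a distinct branch for each $(n,m)$. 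The case distinction in the count $\beta$ (whether $\gamma_i=\gamma_j$, and whether $j_2=k^2 j_1$, as in Theorem~\ref{thm6.2}) also emerges from which kernel elements survive in which invariant subspace; your reasoning that coincidences "simply merge two resonant mode-groups" glosses over this and would not by itself yield the stated count. Without the symmetry-restriction step, the proof is incomplete.

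One more minor point: to justify the Lyapunov--Schmidt decomposition $W=\mathrm{Ker}\oplus\mathrm{Im}$ (needed for codimension-one range), the paper proves Fredholmness of index zero via semigroup theory (Lemma~\ref{lem2.8}, using \cite{Kielh2012Bifurcation} and \cite{Amann1995}) rather than by uniformly controlling the non-resonant modes as you propose; both are viable, but the semigroup route avoids having to show separately that $\mu k_1(i)-\gamma_j k_2(i)$ stays bounded away from zero as $i\to\infty$, which in fact follows from \eqref{2.49} but would require an argument you have not supplied.
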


\begin{rem}
Note that $S_{j,\gamma_{j}}(t)\cos(mx_1)\cos(nx_2)$ is a $\frac{\pi}{2}$ rotation of $S_{j,\gamma_{j}}(t)\cos(nx_1)\cos(mx_2)$ in the $(x_1,x_2)$ plane.
Also, $S_{j,\gamma_{j}}(t)[\cos(nx_1)-\cos(nx_2)]$
is a translation and symmetry of $S_{j,\gamma_{j}}(t)
   [\cos($\\ $  nx_1)+\cos(nx_2)]$ in the $x_2$-direction.
Rotated, translated and symmetric solutions are not new solutions, so they do not contribute to the number of branches.
\end{rem}

\begin{rem}
Under the orthogonal transformation, $S_{j,\gamma_{j}}(t)[\cos(nx_1)+\cos(nx_2)]=2S_{j,\gamma_{j}}(t)\cos($\\$\frac{n}{\sqrt{2}}y_1)\cos(\frac{n}{\sqrt{2}}y_2)$.
Since $j= \frac{n^2}{2}+ \frac{n^2}{2} $ is not a sum of squares of two integers,
$S_{j,\gamma_{j}}(t)[\cos(nx_1)+\cos(nx_2)]$ is a new and supplementary branch of $j=n^2$.
\end{rem}

In Lemmas \ref{lem6.2} and \ref{lem5.3}, it is shown that $\gamma_j$ either strictly {decreases} in $j$ or exhibits a non-monotonic pattern of first {increasing}  and then {decreasing}. As a consequence, it is possible that some $\gamma_j$  are not distinct from each other. Additionally, for a given $j$, there are multiple pairs of sum-of-squares decomposition. It is worth observing that if $(n,m)$ is a square decomposition of $j$, then $(m,n)$ is also one of $j$.
These observations imply that the dimension of the kernel space of the linearized operator may not be one, which { induces} complexity to the analysis of periodic bifurcations.
We follow the idea of \cite{hx2023} and leverage the properties of periodicity and symmetry to overcome these difficulties and give periodic bifurcations for all positive bifurcation values.

 The paper is organized as follows. We give
 the results of bifurcation in Section 2.
 In Appendix A.1, we study flat solutions and give the necessary and sufficient conditions for the global stability of zero equilibrium solution. We show the existence, uniqueness and stability of the positive flat periodic solution in Appendix A.2.

\section{The results of bifurcation}

In this section, we apply Crandall-Rabinowitz theorem
to establish the existence of the $T-$periodic bifurcation from the positive $T-$periodic solution $\rho_*$.

Now we recall Crandall-Rabinowitz theorem.
\begin{thm}{\bf (Crandall-Rabinowitz theorem \cite{crandall})}\label{thmcr}
Let $X$, $Y$ be real Banach spaces and  $G(\cdot,\cdot)$ be a $C^p$ $(p\ge 2)$ mapping of a neighborhood $(0,\gamma_0)$ in $X \times \mathbb{R}$ into $Y$. Suppose
\begin{itemize}
\item[(1)] $G(0,\gamma) = 0$ for all $\gamma$ in a neighborhood of $\gamma_0$,
\item[(2)] $\mathrm{Ker} \,G_x(0,\gamma_0)$ is one dimensional space, spanned by $x_0$,
\item[(3)] $\mathrm{Im} \,G_x(0,\gamma_0)=Y_1$ has codimension 1,
\item[(4)] $G_{\gamma x}(0,\gamma_0) x_0 \notin Y_1$.
\end{itemize}
Then $(0,\gamma_0)$ is a bifurcation point of the equation $G(x,\gamma)=0$ in the following sense: In a neighborhood of $(0,\gamma_0)$ the set of solutions $G(x,\gamma) =0$ consists of two  $C^{p-1}$ smooth curves $\Gamma_1$ and $\Gamma_2$ which intersect only at the point $(0,\gamma_0)$; $\Gamma_1$ is the curve $(0,\gamma)$ and $\Gamma_2$ can be parameterized as follows:
$$\Gamma_2: (x(\epsilon),\gamma(\epsilon)), |\epsilon| \text{ small, } (x(0),\gamma(0))=(0,\gamma_0),\; x'(0)=x_0.$$
\end{thm}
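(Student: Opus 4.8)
The plan is to prove this by the classical Lyapunov--Schmidt reduction followed by two applications of the implicit function theorem. Write $L:=G_x(0,\gamma_0)$. The first step is to set up the splittings adapted to $L$: since $\ker L=\mathbb{R}x_0$ is finite dimensional it is complemented, $X=\mathbb{R}x_0\oplus Z$ with $Z$ closed; and since $Y_1=\mathrm{Range}\,L$ has finite codimension it is automatically closed (adjoin a finite basis of an algebraic complement to turn $L$ into a bounded surjection of $X\times\mathbb{R}^n$ onto $Y$, then invoke the open mapping theorem), so $Y=Y_1\oplus W$ with $W$ one dimensional. Let $P:X\to\mathbb{R}x_0$ and $Q:Y\to W$ be the associated projections; then $L|_Z:Z\to Y_1$ is a bounded bijection, hence an isomorphism, so $L$ is Fredholm of index $0$.

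The reduction itself: write $x=sx_0+z$ with $s\in\mathbb{R}$, $z\in Z$, and split $G(x,\gamma)=0$ into the auxiliary equation $(I-Q)G(sx_0+z,\gamma)=0$ and the bifurcation equation $QG(sx_0+z,\gamma)=0$. The auxiliary map has invertible $z$--derivative $L|_Z$ at the origin, so the implicit function theorem solves it as $z=\varphi(s,\gamma)$, a $C^p$ function with $\varphi(0,\gamma_0)=0$; hypothesis (1) together with uniqueness forces $\varphi(0,\gamma)\equiv0$, so $\varphi_\gamma(0,\gamma_0)=0$, and differentiating the auxiliary identity in $s$ at $(0,\gamma_0)$ (using $Lx_0=0$, $Y_1\cap W=\{0\}$ and $(I-Q)|_{Y_1}=\mathrm{id}$) gives $L\varphi_s(0,\gamma_0)=0$, hence $\varphi_s(0,\gamma_0)\in\ker L\cap Z=\{0\}$. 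What is left is the scalar bifurcation equation $\Phi(s,\gamma):=QG(sx_0+\varphi(s,\gamma),\gamma)=0$, valued in $W\cong\mathbb{R}$; since $\Phi(0,\gamma)\equiv0$ (because $G(0,\gamma)\equiv0$ and $\varphi(0,\gamma)\equiv0$) the trivial branch $\Gamma_1=(0,\gamma)$ is accounted for, and Hadamard's lemma gives $\Phi(s,\gamma)=s\,\Psi(s,\gamma)$ with $\Psi\in C^{p-1}$ and $\Psi(0,\gamma)=\Phi_s(0,\gamma)$.

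The final step is to solve $\Psi(s,\gamma)=0$ near $(0,\gamma_0)$ by the implicit function theorem one more time, and this is where hypothesis (4) is used. One checks $\Psi(0,\gamma_0)=\Phi_s(0,\gamma_0)=Q[L(x_0+\varphi_s(0,\gamma_0))]=Q[Lx_0]=0$, and then --- differentiating $\Phi_s(s,\gamma)=Q[G_x(sx_0+\varphi,\gamma)(x_0+\varphi_s)]$ in $\gamma$ at $(0,\gamma_0)$, where the vanishing of $\varphi,\varphi_s,\varphi_\gamma$ together with $QL=0$ on $Y_1$ and equality of the mixed second derivatives kills all but one term ---
\[
\Psi_\gamma(0,\gamma_0)=\Phi_{s\gamma}(0,\gamma_0)=Q\big[G_{\gamma x}(0,\gamma_0)x_0\big]\neq0
\]
precisely because $G_{\gamma x}(0,\gamma_0)x_0\notin Y_1$. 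So, with $W$ identified with $\mathbb{R}$, the implicit function theorem yields a unique $C^{p-1}$ curve $\gamma=\gamma(s)$ with $\gamma(0)=\gamma_0$ and $\Psi(s,\gamma(s))=0$, and $\Gamma_2:=\{(\,sx_0+\varphi(s,\gamma(s)),\,\gamma(s)\,)\}$ is the nontrivial branch. It then remains only to verify the bookkeeping stated in the theorem: $\Gamma_2$ is $C^{p-1}$ with $x'(0)=x_0+\varphi_s(0,\gamma_0)+\gamma'(0)\varphi_\gamma(0,\gamma_0)=x_0$; $\Gamma_2$ meets $\Gamma_1$ only at $(0,\gamma_0)$, since $sx_0+\varphi\in\mathbb{R}x_0\oplus Z$ vanishes only for $s=0$; and the solution set near $(0,\gamma_0)$ is exactly $\Gamma_1\cup\Gamma_2$ because the reduced system is $\{z=\varphi(s,\gamma)\}\cap\{s=0\ \text{or}\ \Psi(s,\gamma)=0\}$. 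The one genuinely substantive point in the argument is the passage $\Phi=s\Psi$: dividing out $s$ is what both exhibits the two branches and costs exactly one derivative, explaining the $C^{p-1}$ regularity of the conclusion; checking $\Psi_\gamma(0,\gamma_0)\neq0$ through hypothesis (4) is the other load-bearing step.
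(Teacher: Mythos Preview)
Your proof is correct and follows the classical Lyapunov--Schmidt route that goes back to Crandall and Rabinowitz themselves. Note, however, that the paper does not give its own proof of this statement: Theorem~\ref{thmcr} is simply quoted from \cite{crandall} as a tool, with no argument supplied. So there is no ``paper's proof'' to compare against; your write-up is essentially the original one, and the key steps --- the closed-range argument for $Y_1$, the reduction to the scalar bifurcation equation $\Phi(s,\gamma)=0$, the factorization $\Phi=s\Psi$ via Hadamard's lemma (costing one derivative), and the verification $\Psi_\gamma(0,\gamma_0)=Q[G_{\gamma x}(0,\gamma_0)x_0]\neq 0$ from hypothesis~(4) --- are all handled correctly.
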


To establish the smoothness of the corresponding non-linear mapping $ G(\tilde{\rho}, \gamma)$  and demonstrate that its derivative $  G_{\tilde{\rho}}(0,\gamma)$ is a Fredholm operator with index zero, we reformulate the free boundary problem as a fixed boundary problem. This reformulation offers significant advantages for theoretical analysis. However, directly computing the derivative of the nonlinear mapping from the free boundary is comparatively simpler. Hence, we establish three equivalent problems in Section 2.1. In Section 2.2, we show that the problem maintains periodicity and symmetry. In Section 2.3, we provide the results of bifurcation.

\subsection{Establishing Equivalent Problems
}\label{2.1aaaa}

Following ideas of \cite{2009Well},
through the Hanzawa transformation,  we transform \eqref{1.1}--\eqref{1.8} with $\lambda=0$ into a problem in the fixed domain $\Omega:=(0, 2\pi)\times  (0, 2\pi) \times(0,1)$.

Now, we recall the results in \cite[\S 2]{2009Well}. In \cite{2009Well}, the problem was studied in the case independent of \(x_2\) (i.e., the two-dimensional tumor problem). Their results can be extended similarly to the case of \(n = 3\).

The little H\"{o}lder spaces $h^{k+\alpha}(\overline{\Omega})$ ($k\in\mathbb{N}$ and $\alpha\in(0,1)$) is the closure of $C^\infty(\overline{\Omega})$ in $C^{k+\alpha}(\overline{\Omega})$.
Denote
$D=(0, 2\pi)\times  (0, 2\pi) $.

We study $2\pi$-periodic functions in the spatial direction. Define
\begin{align*}
  &h^{k+\alpha}_{2\pi}\left(\bar{D}\right):=\{\hat{\rho}\in  h^{k+\alpha}(\mathbb{R}^2) ~|~ \hat{\rho}(x_1+2\pi,x_2 )=\hat{\rho}(x_1,x_2)   , ~~ \hat{\rho}(x_1,x_2+2\pi)=  \hat{\rho}(x_1,x_2) \}.
\end{align*}
Let $ h^{4+\alpha}_+\left({\bar{D}}\right):=\{\hat{\rho}\in h^{4+\alpha}_{2\pi}\left(\bar{D}\right) ~|~ \hat{\rho}(x_1,x_2) >0\}.$

Given $\hat{\rho} \in h^{4+\alpha}_+\left({\bar{D}}\right)$, define
$$
\theta_{\hat{\rho}}: \bar{\Omega} \rightarrow \bar{\Omega}_{\hat{\rho}}, \quad\left(x_1^{\prime}, x_2^{\prime}, y^{\prime}\right) \mapsto \left(x_1, x_2, y\right)=\left(x_1^{\prime}, x_2^{\prime}, y^{\prime}   \hat{\rho}\left(x_1^{\prime}, x_2^{\prime}\right) \right).
$$
Then
$$
\theta_{\hat{\rho}}^{-1}:   \bar{\Omega}_{\hat{\rho}} \rightarrow   \bar{\Omega}, \quad  \left(x_1, x_2, y\right)  \mapsto \left(x_1^{\prime}, x_2^{\prime}, y^{\prime}\right) =\left(x_1, x_2, \frac{y}{   \hat{\rho}\left(x_1, x_2 \right) } \right).
$$

This diffeomorphism $\theta_{\hat{\rho}}$ induces the following pull-back and push-forward operators:
$$
\theta_{\hat{\rho}}^* u:=u \circ \theta_{\hat{\rho}},\quad u \in C\left(\bar{\Omega}_{\hat{\rho}}\right) \quad \text { and } \quad \theta_*^{\hat{\rho}} v:=v \circ \theta_{\hat{\rho}}^{-1}, \quad v \in C(\bar{\Omega}).
$$
 Let
$$
  {A}(\hat{\rho}) v:=-\theta_{\hat{\rho}}^* \Delta\left(\theta_*^{\hat{\rho}} v\right), \quad   {B}(\hat{\rho}) v:=\theta_{\hat{\rho}}^*\left(\operatorname{tr} \nabla\left(\theta_*^{\hat{\rho}} v\right) , \bm{v}_{\hat{\rho}}\right), \qquad \text{for  } v \in C^2(\bar{\Omega}),
$$
where $\operatorname{tr}$ is the trace operator with respect to $\Gamma_{\hat{\rho}}$ and $\bm{v}_{\hat{\rho}}=\left(-\nabla\hat{\rho}, 1\right)$ is the outer normal of $\Gamma_{\hat{\rho}}$.
Denote
\begin{align}\label{t1}
u(t):=\theta_{\rho(t)}^* \sigma( \cdot,t) \quad \text { and }  \quad v(t):=\theta_{\rho(t)}^* p( \cdot,t).
\end{align}
Since
$$
V_n = \frac{\rho_t}{\sqrt{1 + |\nabla \rho|^2}} \text{ and }  n= \frac{(-\nabla \rho, 1)}{\sqrt{1 + |\nabla \rho|^2}},$$
then  \eqref{1.1}--\eqref{1.8} with $\lambda=0$
is equivalent to the following problem in the fixed domain $\Omega={D} \times(0,1)$:
\begin{equation}\label{fq2.1}
\left\{\begin{aligned}
  &{A}(\rho(t)) u  + u=0 & & \text { in }(0, T) \times \Omega, \\
  &\frac{\partial u}{\partial y}  =0 & & \text { on }(0, T) \times D\times \{0\}, \\
  &u =\Phi(t)& & \text { on }(0, T) \times D\times \{1\}, \\
&  {A}(\rho(t)) v  =\mu(u-\tilde{\sigma}) & & \text { in }(0, T) \times \Omega, \\
&\frac{\partial v}{\partial y}=0 & & \text { on }(0, T) \times D\times \{0\}, \\
&v  =\gamma \kappa & & \text { on }(0, T) \times D\times \{1\}, \\
 &\frac{\partial \rho}{\partial t} =-  {B}(\rho(t)) v & & \text { on }(0, T) \times D\times \{1\}.
\end{aligned}\right.
\end{equation}

Next we continue to reduce \eqref{fq2.1} into an equation with only the unknown function $\rho$.

Given $\hat{\rho} \in h^{4+\alpha}_+\left(\bar{D}\right)$, there exists a unique solution in  $h^{4+\alpha}_{2\pi}(\bar{\Omega})$ of
$$
\left\{\begin{aligned}
 &{A}(\hat{\rho}) u+u=0 && \text { in } \Omega, \\
&\frac{\partial u}{\partial y} (\cdot, 0)=0, \quad u(\cdot, 1)=1 && \text { on } \bar{D},
\end{aligned}\right.
$$
which is
\begin{align}\nonumber
      {R}(\hat{\rho}) 1.
\end{align}
Here, $h^{k+\alpha}_{2\pi}(\bar{\Omega})$ is composed of functions from $h^{k+\alpha}(\mathbb{R}^2\times [0,1])$ that have $2\pi-$periodicity in $x_1$ and $x_2$.

The elliptic regularity theory implies
\begin{align}\nonumber
    {R}(\cdot)  1 \in C^{\infty}\left(h^{4+\alpha}_+\left(\bar{D}\right), h^{i+\alpha}_{2\pi}(\bar{\Omega})\right) \quad \text{     for }  i=2,3,4.
\end{align}
Then the unique solution of
$$
\left\{\begin{aligned}
 &{A}(\hat{\rho}) u+u=0 && \text { in } \Omega, \\
&\frac{\partial u}{\partial y} (\cdot, 0)=0, \quad u(\cdot, 1)=\Phi(t) && \text { on } \bar{D},
\end{aligned}\right.
$$
is given by
\begin{align}\label{f2.1aa}
    u(t)= \Phi(t) {R}(\hat{\rho}) 1.
\end{align}

Denote by ${S}\left(\hat{\rho} \right)$ and $\mathcal{T}\left( \hat{\rho} \right)$ the solution operators of the following problems,
respectively:
\begin{align}\nonumber
    \left\{  {\begin{array}{ll} A\left( \hat{\rho} \right) u = f & \text{ in }{\Omega }, \\ \dfrac{\partial u}{\partial y} (\cdot, 0)=0, \quad u(\cdot, 1)=0& \text{ on }\bar{D} ; \end{array}\;
    \qquad
    \left\{  \begin{array}{ll} A\left( \hat{\rho} \right) u = 0 & \text{ in }{\Omega }, \\ \dfrac{\partial u}{\partial y} (\cdot, 0)=0, \quad u(\cdot, 1) = g & \text{ on }\bar{D}, \end{array}\right. }\right.
\end{align}
then
\begin{align} \nonumber
&   {S} \in C^{\infty}\left(h^{4+\alpha}_+\left(\bar{D}\right),   {L}\left(h^{1+\alpha}_{2\pi}(\bar{\Omega}), h^{3+\alpha}_{2\pi}(\bar{\Omega})\right)\right),  \\\nonumber
&   \mathcal{T} \in C^{\infty}\left(h^{4+\alpha}_+\left(\bar{D}\right),   {L}\left(h^{2+\alpha}_{2\pi}\left(\bar{D}\right), h^{2+\alpha}_{2\pi}(\bar{\Omega})\right)\right).
\end{align}

Given
$
\hat{\rho} \in h^{4+\alpha}_+\left(\bar{D}\right) \text { and }(f, g) \in  h^{1+\alpha}_{2\pi}(\bar{\Omega}) \times h^{2+\alpha}_{2\pi}\left(\bar{D}\right),
$
the solution of
$$
\left\{\begin{aligned}
&  {A}(\hat{\rho}) p=f && \text { in } \Omega, \\
&\frac{\partial p}{\partial y} (\cdot, 0)=0, \quad p(\cdot, 1)=g && \text { on } \bar{D},
\end{aligned}\right.
$$
is
\begin{align}\label{f2.3}
p=  {S}(\hat{\rho}) f+  \mathcal{T}(\hat{\rho}) g.
\end{align}

Finally, given $\hat{\rho} \in h^{4+\alpha}_+\left(\bar{D}\right)$, let $  {P}(\hat{\rho})$ be the linear operator

\[
{P}(\hat{\rho})v = -\frac{(1+\hat{\rho}_{x_{2}^{\prime}}^2)v_{x_{1}^{\prime}x_{1}^{\prime}}+(1+\hat{\rho}_{x_{1}^{\prime}}^2)v_{x_{2}^{\prime}x_{2}^{\prime}}-2\hat{\rho}_{x_{1}^{\prime}}\hat{\rho}_{x_{2}^{\prime}}v_{x_{1}^{\prime}x_{2}^{\prime}}}{2\big(1+\hat{\rho}_{x_{1}^{\prime}}^2+\hat{\rho}_{x_{2}^{\prime}}^2\big)^{3/2}}.
\]
Hence the curvature is given by $\kappa(\hat{\rho})=  {P}(\hat{\rho}) \hat{\rho}$ and
\begin{align}
\nonumber
  {P} \in C^{\infty}\left(h^{4+\alpha}_+\left(\bar{D}\right),   {L}\left(h^{4+\alpha}_{2\pi}\left(\bar{D}\right), h^{2+\alpha}_{2\pi}\left(\bar{D}\right)\right)\right).
\end{align}

Together with \re{f2.3} and \re{f2.1aa}, for
$
\hat{\rho} \in h^{4+\alpha}_+\left(\bar{D}\right)$, the unique solution of $\eqref{fq2.1}_{4}$--$\eqref{fq2.1}_{6}$ is
\begin{align*}
 v = \mu {S}\left( \hat{\rho}\right)\left( u- \tilde{\sigma}\right)  + \gamma  \mathcal{T}\left( \hat{\rho}\right) {P}(\hat{\rho}) \hat{\rho}  =
  \mu \Phi(t) {S}\left( \hat{\rho} \right) {R}(\hat{\rho}) 1
  - \mu {S}\left(\hat{\rho} \right) \tilde{\sigma}   + \gamma \mathcal{T}\left(\hat{\rho} \right) {P}(\hat{\rho}) \hat{\rho}  .
\end{align*}
Then
\begin{align*}
{B}(\hat{\rho}) v   = \gamma {B}(\hat{\rho}) \mathcal{T}\left(\hat{\rho} \right) {P}(\hat{\rho}) \hat{\rho}
  + \mu \Phi(t) {B}(\hat{\rho}) {S}\left( \hat{\rho} \right) {R}(\hat{\rho}) 1
  - \mu  {B}(\hat{\rho}) {S}\left(\hat{\rho} \right) \tilde{\sigma}  .
\end{align*}

Let
\begin{align}\label{Q2.2}
\Psi_1(\hat{\rho}):=  {B}(\hat{\rho})   \mathcal{T}(\hat{\rho})   {P}(\hat{\rho}), \quad \Psi_2(\hat{\rho}):=\mu   {B}(\hat{\rho}) {S}(\hat{\rho}){R}(\hat{\rho}) 1 , \quad  \Psi_3(\hat{\rho}):= \mu {B}(\hat{\rho}) {S}(\hat{\rho})\tilde{\sigma},
\end{align}
for $\hat{\rho} \in h^{4+\alpha}_+\left(
\bar{D}\right)$.
Then
\begin{align}\label{rq1}
(\Psi_1, \Psi_2, \Psi_3) \in C^{\infty}\left(h ^{4+\alpha}_+\left(\bar{D}\right),   {L}\left(h^{4+\alpha}_{2\pi}\left(\bar{D}\right), h^{1+\alpha}_{2\pi}\left(\bar{D}\right)\right) \times h^{2+\alpha}_{2\pi}\left(\bar{D}\right) \times h^{2+\alpha}_{2\pi}\left(\bar{D}\right) \right).
\end{align}
The Nemytschi operator \( F \) is defined as
\begin{align}
\label{f2.6f}
 {F}:\left( {{\rho}(t) ,\gamma }\right)  \mapsto  - \gamma\Psi_1 ({\rho}(t)){\rho(t)} - \Phi(t)  \Psi_2({\rho(t)}) + \Psi_3 ({\rho(t)}) .
\end{align}

Problem \eqref{fq2.1} is equivalent to
\begin{align}\label{Q222}
\frac{d \rho}{d t}  =F(\rho,\gamma).
\end{align}

To apply the method of Lyapunov-Schmidt and
get the $T-$periodic solution,
we introduce the
following Banach spaces $E$, $W$ and $Y$ of \( T- \)periodic H\"{o}lder continuous functions taking
values in \( X \) or \( Z \) ,
where
\begin{align*}
  &X:=\{\hat{\rho}\in  h^{4+\alpha}_{2\pi}\left(\bar{D}\right) ~|~ \hat{\rho}(-x_1,x_2)=\hat{\rho}(x_1,-x_2)=  \hat{\rho}(x_1,x_2)   \},\\
   & Z := \{\hat{\rho}\in  h^{1+\alpha}_{2\pi}\left(\bar{D}\right) ~|~ \hat{\rho}(-x_1,x_2)=\hat{\rho}(x_1,-x_2)=  \hat{\rho}(x_1,x_2)   \},\\
 &  E := {C}_{T }^{\frac{\alpha}{3} }\left( {\mathbb{R},X}\right)  =  \left\{  {x : \mathbb{R} \rightarrow  X \mid  x\left( {t + T}\right)  = x\left( t\right) ,t \in  \mathbb{R},}\right. \\
&\qquad\qquad \| x{\| }_{E} = \| x{\| }_{X,\frac{\alpha}{3} } =  \mathop{\max }\limits_{{t \in  \mathbb{R}}}\| x(t){\| }_{X} + \mathop{\sup }\limits_{{s \neq  t}}\frac{\| x\left( t\right)  - x( s){\|}_{X}}{{\left| t - s\right| }^{\frac{\alpha}{3} }} < \infty \} , \\
& W :=  {C}_{T }^{\frac{\alpha}{3} }\left( {\mathbb{R},Z}\right) \text{ analogously},\\ \nonumber
& Y :=  {C}_{T }^{1 + \frac{\alpha}{3} }\left( {\mathbb{R},Z}\right)  \equiv  \left\{  {x : \mathbb{R} \rightarrow  Z \mid  x,\frac{dx}{dt}\text{ (exists) } \in  {C}_{T }^{\frac{\alpha}{3} }\left( {\mathbb{R},Z}\right) ,}\right. \\
 &\qquad\qquad  \| x{\| }_{Y} =\| x{\| }_{Z,1 + \frac{\alpha}{3} } =  \| x{\| }_{Z,\frac{\alpha}{3} } + \left. {\begin{Vmatrix}\frac{dx}{dt}\end{Vmatrix}}_{Z,\frac{\alpha}{3} }\right\}  ,\\
 &  Y \cap  E = {C}_{T }^{1 + \frac{\alpha}{3} }\left( {\mathbb{R},Z}\right)  \cap {C}_{T }^{\frac{\alpha}{3} }\left( {\mathbb{R},X}\right) \text{ is a Banach space with norm } \| x{\parallel }_{X,\frac{\alpha}{3} } + {\begin{Vmatrix}\frac{dx}{dt}\end{Vmatrix}}_{Z,\frac{\alpha}{3} }.
\end{align*}

Assume that ${\rho_* }$ is the positive $T-$periodic solution given in Theorem \ref{thm:1.2}. Hence \eqref{Q222} has a solution $(\rho_*, \gamma)$ .

Define $G:\tilde{U} \times \tilde{V} \rightarrow W $ by
\begin{equation}\label{GGG}
   G(\tilde{\rho}, \gamma) = \frac{d \rho_*}{d t} +\frac{d\tilde{\rho}}{d t}  -  F(\rho_*+\tilde{\rho},\gamma),
\end{equation}
where
$ \tilde{U} \subset  Y \cap E $ is a sufficiently small neighborhood of $0$ to ensure that \( \rho_*+ \tilde{\rho} >0 \) holds for any $\tilde{\rho} \in \tilde{U} $ and $ \tilde{V} \subset \mathbb{R}$ is an open neighborhood of $\gamma_0$.

\begin{lem}
The mapping $G:\tilde{U} \times \tilde{V} \rightarrow  W $ is well defined and $G\in C^{\infty}(\tilde{U} \times \tilde{V}, W)$.
\end{lem}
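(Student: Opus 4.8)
The plan is to show well-definedness and smoothness of $G$ by decomposing it into the building blocks assembled in Section 2.1 and tracking regularity in both the spatial variable (H\"older/little-H\"older scales over $\bar D$ and $\bar\Omega$) and the time variable (the $T$-periodic H\"older scale $C^{\alpha/3}_T$). First I would observe that $G(\tilde\rho,\gamma)$ is, by \eqref{GGG}, the sum of three pieces: the fixed term $d\rho_*/dt$, the linear term $d\tilde\rho/dt$, and the composition $F(\rho_*+\tilde\rho,\gamma)$ with $F$ given by \eqref{f2.6f}. The term $d\rho_*/dt$ lies in $W$ because $\rho_*\in Y$ by Theorem~\ref{thm:1.2} (its time-regularity comes from the ODE \eqref{1.15} together with $\Phi\in C^{1+\alpha/3}$, and spatial regularity is trivial since $\rho_*$ is $x$-independent, hence in $X$ and in $Z$). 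The linear map $\tilde\rho\mapsto d\tilde\rho/dt$ sends $Y\cap E$ into $W$ and is bounded, hence $C^\infty$. So the whole question reduces to the regularity of $(\tilde\rho,\gamma)\mapsto F(\rho_*+\tilde\rho,\gamma)$ as a map $\tilde U\times\tilde V\to W$.

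Next I would invoke \eqref{rq1}: the operators $\Psi_1,\Psi_2,\Psi_3$ are $C^\infty$ from $h^{4+\alpha}_+(\bar D)$ into the relevant spaces of bounded linear operators / functions, and the curvature operator $P$, the solution operators $R,S,\mathcal T$, and the Dirichlet-to-Neumann-type operator $B$ all have the smoothness stated in Section 2.1. Since $\rho_*+\tilde\rho$, for $\tilde\rho\in\tilde U$, takes values in $h^{4+\alpha}_+(\bar D)$ (the smallness of $\tilde U$ guarantees positivity), composition of these fixed smooth Fréchet-analytic maps with the (affine, hence smooth) substitution $\tilde\rho\mapsto\rho_*+\tilde\rho$ is smooth as a map of the \emph{spatial} argument. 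The additional ingredient is the explicit $t$-dependence through the factor $\Phi(t)$ multiplying $\Psi_2$ in \eqref{f2.6f} and through the $t$-dependence of the argument $\rho_*(t)+\tilde\rho(t)$. The standard device here — this is exactly the point where one uses that $E,W,Y$ are spaces of $C^{\alpha/3}_T$-functions with values in Banach spaces — is the lemma that if $\Xi:\mathcal O\subset X_1\to X_2$ is $C^\infty$ between Banach spaces, then the superposition operator $u(\cdot)\mapsto \Xi(u(\cdot))$ is $C^\infty$ from $C^{\alpha/3}_T(\mathbb R,\mathcal O)$ into $C^{\alpha/3}_T(\mathbb R,X_2)$, with derivatives given by pointwise composition with the derivatives of $\Xi$. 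Applying this with $X_1 = X$ (so that $\rho_*+\tilde\rho$ has values in $h^{4+\alpha}_+(\bar D)\cap X$) and $X_2$ the target $h^{1+\alpha}_{2\pi}(\bar D)\cap Z$, and noting that multiplication by $\Phi\in C^{1+\alpha/3}_T(\mathbb R)\subset C^{\alpha/3}_T(\mathbb R)$ is a bounded bilinear (hence smooth) operation on $C^{\alpha/3}_T$, one concludes that each of the three summands of $F(\rho_*+\tilde\rho,\gamma)$ is $C^\infty$ in $(\tilde\rho,\gamma)$ with values in $W$; note also the dependence on $\gamma$ is linear. Summing, $G\in C^\infty(\tilde U\times\tilde V,W)$.

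Two checks remain to make the statement complete. First, \emph{well-definedness}, i.e. that $G(\tilde\rho,\gamma)$ really lands in $W=C^{\alpha/3}_T(\mathbb R,Z)$ and not merely in $C^{\alpha/3}_T(\mathbb R,h^{1+\alpha}_{2\pi}(\bar D))$: one must verify that each operator preserves the evenness/symmetry constraints cut out in the definitions of $X$ and $Z$ (the reflections $x_1\mapsto-x_1$, $x_2\mapsto-x_2$). This is a symmetry-reduction argument — the Hanzawa diffeomorphism $\theta_{\hat\rho}$, the Laplacian, and the boundary conditions all commute with these reflections when $\hat\rho$ is even, so $R,S,\mathcal T,P,B$, and hence $\Psi_1,\Psi_2,\Psi_3$, map even functions to even functions; since $\rho_*$ is (trivially) even and $\tilde\rho\in\tilde U\subset Y\cap E$ is even, $F(\rho_*+\tilde\rho,\gamma)$ is even. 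This is deferred to Section 2.2 in the paper's plan, so here it suffices to cite it. Second, one should note the index bookkeeping: $\Psi_1$ gains two derivatives worth of loss (it maps $h^{4+\alpha}$ to $h^{1+\alpha}$) while $\Psi_2,\Psi_3$ map into $h^{2+\alpha}\subset h^{1+\alpha}$, so the common target is $h^{1+\alpha}_{2\pi}(\bar D)$, consistently with $Z$; and since $d\tilde\rho/dt\in C^{\alpha/3}_T(\mathbb R,Z)$ by definition of $Y$, all summands live in $W$. The main obstacle — and the only non-bookkeeping step — is the superposition-operator smoothness lemma in the $C^{\alpha/3}_T$-valued setting; once that is available (it is classical, cf. the references on Well-posedness cited in Section 2.1), everything else is assembly of the already-established smoothness facts \eqref{rq1} together with boundedness of multiplication by $\Phi$ on $C^{\alpha/3}_T$.
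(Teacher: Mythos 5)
Your proposal is correct and follows essentially the same route as the paper, which compresses the argument to a one-line citation of the regularity of $\rho_*$ and $\Phi$ together with \eqref{f2.6f}, \eqref{Q2.2}, and \eqref{rq1}. What you have written is a careful unpacking of that citation — the decomposition of $G$, the superposition-operator smoothness in the $C^{\alpha/3}_T$ scale, the multiplication by $\Phi$, and the symmetry bookkeeping — rather than a different method.
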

\begin{proof}
 The fact that $\rho_* \in C^{1+\frac{\alpha}{3}}{([0,\infty))} $ and $ \Phi\in C^{1+\frac{\alpha}{3}}{([0,\infty))}$ are $T-$periodic, \eqref{f2.6f}, \eqref{Q2.2} and \eqref{rq1} imply the result.
\end{proof}

\vskip 0.3cm

Next, we use the semigroup theory to prove that the operator $ G_{\tilde{\rho}}(0,\gamma)$ is a Fredholm operator with index zero.

\begin{lem}\label{lem2.8}
$ G_{\tilde{\rho}}(0,\gamma)\in  L((Y \cap E),W)$
is a Fredholm operator with index zero.
\end{lem}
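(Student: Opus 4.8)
The plan is to compute $G_{\tilde\rho}(0,\gamma)$ explicitly and recognize it as a first-order linear evolution operator whose Fredholm property on the periodic spaces follows from spectral properties of an associated elliptic operator on the fixed domain. First I would differentiate \eqref{GGG} in $\tilde\rho$ at $\tilde\rho=0$: since $\tfrac{d}{dt}$ is linear, $G_{\tilde\rho}(0,\gamma)h = \tfrac{dh}{dt} - F_{\rho}(\rho_*,\gamma)h$. The key is that $F_{\rho}(\rho_*(t),\gamma)$ is, for each fixed $t$, a linear operator from $X$ to $Z$, obtained by differentiating the composition of the solution operators $\Psi_1,\Psi_2,\Psi_3$ (hence of $A(\rho),B(\rho),S(\rho),\mathcal T(\rho),R(\rho),P(\rho)$) with respect to $\rho_*$; because $\rho_*=\rho_*(t)$ is merely a $t$-dependent \emph{positive constant} in the spatial variables, these derivatives can be computed by separation of variables / Fourier series in $(x_1,x_2)$, and on each Fourier mode $(n,m)$ the leading (highest-order, i.e.\ fourth-order in the curvature term) part acts as multiplication by an explicit symbol. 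In particular the principal symbol of $F_{\rho}(\rho_*(t),\gamma)$ on the mode with $j=n^2+m^2$ is $-\tfrac{\gamma j^{3/2}}{2}\tanh(\sqrt j\,\rho_*(t))\cdot(\text{positive factor})$ coming from $\gamma B\mathcal T P$, so $F_\rho$ is a negative, relatively bounded perturbation of a sectorial operator; this gives $-F_\rho(\rho_*(t),\cdot)$ a uniformly bounded analytic semigroup structure in $t$.

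The second step is to invoke the semigroup/maximal-regularity machinery (as in \cite{2009Well}, extended to $n=3$, which the excerpt has already set up) for the non-autonomous linear problem $\tfrac{dh}{dt}=F_\rho(\rho_*(t),\gamma)h+f$ on $\mathbb R$ with $T$-periodicity. The operator $G_{\tilde\rho}(0,\gamma):Y\cap E\to W$ is exactly ``$h\mapsto \dot h - F_\rho(\rho_*(\cdot),\gamma)h$'' restricted to the periodic, symmetric little-Hölder-valued spaces. I would argue: (i) by $T$-periodicity, solvability of $G_{\tilde\rho}(0,\gamma)h=f$ is governed by the monodromy (period map) $\mathcal U(T,0)$ of the evolution family $\mathcal U(t,s)$ generated by $F_\rho(\rho_*(\cdot),\gamma)$; (ii) because the spatial operators have compact resolvent on the little-Hölder spaces over the bounded domain $\bar D$, the monodromy operator is compact on the phase space $X$, hence $I-\mathcal U(T,0)$ is Fredholm of index zero; (iii) translating this back, $G_{\tilde\rho}(0,\gamma)$ has finite-dimensional kernel (the $T$-periodic orbits, i.e.\ eigenfunctions of $\mathcal U(T,0)$ with eigenvalue $1$) and closed range of finite codimension equal to $\dim\ker$, by the Fredholm alternative for $I-\mathcal U(T,0)$ together with a duality/adjoint argument on the formal adjoint evolution problem. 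This yields index zero.

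Concretely, the cleanest route is the Fourier decomposition: write $h(t,x_1,x_2)=\sum_{n\ge m\ge 0} a_{n,m}(t)\cos(nx_1)\cos(mx_2)$ respecting the symmetry defining $X$, so that $G_{\tilde\rho}(0,\gamma)$ becomes block-diagonal, acting on each scalar coefficient $a_{n,m}(t)$ as the scalar ODE operator $a\mapsto \dot a + A_{j,\gamma}(t)\,a$ with $j=n^2+m^2$ and $A_{j,\gamma}$ given by \eqref{Aj} (up to matching conventions; $S_{j,\gamma_j}(t)=e^{-\int_0^t A_{j,\gamma_j}(s)\,ds}$ is precisely the periodic solution of the homogeneous scalar equation when the Floquet multiplier is $1$). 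For each scalar block, $a\mapsto \dot a + A_{j,\gamma}(t)a$ from $C_T^{1+\alpha/3}$ to $C_T^{\alpha/3}$ is Fredholm of index zero: it is invertible iff $\int_0^T A_{j,\gamma}\ne 0$, and when $\int_0^T A_{j,\gamma}=0$ both kernel and cokernel are one-dimensional (spanned by $e^{-\int_0^t A}$ and, for the cokernel, by $e^{\int_0^t A}$ via integration against it). Since for all but finitely many pairs $(n,m)$ the multiplier $\int_0^T A_{j,\gamma}\ne 0$ — indeed this is exactly the quantity $k_2(j)\gamma - \mu k_1(j)$ up to sign, vanishing only at $\gamma=\gamma_j$ for the finitely many relevant $j$ — only finitely many blocks are non-invertible, and summing the (index-zero) blocks plus controlling the tail via the elliptic estimates that make $R,S,\mathcal T,P$ smooth bounded maps shows $G_{\tilde\rho}(0,\gamma)$ is Fredholm of index zero on $Y\cap E$.

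The main obstacle is step two made rigorous: justifying that the block decomposition converges in the little-Hölder-valued periodic norms and that the ``tail'' blocks (large $j$) contribute an invertible, bounded operator with bounded inverse, so that the whole operator is a finite-rank perturbation of an isomorphism. This requires the uniform (in $j$) resolvent/semigroup estimates for the spatial operators — essentially the sectoriality of $A(\rho_*(t))$ plus the lower-order nature of $\Psi_2,\Psi_3$ and the fourth-order ellipticity of $\Psi_1$ — together with the maximal-$C^{\alpha/3}$-regularity of the periodic problem. All the necessary smoothness and mapping properties are already recorded in \eqref{rq1} and the lines preceding it (quoted from \cite{2009Well}), so the remaining work is to package them into the Fredholm conclusion; I expect this to be the technical heart of the lemma, while the scalar-block computation is routine.
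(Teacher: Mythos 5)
Your proposal oscillates between two strategies, and in fact your first one is essentially the paper's proof, while your preferred one (Fourier block diagonalization) has the genuine gap you yourself flag.

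The paper proceeds as follows. It first splits the linearization as
\[
D_{\tilde\rho}G(0,\gamma)=\Bigl(\tfrac{d}{dt}+\gamma\Psi_1(\rho_*(t))\Bigr)+\Bigl(\gamma D_{\tilde\rho}\Psi_1(\rho_*(t))[\cdot,\rho_*(t)]+\Phi(t)D_{\tilde\rho}\Psi_2(\rho_*(t))-D_{\tilde\rho}\Psi_3(\rho_*(t))\Bigr),
\]
and uses that the second bracket maps $Y\cap E$ into $h^{2+\alpha}$-valued periodic Hölder functions, hence is compact into $W$. For the main part it quotes an abstract result (Kielh\"ofer, Proposition I.13.1): if $A_0(t)$ generates analytic semigroups with a compact evolution family and has the expected $C^\beta$ maximal regularity, then $\tfrac{d}{dt}-A_0(t)$ on the periodic little-H\"older spaces is Fredholm of index zero. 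The three hypotheses are verified from the facts, already on record in \cite{2009Well} and \cite{Amann1995}, that $\Psi_1(\rho_*(t))\in H(h^{4+\alpha},h^{1+\alpha})$, that $\rho_*\in C^{\alpha/3}$, and that $h^{4+\alpha}\hookrightarrow h^{1+\alpha}$ is compact. Adding back the compact perturbation preserves Fredholm index zero. Your items (i)--(iii) are a rough sketch of exactly this (including compactness of the monodromy from compact spatial embeddings), but you abandon the route before reaching the step the paper actually needs: you never isolate the principal part $\tfrac{d}{dt}+\gamma\Psi_1(\rho_*(t))$ from the genuinely lower-order Fr\'echet-derivative terms, and you do not name or reconstruct the abstract Fredholm theorem for nonautonomous periodic parabolic problems on which the conclusion rests.

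Your preferred route --- Fourier block-diagonalization in $(x_1,x_2)$ --- is correct at the level of each scalar mode (the block operator $a\mapsto\dot a+A_{j,\gamma}(t)a$, its Floquet multiplier $e^{-\int_0^T A_{j,\gamma}}$, and the identification $\int_0^T A_{j,\gamma}=0\iff\gamma=\gamma_j$ are all accurate and are used elsewhere in the paper to compute the kernel). However, on the spaces $Y\cap E$ and $W$ built from little-H\"older spaces, the Fourier expansion is not an unconditional decomposition and there is no orthogonality; assembling the block-wise Fredholm properties into a statement about the full operator would require uniform (in $j$) two-sided H\"older estimates for the tail blocks, which is precisely the ``technical heart'' you acknowledge you have not supplied. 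That is a genuine gap, not a routine verification, and it is the reason the paper avoids this route entirely in favor of the semigroup/Kielh\"ofer argument. To repair your proposal, return to your monodromy sketch, make the main-part/compact-perturbation split explicit, and invoke the appropriate abstract Fredholm result for $\tfrac{d}{dt}-A_0(t)$ rather than trying to push the Fourier decomposition through the H\"older norms.
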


\begin{proof}
From \re{GGG}, \re{f2.6f} and \re{Q2.2}, the derivative of $G$ at $0$ is given by
\begin{align}
\nonumber
 D_{\tilde{\rho}} G(0, \gamma)
 = \frac{d }{d t} +\gamma\Psi_1 ({\rho_*(t)})
 +\gamma D_{\tilde{\rho}}\Psi_1 ({\rho_*(t)})[\cdot,{\rho_*(t)}]+ \Phi(t)  D_{\tilde{\rho}}\Psi_2({\rho_*(t)})- D_{\tilde{\rho}}\Psi_3 ({\rho_*(t)}) .
\end{align}
\cite[Page 183]{2009Well} gives
$$ D_{\tilde{\rho}}\Psi_1 ({\rho_*(t)})[\cdot,{\rho_*(t)}],~~  D_{\tilde{\rho}}\Psi_2({\rho_*(t)}) \text{
 and  } D_{\tilde{\rho}}\Psi_3 ({\rho_*(t)}) \in {L}(h^{4+\alpha}(\bar{D}),h^{2+\alpha}(\bar{D}))
 \text{ for any } t\in [0, T].$$
 Then
 \begin{align}\label{compact1}
   \gamma D_{\tilde{\rho}}\Psi_1 ({\rho_*(t)})[\cdot,{\rho_*(t)}]+ \Phi(t)  D_{\tilde{\rho}}\Psi_2({\rho_*(t)}) - D_{\tilde{\rho}}\Psi_3 ({\rho_*(t)})
:  Y \cap E \rightarrow W  \text{ is compact}.
 \end{align}
The term $ \frac{d }{d t} +\gamma\Psi_1 ({\rho_*(t)})  $ is the main component of $ D_{\tilde{\rho}} G(0, \gamma) $. We shall study this operator in detail.

\cite[Proposition I.13.1]{Kielh2012Bifurcation}  showed that under the three assumptions
\begin{align}\label{fffgg1}
\begin{array}{l}
{A}_{0}(t) \text{ generates a holomorphic semigroup for each fixed }
 t \in [0,T],
 \text{ and this family of } \\  \text{semigroups,
  in turn, generates a
``fundamental solution" }
 {U}_{0}( t,\tau)  \in  L( Z,Z) \text{ for } 0 \leq \\ \tau  \leq  t \leq  T  \text{ such that } {U}_{0}( t,t)  = I, \;{U}_{0}( t,\tau ) {U}_{0}( \tau ,s)  = {U}_{0}( t,s), \text{ and any
 solution of }\\ \frac{dx}{dt} = {A}_{0}( t) x  \text{ is given by  } x( t)  = {U}_{0}( t,0) x( 0).
\end{array}
\end{align}
\begin{align}\label{fffgg2}
\begin{array}{l}
{U}_{0}( t,\tau )  \in  L( Z,Z)   \text{ is compact for } 0 \leq  \tau  < t \leq  T.
\end{array}
\end{align}
\begin{align}\label{fffgg3}
\begin{array}{l}
\text{For } f \in C^\beta([0, T], Z) \text{ and } \hat{\varphi} \in Z,  \text{ the solution of } \frac{d x}{d t}=A_0(t) x+f \text{ and } x(0)=\hat{\varphi} \text{ is given } \\\text{by }
x(t)=U_0(t, 0) \hat{\varphi}+\int_0^t U_0(t, s) f(s) d s
 \text{ and } x \in C^\beta([\varepsilon, T], X) \text{ for any } \varepsilon>0,
\end{array}
\end{align}
the operator \( {\widehat{J}}_{0}= \frac{d}{dt} - A_0(t) : Y \cap E \rightarrow W \) is a Fredholm operator of index zero,
where  $W=C_{2 \pi}^\beta(\mathbb{R}, Z), E=C_{2 \pi}^\beta(\mathbb{R}, X)$, and $Y=$ $C_{2 \pi}^{1+\beta}(\mathbb{R}, Z)$.

Take
\begin{align}\nonumber
A_0(t)= -\gamma\Psi_1 ({\rho_*(t)}).
\end{align}

\cite[Theorem 3]{2009Well} or the similar proof of \cite[the proof of Theorem 1]{2001Classical}  implies that $\Psi_1\left(\hat{\rho}\right)
 \in{H}(h^{4+\alpha}_{2\pi}(\bar{D}),h^{1+\alpha}_{2\pi}(\bar{D}))$. Here, ${H}(E_1,E_0)$ is the set of all $A\in{L}(E_1,E_0)$ such that $-A$, considered as a linear operator in $ E_0$ with domain $E_1$, is the infinitesimal generator of a strongly continuous analytic semigroup $\left\{e^{-{tA}}\:;\:t\geq0\:\right\}$ on $E_0$, that is, in ${L}(E_0).$

Hence,  for every
 $t\in [0, T]$, $-\Psi_1\left(\rho_{*}(t)\right)
 $ has a constant domain $h^{4+\alpha}_{2\pi}(\bar{D})$ and $\Psi_1\left(\rho_{*}(t)\right)
 \in {H}(h^{4+\alpha}_{2\pi}(\bar{D}),$ $h^{1+\alpha}_{2\pi}(\bar{D}))$. Together with a  H\"{o}lder continuous of $\Psi_1\left(\rho_{*}(t)\right)$ with the topology of ${L}(h^{4+\alpha}_{2\pi}(\bar{D}),h^{1+\alpha}_{2\pi}(\bar{D}))$,
\eqref{fffgg1} and \eqref{fffgg3} with $\beta=\frac{\alpha}{3}$ are proved in \cite[Chapter II]{Amann1995}. Below is a detailed explanation.

The fact that $\Psi_1\left(\rho_{*}(t)\right)
 \in {H}(h^{4+\alpha}_{2\pi}(\bar{D}),h^{1+\alpha}_{2\pi}(\bar{D}))$ for every
 $t\in [0, T]$, \eqref{rq1} and $\rho_{*}(t)\in C^{\frac{\alpha}{3}}$
  imply $\Psi_1(\rho_*(t))\! \in\! C^{\frac{\alpha}{3}} \!\!\left([0, T], {L}\left(h^{4+\alpha}_{2\pi}\left(\bar{D}\right)\!, h^{1+\alpha}_{2\pi}\left(\bar{D}\right)\right) \right) $,
  i.e., $\Psi_1(\rho_*(t)) \!\in \!  C^{\frac{\alpha}{3}} \left([0, T], H\!\!\left(h^{4+\alpha}_{2\pi}\left(\bar{D}\right)\!, h^{1+\alpha}_{2\pi}\left(\bar{D}\right)\right) \right)$.
\cite[Chapter II Corollary 4.4.2]{Amann1995} implies that \eqref{fffgg1} holds and
\cite[Chapter II Theorem 1.2.1]{Amann1995} shows that \eqref{fffgg3} is true.

The compactness of \eqref{fffgg2} is given
by a compact embedding $ h^{4+\alpha}(\bar{D})\subset h^{1+\alpha}(\bar{D})$.

Hence we get that $\frac{d }{d t} +\gamma\Psi_1(\rho_*(t)) : Y \cap  E  \rightarrow W  $ is a Fredholm operator of index zero.
Together with \re{compact1}, we find that $G_{\tilde{\rho}}(0,\gamma)$ is a Fredholm operator with index zero.
\end{proof}

In order to apply Crandall-Rabinowitz theorem, we need to compute the derivative of $G$ at $0$.
 Since the derivatives of $R$, $ \mathcal{T}$ and $S$ are difficult to calculate, it is  complex to use them to derive the derivative of \( G \).  However, calculating the derivative of $G$ via the free boundary is comparatively simpler.  Through  Lemma \ref{2.3ss} and Lemma \ref{2.4ss}, we derive the derivative expression in terms of the solution of the free boundary problem.

\re{GGG} is equivalent to
\begin{equation}\label{GGG2}
   G(\tilde{\rho}, \gamma) = \frac{d \rho_*}{d t} +\frac{d\tilde{\rho}}{d t}  +{B}(\rho_*(t)+\tilde{\rho}(t)) v,
\end{equation}
where $v$ is defined by $\re{fq2.1}_1$--$\re{fq2.1}_6$.

Now we linearize problem $\re{fq2.1}_1$--$\re{fq2.1}_6$ and compute $  G_{\tilde{\rho}}(0, \gamma)$ by $(u , v )$ through the G$\hat{a}$teaux derivative.  We take $\tilde{\rho}(x_1, x_2, t)=\varepsilon S(x_1, x_2, t)$ and collect the coefficients of $\epsilon$ of $(u , v)$.
Assume that $(u , v)$  has the following expressions:
\begin{align}\nonumber
        &u (x_1, x_2,y', t) = u_*(y',t) + \epsilon u_1(x_1, x_2,y', t) + O(\epsilon^2),\\
        &v (x_1, x_2,y', t) = v_* (y',t)+ \epsilon v_1(x_1, x_2,y', t) + O(\epsilon^2).\label{vp1}
\end{align}
Since the following proof involves two regions $\Omega$ and $\Omega_{\rho(t)}$ and a transformation between them, we denote by $(x_1,x_2,y')$ the point in the first region and by $(x_1,x_2,y)$ the point in the second region.

\begin{lem}\label{2.3ss}
\begin{equation}\label{eq2.24a}
    G_{\tilde{\rho}}(0,\gamma)[S](x_{1}, x_{2},t) = \frac{d S}{d t}(x_{1}, x_{2},t) -\frac{\partial v_*}{\partial y' }(1,t) \frac{S(x_{1}, x_{2},t)}{\rho_*^2(t)} +\frac{\partial v_1}{\partial y'} (x_1, x_2,1, t) \frac{1}{\rho_*(t)} .
\end{equation}
\end{lem}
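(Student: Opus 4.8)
The plan is to compute the Gâteaux derivative of the operator $G$ at $\tilde\rho = 0$ in the direction $S$ directly from the free-boundary formulation \re{GGG2}, namely $G(\tilde\rho,\gamma) = \frac{d\rho_*}{dt} + \frac{d\tilde\rho}{dt} + {B}(\rho_*(t)+\tilde\rho(t))v$. Setting $\tilde\rho = \varepsilon S$ and expanding $(u,v)$ as in \re{vp1}, the first two terms contribute $\frac{d\rho_*}{dt} + \varepsilon \frac{dS}{dt} + O(\varepsilon^2)$; since $G(0,\gamma) = \frac{d\rho_*}{dt} + {B}(\rho_*(t))v_* = 0$, the $O(1)$ terms cancel and I only need the $O(\varepsilon)$ coefficient of ${B}(\rho_*(t)+\varepsilon S)v$. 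Here $v = v_* + \varepsilon v_1 + O(\varepsilon^2)$, so by the product/chain rule this coefficient splits into two pieces: $\big(D_{\tilde\rho}{B}(\rho_*(t))[S]\big)v_*$ and ${B}(\rho_*(t))v_1$.

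First I would recall that ${B}(\hat\rho)v = \theta_{\hat\rho}^*\big(\operatorname{tr}\nabla(\theta_*^{\hat\rho}v),\,\bm v_{\hat\rho}\big)$ with $\bm v_{\hat\rho} = (-\nabla\hat\rho,1)$, which for a flat profile $\hat\rho = \rho(t)$ (no $x_1,x_2$ dependence) reduces to the vertical derivative: if $V = \theta_*^{\rho}v$ lives on $\Omega_{\rho(t)}$ and $v$ on $\Omega$, then $\theta_\rho(x_1',x_2',y') = (x_1',x_2',y'\rho(t))$ gives $v(x_1',x_2',y',t) = V(x_1',x_2',y'\rho(t),t)$, hence $\frac{\partial v}{\partial y'} = \rho(t)\frac{\partial V}{\partial y}$, i.e. ${B}(\rho(t))v = \frac{\partial V}{\partial y}\big|_{y=\rho(t)} = \frac{1}{\rho(t)}\frac{\partial v}{\partial y'}\big|_{y'=1}$. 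Applying this at $\hat\rho = \rho_*(t)$ to $v_1$ immediately yields the third term $\frac{1}{\rho_*(t)}\frac{\partial v_1}{\partial y'}(x_1,x_2,1,t)$ of \re{eq2.24a}.

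The remaining — and I expect main — computational step is the term $\big(D_{\tilde\rho}{B}(\rho_*(t))[S]\big)v_*$. Here one must differentiate the Hanzawa-pullback normal-derivative operator ${B}(\hat\rho)$ with respect to $\hat\rho$ in the direction $S$ and then evaluate on the flat leading-order pressure $v_*$. The perturbation $S = S(x_1,x_2,t)$ does have $x_1,x_2$-dependence, so a priori the $-\nabla\hat\rho$ component of $\bm v_{\hat\rho}$ could contribute; however, at leading order $v_*$ is flat ($x_1,x_2$-independent), so $\nabla(\theta_*^{\hat\rho}v_*)$ is purely vertical and its inner product with the horizontal part of $\bm v_{\hat\rho}$ vanishes. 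What survives is the variation coming from the change in the vertical scaling $y \mapsto y'\hat\rho$: differentiating $\frac{1}{\hat\rho}\frac{\partial v}{\partial y'}\big|_{y'=1}$ — more precisely, tracking how the trace of $\nabla(\theta_*^{\hat\rho}v_*)$ at the free boundary moves as $\hat\rho = \rho_* + \varepsilon S$ — produces a term proportional to $\frac{\partial v_*}{\partial y'}(1,t)$ times $S/\rho_*^2(t)$ with a minus sign. Carrying this out carefully (using that $v_*$ solves the flat problem and in particular $\partial_{y'}v_*(1,t)$ is well-defined and that the $O(\varepsilon)$ part of $u$ near the boundary does not feed into $v_*$ itself) gives exactly $-\frac{\partial v_*}{\partial y'}(1,t)\frac{S(x_1,x_2,t)}{\rho_*^2(t)}$. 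Combining the three contributions yields \re{eq2.24a}. The obstacle is purely the bookkeeping of the Hanzawa derivative $D_{\tilde\rho}{B}(\rho_*(t))[S]$; once one exploits the flatness of $\rho_*$ and $v_*$ to kill the horizontal-gradient contributions, it collapses to the single scaling term, and the result follows.
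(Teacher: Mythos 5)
Your proposal is correct and follows essentially the same route as the paper: expand $G(\varepsilon S,\gamma)$ to first order, split the $O(\varepsilon)$ coefficient of ${B}(\rho_*+\varepsilon S)[v_*+\varepsilon v_1]$ by the product rule into $\big(D_{\tilde\rho}{B}(\rho_*)[S]\big)v_*$ and ${B}(\rho_*)v_1$, observe that the horizontal $(-\nabla\hat\rho)$-contributions are $O(\varepsilon^2)$ so only the vertical-scaling term survives in the first piece, and read off ${B}(\rho_*)v_1 = \frac{1}{\rho_*}\partial_{y'}v_1|_{y'=1}$ in the second. The only slip is the phrase ``$\nabla(\theta_*^{\hat\rho}v_*)$ is purely vertical'' (true at $\hat\rho=\rho_*$ but only up to $O(\varepsilon)$ for $\hat\rho=\rho_*+\varepsilon S$); the intended point — that the horizontal-horizontal pairing is $O(\varepsilon^2)$ — is exactly what the paper uses when it drops the $\rho_{x_i}^2$ terms.
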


\begin{proof} $\eqref{GGG2}$ and \re{vp1} imply
\begin{align}\nonumber
   &G(\varepsilon S(x_1, x_2, t), \gamma)
   \\\label{c2.34s}
   &=  \frac{d \rho_*}{d t}(x_{1}, x_{2},t) +  \varepsilon \frac{d S}{d t}(x_{1}, x_{2},t) +  {B}(\rho_*(t)+\varepsilon S( t)) [v_* (y',t)+ \epsilon v_1(x_1, x_2,y', t) ] + O(\epsilon^2) .
\end{align}
We collect the coefficients of $\epsilon$ of $ {B}(\rho_*(t)+\varepsilon S( t)) [v_* (y',t)+ \epsilon v_1(x_1, x_2,y', t) ]$ from ${B}(\rho_*(t)+\varepsilon S( t))$ $ [v_* (y',t)]$ and ${B}(\rho_*(t)) \left[v_* (y',t)+ \epsilon v_1(x_1, x_2,y', t) \right]  $.

Since
\begin{align*}
{B}(\rho(t)) [v_* (y',t)]
&=\left(\operatorname{tr}  {\nabla \left( {{\theta }_{ * }^{\rho(t) }v_{ * }}\right) , \bm{v}_{\rho(t)}}\right) \left( {\theta }_{\rho(t) }\right)    \\
 &= \left(  \operatorname{tr}\nabla\left [v_* \left(\frac{y}{\rho(x_1, x_2,t) },t\right) \right] , (-\rho_{x_1}, -\rho_{x_2}, 1 ) \right)
 \left( {\theta }_{\rho(t) }\right)\\
 &= \left[ \frac{\partial v_*}{\partial y'}\left(\frac{y}{\rho(x_1, x_2,t) },t\right) \frac{1}{\rho(x_1, x_2,t)}
  + \frac{\partial v_*}{\partial y'}\left(\frac{y}{\rho(x_1, x_2,t) },t\right) \frac{y\rho_{x_1}^2}{\rho^2(x_1, x_2,t)} \right. \\
 &\qquad\qquad\left.+\frac{\partial v_*}{\partial y'}\left(\frac{y}{\rho(x_1, x_2,t) },t\right)\frac{y\rho_{x_2}^2}{\rho^2(x_1, x_2,t)} \right]  \bigg |_{y=\rho(x_1, x_2,t)}
 \left( {\theta }_{\rho(t) }\right)\\
 &=  \frac{\partial v_*}{\partial y' }(1,t) \frac{1}{\rho(x_1, x_2,t)}  + \frac{\partial v_*}{\partial x_1}(1,t)   \frac{\rho_{x_1}^2}{\rho(x_1, x_2,t)}
 +\frac{\partial v_*}{\partial x_2}(1,t)  \frac{\rho_{x_2}^2}{\rho(x_1, x_2,t)}.
\end{align*}
Taking ${\rho}( t)=\rho_*(t)+\varepsilon S( t)$ in the above equality, the coefficients of $\epsilon$ of  ${B}(\rho_*(t)+\varepsilon S( t))$ $ [v_* (y',t)]$ is
\begin{align}\label{eqa2}
&-\frac{\partial v_*}{\partial y' }(1,t) \frac{S(x_{1}, x_{2},t)}{\rho_*^2(t)} .
\end{align}

Also,
\begin{align}\nonumber
&{B}(\rho_*(t)) \left[v_* (y',t)+ \epsilon v_1(x_1, x_2,y', t) \right]\\\nonumber
&=\left( \operatorname{tr} {\nabla \left( {{\theta }_{ * }^{\rho_*(t) }v}\right) ,\bm{v}_{\rho_*(t) }}\right)  \left( {\theta }_{\rho_*(t) }\right)    \\ \nonumber
  &=  \left(\operatorname{tr} \nabla  \left[v_*  \left(\frac{y}{\rho_*(t)} ,t\right)+ \epsilon v_1 \left(x_1, x_2, \frac{y}{\rho_*(t)} , t\right) \right]  , (0, 0, 1 )
 \right)
  \left( {\theta }_{\rho_* (t)}\right)\\\nonumber
   &= \left( \frac{\partial v_*}{\partial y'} \left(\frac{y}{\rho_*(t)} ,t\right)  \frac{1}{\rho_*(t)} + \epsilon \frac{\partial v_1}{\partial y'}  \left(x_1, x_2,\frac{y}{\rho_*(t)}, t\right)  \frac{1}{\rho_*(t)} + O\left(\varepsilon^{2} \right)  \right)  \bigg |_{y=\rho_*(t)}
  \left( {\theta }_{\rho_* (t)}\right)\\\label{eqa1}
&=\frac{\partial v_*}{\partial y'}(1 ,t)  \frac{1}{\rho_*(t)} + \epsilon \frac{\partial v_1}{\partial y'} (x_1, x_2,1, t)  \frac{1}{\rho_*(t)} + O\left(\varepsilon^{2} \right).
\end{align}
\re{c2.34s}, \re{eqa2} and \re{eqa1} lead that \re{eq2.24a} holds.
\end{proof}

A third equivalent form of \re{GGG} and \re{GGG2}, derived from the free boundary problem, is given by
\begin{equation}\label{GGG3}
G(\tilde{\rho}, \gamma) = \frac{d \rho_*}{d t} +\frac{d\tilde{\rho}}{d t} +( \nabla p, \bm{v}_{{\rho_*(t)+ \tilde{\rho}(t) }})\bigg |_{\Gamma_{\tilde{\rho}(t)}},
\end{equation}
where $p$ is
the solution of the following system:
\begin{align}\label{8.2}
&- \Delta\sigma + \sigma = 0 \hspace{1em}&& \text{in }{U}_{\tilde{\rho}},\\
\label{8.3}
&\left.\displaystyle\frac{\partial \sigma}{\partial y}\right|_{\, \Gamma_0} =0, \mm \sigma \Big|_{\,\Gamma_{\tilde{\rho} (t)} } =  \Phi(t), && t>0,\\
\label{8.4}
&-\Delta p = \mu(\sigma-\tilde{\sigma})\hspace{1em}\; &&\text{in }{U}_{\tilde{\rho}}, \\
\label{8.5}
&  \left.\displaystyle\frac{\partial p}{\partial y} \right|_{\, \Gamma_0} =0,\mm  p \Big|_{\,\Gamma_{\tilde{\rho}(t)} }= \gamma\kappa , && t>0.
\end{align}
Here ${U}_{\tilde{\rho}}=\{(x_{1}, x_{2},y,t)~|~ (x_{1}, x_{2},y)\in {\Omega}_{\rho_*(t)+\tilde{\rho}(t)},~t\in  [0,T]\}$ and ${\Gamma}_{\tilde{\rho}(t)}=\{(x_{1}, x_{2},y)~|~ y={\rho_*(t)+\tilde{\rho}(x_{1}, x_{2},t)}\}$.

We take $\tilde{\rho}(x_1, x_2, t)=\varepsilon S(x_1, x_2, t)$ and collect the coefficients of $\epsilon$ of $(\sigma , p)$. Assume that $(\sigma , p)$  has the following expressions:
\begin{align}
        &\sigma (x_1, x_2,y, t) = \sigma_*(y,t) + \epsilon w(x_1, x_2,y, t) + O(\epsilon^2),\label{sigma1}\\ \label{p1}
        &p (x_1, x_2,y, t) = p_* (y,t)+ \epsilon q(x_1, x_2,y, t) + O(\epsilon^2).
\end{align}

Substituting \re{sigma1}--\re{p1}
into \eqref{8.2}--\eqref{8.5},
we obtain that $(w, q)$ satisfies
\begin{eqnarray}
&&\label{4.9aa}\left \{
\begin{split}
&\Delta w(x_{1}, x_{2},y,t) = w(x_{1}, x_{2},y,t), \hspace{3em} 0<y< \rho_{\ast}(t),\\
& \dis\frac{\partial w}{\partial y}(x_{1}, x_{2},0,t)=0,\qquad w(x_{1}, x_{2},\rho_*(t),t)=-\dis\frac{\partial \sigma_*}{\partial y}(\rho_*(t) ,t)S(x_{1}, x_{2},t),
\end{split}
\right.\\
&&\label{4.18}\left \{
\begin{split}
&- \Delta q(x_{1}, x_{2},y,t) =\mu w(x_{1}, x_{2},y,t)
, \hspace{3em} 0<y< \rho_{\ast}(t), \\
&\dis\frac{\partial q}{\partial y}(x_{1}, x_{2},0,t)=0,\qquad q(x_{1}, x_{2},\rho_{\ast}(t),t)=-\dis \frac{\gamma}{2}
(S_{x_{1}x_{1}}+S_{x_{2}x_{2}})-\dis\frac{\partial p_*}{\partial y}(\rho_{\ast}(t) ,t)S(x_{1}, x_{2},t).
\end{split}
\right.
\end{eqnarray}

From \eqref{t1}, \re{vp1}
and \eqref{p1}, we get
\begin{align*}
&v_*(y',t) + \epsilon v_1(x_1, x_2,y', t) + O(\epsilon^2)\\
&= p_{*}( y'  [\rho_*(t)+\epsilon  S(x_1, x_2, t)],t) + \varepsilon q\left( x_1, x_2, y'  [\rho_*(t)+\epsilon  S(x_1, x_2, t)] ,t\right) + O(\epsilon^2).
\end{align*}
Taking $\epsilon=0$
, we have
\begin{align*}
v_*(y',t)= p_*( \rho_*(t)y' , t) .
\end{align*}
Hence
\begin{align}\label{e2.35aa}
\frac{\partial v_*}{\partial y' }(1,t) = \frac{\partial p_*}{\partial y}( \rho_*(t) ,t)  {\rho_*(t)}.
\end{align}

Calculating \( u_1 \) and \( v_1 \) from \eqref{fq2.1} is quite complicated. Instead, we relate them to \( w \) and \( q \), which are relatively easier to solve.

Then
\begin{align*}
&v_{1} (x_1, x_2,y',t ) \\
&=  {\left. \frac{d}{d\varepsilon }\right| }_{\varepsilon  = 0}\Big\{ p_{*}( y'   [\rho_*(t)+\epsilon  S(x_1, x_2, t)],t) + \varepsilon q\left( x_1, x_2, y'   [\rho_*(t)+\epsilon  S(x_1, x_2, t)],t \right)\Big\} \\
& = \frac{\partial p_*}{\partial y}( y'  \rho_*(t) ,t) y' S(x_{1}, x_{2},t)
+q(x_1,x_2,  y'  \rho_*(t),t).
\end{align*}
Hence
\begin{align*}
 & \frac{\partial v_1}{\partial y'} (x_1, x_2,y', t)  =   \frac{\partial^2 p_*}{\partial y^2} (y'  \rho_*(t) ,t) y' S(x_{1}, x_{2},t)\rho_*(t) \\
&\qquad \qquad+ \frac{\partial p_*}{\partial y}( y'  \rho_*(t) ,t)  S(x_{1}, x_{2},t) +\frac{\partial q}{\partial y}(x_1,x_2,  y'  \rho_*(t),t) \rho_*(t) ,
\end{align*}
and
\begin{align*}
 & \frac{\partial v_1}{\partial y'} (x_1, x_2,1, t)   =   \frac{\partial^2 p_*}{\partial y^2}(\rho_*(t) ,t)  S(x_{1}, x_{2},t) \rho_*(t)
+ \frac{\partial p_*}{\partial y}( \rho_*(t) ,t)  S(x_{1}, x_{2},t) \\
&\qquad+\frac{\partial q}{\partial y}(x_1,x_2,  \rho_*(t),t)  \rho_*(t).
\end{align*}

Together with \eqref{eq2.24a} and \re{e2.35aa}, we have the following lemma.
\begin{lem}\label{2.4ss}
\begin{equation}\label{2.25a}
    G_{\tilde{\rho}}(0,\gamma)[S](x_{1}, x_{2},t) = \frac{d S}{d t}(x_{1}, x_{2},t)+ \frac{\partial ^2 p_*}{\partial y^2} (\rho_*(t),t) S(x_{1}, x_{2},t)+ \frac{\partial q}{\partial y}(x_{1}, x_{2},\rho_*(t),t) .
\end{equation}
\end{lem}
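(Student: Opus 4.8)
The plan is to reduce Lemma \ref{2.4ss} to the already-established formula \eqref{eq2.24a} of Lemma \ref{2.3ss} by inserting the explicit link between the transformed quantities $(v_*,v_1)$ and the free-boundary quantities $(p_*,q)$, and then observing a cancellation. First I recall from \eqref{t1} that $v(t)=\theta_{\rho(t)}^*p(\cdot,t)$, i.e. $v(x_1,x_2,y',t)=p\big(x_1,x_2,y'\rho(x_1,x_2,t),t\big)$; substituting $\rho=\rho_*+\epsilon S$, $p=p_*+\epsilon q$ and matching the coefficients of $\epsilon^0$ and $\epsilon^1$ gives $v_*(y',t)=p_*(\rho_*(t)y',t)$ together with the expression for $v_1$ displayed just above the lemma. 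Differentiating these in $y'$ and evaluating at $y'=1$ produces \eqref{e2.35aa} and the companion identity for $\frac{\partial v_1}{\partial y'}(x_1,x_2,1,t)$ in terms of $\frac{\partial^2 p_*}{\partial y^2}(\rho_*(t),t)$, $\frac{\partial p_*}{\partial y}(\rho_*(t),t)$ and $\frac{\partial q}{\partial y}(x_1,x_2,\rho_*(t),t)$.

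Then I substitute these two identities into \eqref{eq2.24a}. The term $-\frac{\partial v_*}{\partial y'}(1,t)\,S/\rho_*^2(t)$ becomes $-\frac{\partial p_*}{\partial y}(\rho_*(t),t)\,S/\rho_*(t)$, while $\frac{1}{\rho_*(t)}\frac{\partial v_1}{\partial y'}(x_1,x_2,1,t)$ expands to $\frac{\partial^2 p_*}{\partial y^2}(\rho_*(t),t)\,S+\frac{\partial p_*}{\partial y}(\rho_*(t),t)\,S/\rho_*(t)+\frac{\partial q}{\partial y}(x_1,x_2,\rho_*(t),t)$. The two contributions proportional to $\frac{\partial p_*}{\partial y}(\rho_*(t),t)\,S/\rho_*(t)$ cancel, and what remains, together with $\frac{dS}{dt}$, is exactly \eqref{2.25a}.

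For this to be rigorous one must still know that the pair $(w,q)$ entering \eqref{2.25a} is the solution of \eqref{4.9aa}--\eqref{4.18}; establishing that is the only genuinely delicate point and I would carry it out first. It comes from plugging the Taylor expansions \eqref{sigma1}--\eqref{p1} into the elliptic free-boundary system \eqref{8.2}--\eqref{8.5} on the moving region $U_{\tilde\rho}$ and collecting the first-order terms: the interior equations fall out immediately, and the care is in linearizing the Dirichlet data on $\Gamma_{\tilde\rho(t)}$, since evaluating $\sigma_*+\epsilon w$ and $p_*+\epsilon q$ on $y=\rho_*(t)+\epsilon S$ generates the shifts $\frac{\partial\sigma_*}{\partial y}(\rho_*(t),t)S$ and $\frac{\partial p_*}{\partial y}(\rho_*(t),t)S$, while the curvature term $\gamma\kappa$ linearizes to $-\frac{\gamma}{2}(S_{x_1x_1}+S_{x_2x_2})$, which is precisely the boundary data in \eqref{4.9aa} and \eqref{4.18}. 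I expect this boundary linearization — not the final substitution — to be the main obstacle, though it is routine; once it is in place, Lemma \ref{2.4ss} follows by the cancellation described above.
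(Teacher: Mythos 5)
Your proposal is correct and follows essentially the same route as the paper: linearize the free-boundary system \eqref{8.2}--\eqref{8.5} to obtain \eqref{4.9aa}--\eqref{4.18}, express $v_*$ and $v_1$ in terms of $p_*$ and $q$ via the Hanzawa pull-back, differentiate in $y'$ at $y'=1$, and substitute into \eqref{eq2.24a}, with the two $\frac{\partial p_*}{\partial y}(\rho_*(t),t)\,S/\rho_*(t)$ terms cancelling exactly as you describe. The paper carries out the same computation in the text between Lemmas~\ref{2.3ss} and~\ref{2.4ss}; your additional remark that the boundary linearization deserves care matches the paper's (terse) derivation of \eqref{4.9aa}--\eqref{4.18} and is a reasonable expansion of it rather than a departure.
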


\subsection{Symmetric properties}
{In this subsection, we shall show that $G(\cdot,\gamma)$ maintains periodicity and symmetry through
\re{GGG3}.}
For integers $n_1, m_1\geq 1$, we define the following periodic and symmetric conditions:
\begin{align}\label{5.5a}
 u\left( x_1,x_2,t+T\right)&=u(x_1,x_2,t),\\\label{5.1}
u\left(\frac{2 \pi}{n_1}+ x_1 ,x_2,t\right)&=u(x_1,x_2,t),\\\label{5.2}
  u\left( x_1,\frac{2 \pi}{m_1}+x_2,t\right)&=u(x_1,x_2,t),\\\label{1even}
u\left(- x_1,x_2,t\right)&=u(x_1,x_2,t),\\\label{2even}
u\left( x_1,-x_2,t\right)&=u(x_1,x_2,t),\\
  \label{5.3a}
u\left( \frac{ \pi}{n_1}-x_1,\frac{ \pi}{m_1}-x_2,t\right)&=u(x_1,x_2,t), \\
\label{5.5}
 u\left( x_1,x_2,t\right)&=u(x_2,x_1,t).
\end{align}

Set
\begin{align*}
& (Y \cap E)_{n_1,m_1}=\{ u\in Y \cap E\,|\,   u \text{ satisfies } \re{5.5a}, \eqref{5.1}, \eqref{5.2}, \eqref{1even}, \eqref{2even} \text{ and } \eqref{5.3a} \},\\
& (Y \cap E)_{n_1,0}=\{ u\in Y \cap E\,|\,  u \text{ satisfies } \re{5.5a}, \eqref{5.1}, \eqref{1even} \text{ and }  u \text{ is independent of }\\
&\mm\mm\mm x_2 \},\\
& (Y \cap E)_{n_1, +}=\{ u \in Y \cap E\,|\,   u  \text{ satisfies } \re{5.5a}, \eqref{5.1}, \eqref{5.2} \text{ with } m_1=n_1, \eqref{1even}, \eqref{2even} \\
&\mm\mm\mm\text{ and } \eqref{5.5} \},\\
& W_{n_1,m_1},  W_{n_1,0} \text{ and } W_{n_1, +}  \text{ analogously}.
\end{align*}
We shall show that $G(\cdot,\gamma)$ preserves periodicity and symmetry in the following lemmas.

\begin{lem}\label{prop} If $ \Phi\in C^{1+\frac{\alpha}{3}}{([0,\infty))}$ is T periodic and $\tilde{\rho}\in Y \cap E$ satisfies \re{5.5a},  then $G(\cdot,\gamma)$
also satisfies \re{5.5a}. Precisely, the solution $(\sigma, p)$ of system \re{8.2}--\re{8.5}
satisfies
\begin{align}\label{eqnprop}
    \sigma(x_1,x_2,y, t+T)=\sigma(x_1,x_2,y,t),\mm  p(x_1,x_2,y,t+T)=p(x_1,x_2,y,t).
\end{align}
\end{lem}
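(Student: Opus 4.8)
The plan is to exploit the fact that the only time-dependent data in the system \re{8.2}--\re{8.5} is the boundary value $\Phi(t)$ on $\Gamma_{\tilde\rho(t)}$ and the domain itself through $\tilde\rho$. Since $\Phi$ is $T$-periodic by hypothesis and $\tilde\rho$ satisfies \re{5.5a}, the domain ${U}_{\tilde\rho}$ at time $t+T$ coincides with the domain at time $t$, so the elliptic problems \re{8.2}--\re{8.3} and \re{8.4}--\re{8.5} posed at time $t+T$ are \emph{literally the same boundary-value problem} as at time $t$. First I would observe that for the quasi-steady-state ($\lambda=0$) system, at each fixed time $t$ the pair $(\sigma(\cdot,\cdot,\cdot,t),p(\cdot,\cdot,\cdot,t))$ is determined by solving a sequence of two linear elliptic Dirichlet/Neumann problems on the fixed spatial domain $\Omega_{\rho_*(t)+\tilde\rho(\cdot,\cdot,t)}$: first $\sigma$ from \re{8.2}--\re{8.3}, then $p$ from \re{8.4}--\re{8.5} with $\sigma$ as a source. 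There is no $t$-derivative coupling the time slices, so the solution at time $t$ depends on $t$ only through $\Phi(t)$ and $\tilde\rho(\cdot,\cdot,t)$.

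The key step is then a uniqueness argument. Define $\tilde\sigma_T(x_1,x_2,y,t):=\sigma(x_1,x_2,y,t+T)$ and $\tilde p_T(x_1,x_2,y,t):=p(x_1,x_2,y,t+T)$. Using $\Phi(t+T)=\Phi(t)$ and $\tilde\rho(x_1,x_2,t+T)=\tilde\rho(x_1,x_2,t)$ (which gives $\Gamma_{\tilde\rho(t+T)}=\Gamma_{\tilde\rho(t)}$ and ${U}_{\tilde\rho}$ unchanged under $t\mapsto t+T$), one checks directly that $(\tilde\sigma_T,\tilde p_T)$ satisfies exactly the same system \re{8.2}--\re{8.5} as $(\sigma,p)$. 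Since the linear elliptic problem for $\sigma$ (with $-\Delta\sigma+\sigma=0$, the zero-flux condition on $\Gamma_0$ and Dirichlet data $\Phi(t)$ on $\Gamma_{\tilde\rho(t)}$) has a unique solution by standard elliptic theory, we get $\tilde\sigma_T\equiv\sigma$. Plugging this common $\sigma$ into \re{8.4}--\re{8.5}, the problem for $p$ (with $-\Delta p=\mu(\sigma-\tilde\sigma)$, zero-flux on $\Gamma_0$, Dirichlet data $\gamma\kappa$ on $\Gamma_{\tilde\rho(t)}$ — and here $\kappa=\kappa(\rho_*+\tilde\rho)$ is itself $T$-periodic in $t$ because $\rho_*$ and $\tilde\rho$ are) again has a unique solution, so $\tilde p_T\equiv p$. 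This establishes \re{eqnprop}.

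Finally, to conclude that $G(\cdot,\gamma)$ itself satisfies \re{5.5a}, I would feed \re{eqnprop} into the representation \re{GGG3}: $G(\tilde\rho,\gamma)=\frac{d\rho_*}{dt}+\frac{d\tilde\rho}{dt}+(\nabla p,\bm{v}_{\rho_*(t)+\tilde\rho(t)})|_{\Gamma_{\tilde\rho(t)}}$. The term $\frac{d\rho_*}{dt}$ is $T$-periodic since $\rho_*$ is the $T$-periodic solution from Theorem \ref{thm:1.2}; $\frac{d\tilde\rho}{dt}$ is $T$-periodic because $\tilde\rho$ satisfies \re{5.5a} (differentiate the periodicity relation in $t$); and the boundary term is $T$-periodic because $p$ is $T$-periodic by \re{eqnprop}, $\nabla p$ evaluated on $\Gamma_{\tilde\rho(t)}$ inherits this, and $\bm{v}_{\rho_*(t)+\tilde\rho(t)}=(-\nabla(\rho_*+\tilde\rho),1)$ is $T$-periodic. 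Hence $G(\tilde\rho,\gamma)(x_1,x_2,t+T)=G(\tilde\rho,\gamma)(x_1,x_2,t)$.

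The main obstacle is not conceptual but bookkeeping: one must be careful that \emph{all} the data entering the two elliptic problems — not just $\Phi$, but the domain $\Omega_{\rho_*(t)+\tilde\rho(t)}$, the curvature $\kappa$ on the free boundary, and the source $\mu(\sigma-\tilde\sigma)$ — are genuinely $T$-periodic in $t$, which ultimately reduces to the $T$-periodicity of $\rho_*$ (Theorem \ref{thm:1.2}), the hypothesis on $\tilde\rho$, and the hypothesis on $\Phi$; and that the uniqueness statements for the two scalar elliptic problems on a fixed $C^{4+\alpha}$ domain are applicable in the little-Hölder setting used here. Once this is in place the argument is a direct uniqueness-plus-substitution.
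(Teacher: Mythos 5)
Your proposal is correct and takes essentially the same approach as the paper: define the time-shifted functions $\check{\sigma}(x_1,x_2,y,t)=\sigma(x_1,x_2,y,t+T)$, $\check{p}(x_1,x_2,y,t)=p(x_1,x_2,y,t+T)$, observe that $T$-periodicity of $\Phi$, $\rho_*$, and $\tilde{\rho}$ makes the spatial domain and all boundary/source data at $t+T$ identical to those at $t$, and invoke uniqueness of the two elliptic boundary-value problems. Your concluding paragraph, which substitutes \re{eqnprop} into \re{GGG3} to deduce the $T$-periodicity of $G(\tilde{\rho},\gamma)$, is a welcome explicit spelling-out of a step the paper leaves implicit, but it does not constitute a different route.
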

\begin{proof}
Replacing $t$ with $t+T$  in  system \eqref{8.2}--\eqref{8.5}, we obtain
\begin{eqnarray}\nonumber
&&\label{bar1}
\left \{
\begin{split}
&-\Delta \sigma(x_1,x_2,y, t+T) +\sigma(x_1,x_2,y, t+T) = 0  \mm\;\;\,
\,&\text{in }{\Theta}_{\tilde{\rho}},\\
&\displaystyle\frac{\partial \sigma}{\partial y}(x_1,x_2,0,t+T) =0,\mm\sigma(x_1,x_2, \rho_*(t+T)+\tilde{\rho}(x_1,x_2,t+T), t+T) = \Phi(t+T),\\
&-\Delta p(x_1,x_2,y, t+T) = \mu(\sigma(x_1,x_2,y, t+T)-\tilde{\sigma})\mm\; &\text{in }\Theta_{\tilde{\rho}},\\
&\displaystyle\frac{\partial p}{\partial y}(x_1,x_2,0,t+T) =0,\mm p(x_1,x_2, \rho_*(t+T)+\tilde{\rho}(x_1,x_2,t+T), t+T) = \gamma\kappa
,
\end{split}
\right.
\end{eqnarray}
where $\Theta_{\tilde{\rho}}$ is a domain with boundary $\Gamma_0\cup \tilde{\Gamma}_{\tilde{\rho}(t)}$ and $ \tilde{\Gamma}_{\tilde{\rho}(t)}=\{y\,|\, \, y=\rho_*(t+T)+\tilde{\rho}(x_1, x_2, t+T)
\} $.
The fact that $\tilde{\rho}$ satisfied \re{5.5a} and $ \rho_*$ is T periodic  imply $\Theta_{\tilde{\rho}} \equiv \Omega_{\tilde{\rho}}$.
Define $\check{\sigma}(x_1, x_2,y,t) = \sigma(x_1,x_2,y, t+T)$ and $\check{p}(x_1, x_2,y,t) = p(x_1,x_2,y, t+T)$.
Applying \re{5.5a} and that $ \Phi$ is T periodic, we obtain that $\check{\sigma}$ satisfies \eqref{8.2}--\eqref{8.3} and $\check{p}$ satisfies \eqref{8.4}--\eqref{8.5}. The uniqueness of the solution for \eqref{8.2}--\eqref{8.5} implies
$\check{\sigma}(x_1, x_2,y,t)= \sigma(x_1,x_2,y,t)$ and $\check{p}(x_1, x_2,y,t)= p(x_1,x_2,y,t)$, i.e., $\eqref{eqnprop}$ holds.
We complete the proof.
\end{proof}
Similarly, we obtain the following lemmas.

\begin{lem}\label{prop1}
If $ \Phi\in C^{1+\frac{\alpha}{3}}{([0,\infty))}$ is T periodic and $\tilde{\rho}\in Y \cap E$ satisfies \re{5.1},
then $G(\cdot,\gamma)$ also satisfies \re{5.1}.  Precisely,  the solution $(\sigma, p)$ of system \re{8.2}--\re{8.5} satisfies
\begin{align}\nonumber
    \sigma(\frac{2 \pi}{n_1}+x_1,x_2,y,t )=\sigma(x_1,x_2,y,t),\mm  p(\frac{ 2\pi}{n_1}+x_1,x_2,y,t )=p(x_1,x_2,y,t).
\end{align}
\end{lem}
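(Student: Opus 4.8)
The plan is to mirror the proof of Lemma \ref{prop}, with the time translation $t\mapsto t+T$ replaced by the spatial translation $x_1\mapsto x_1+\frac{2\pi}{n_1}$. The guiding principle is unchanged: problem \re{8.2}--\re{8.5} has a unique solution, so every symmetry of its domain and data is automatically inherited by $(\sigma,p)$, and then by $G(\tilde{\rho},\gamma)$ via the third equivalent form \re{GGG3}.

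First I would substitute $x_1\mapsto x_1+\frac{2\pi}{n_1}$ into \re{8.2}--\re{8.5}. The upper boundary $\Gamma_{\tilde{\rho}(t)}=\{y=\rho_*(t)+\tilde{\rho}(x_1,x_2,t)\}$ is invariant under this shift because $\rho_*$ depends only on $t$ and $\tilde{\rho}$ obeys \re{5.1}; hence the domain $U_{\tilde{\rho}}$ is unchanged, while $\Gamma_0=\{y=0\}$ is trivially unchanged. The data $\Phi(t)$ and the constant $\tilde{\sigma}$ do not involve $x_1$, and the mean curvature $\kappa$ on $\Gamma_{\tilde{\rho}(t)}$ is a local differential expression in $\rho_*+\tilde{\rho}$, so it is also $\frac{2\pi}{n_1}$-periodic in $x_1$ by \re{5.1}. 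Therefore, writing $\check{\sigma}(x_1,x_2,y,t):=\sigma(x_1+\frac{2\pi}{n_1},x_2,y,t)$ and $\check{p}(x_1,x_2,y,t):=p(x_1+\frac{2\pi}{n_1},x_2,y,t)$, one checks directly that $\check{\sigma}$ solves \re{8.2}--\re{8.3} and $\check{p}$ solves \re{8.4}--\re{8.5}. The uniqueness of the solution of \re{8.2}--\re{8.5} then forces $\check{\sigma}\equiv\sigma$ and $\check{p}\equiv p$, which is exactly the asserted periodicity of $\sigma$ and $p$ in $x_1$.

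Finally I would feed this into \re{GGG3}: since $\frac{d\rho_*}{dt}$ is independent of $x_1$, $\frac{d\tilde{\rho}}{dt}$ inherits \re{5.1} from $\tilde{\rho}$, and the term $(\nabla p,\bm{v}_{\rho_*(t)+\tilde{\rho}(t)})\big|_{\Gamma_{\tilde{\rho}(t)}}$ inherits it from $p$ and $\tilde{\rho}$, it follows that $G(\tilde{\rho},\gamma)$ satisfies \re{5.1}. No step presents a genuine obstacle; the only point that deserves care is the invariance of $U_{\tilde{\rho}}$ and of $\kappa$ under the shift, and both reduce to the fact that $\rho_*$ is constant in $x_1$ together with hypothesis \re{5.1} on $\tilde{\rho}$. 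The remaining verifications are structurally identical to those already carried out for Lemma \ref{prop}.
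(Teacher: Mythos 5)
Your proof is correct and follows the same route the paper intends: the paper proves Lemma \ref{prop} by the shift-and-uniqueness argument and then states that Lemmas \ref{prop1}--\ref{prop5} follow "similarly," which is exactly your adaptation with $t\mapsto t+T$ replaced by $x_1\mapsto x_1+\frac{2\pi}{n_1}$. Your extra care about the invariance of $U_{\tilde{\rho}}$ and of $\kappa$, and the final step through \re{GGG3}, matches the structure of the paper's proof of Lemma \ref{prop}.
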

\begin{lem}\label{props2}
If $ \Phi\in C^{1+\frac{\alpha}{3}}{([0,\infty))}$ is T periodic and $\tilde{\rho}\in Y \cap E$ satisfies \re{5.2},
then $G(\cdot,\gamma)$
also satisfies \re{5.2}. Precisely,  the solution $(\sigma, p)$ of   system \re{8.2}--\re{8.5} satisfies
\begin{align*}
  \sigma(x_1, \frac{2\pi}{m_1}{ +}x_2,y,t)=\sigma(x_1,x_2,y,t),\mm p(x_1, \frac{2\pi}{m_1}{ +}x_2,y,t)=p(x_1,x_2,y,t).
\end{align*}

\end{lem}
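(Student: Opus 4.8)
The plan is to repeat the argument of Lemma~\ref{prop}, with the time shift $t\mapsto t+T$ replaced by the spatial shift $x_2\mapsto \frac{2\pi}{m_1}+x_2$. First I would substitute $x_2$ by $\frac{2\pi}{m_1}+x_2$ throughout the system \re{8.2}--\re{8.5}. Since $\rho_*$ is independent of $x_2$ and $\tilde{\rho}$ satisfies \re{5.2}, the free boundary $y=\rho_*(t)+\tilde{\rho}(x_1,x_2,t)$ is invariant under this translation, and the fixed boundary $\Gamma_0=\{y=0\}$ is trivially invariant; hence the shifted functions $\check{\sigma}(x_1,x_2,y,t):=\sigma(x_1,\frac{2\pi}{m_1}+x_2,y,t)$ and $\check{p}(x_1,x_2,y,t):=p(x_1,\frac{2\pi}{m_1}+x_2,y,t)$ are defined on the same space--time region ${U}_{\tilde{\rho}}$ as $\sigma$ and $p$.

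Next I would check that $(\check{\sigma},\check{p})$ again solves \re{8.2}--\re{8.5}. The interior equations \re{8.2} and \re{8.4} are translation invariant, so $\check{\sigma}$ and $\check{p}$ satisfy them on ${U}_{\tilde{\rho}}$. On $\Gamma_{\tilde{\rho}(t)}$ the datum $\Phi(t)$ does not depend on $x_2$, and the mean curvature $\kappa$ at the shifted point equals the curvature at the original point: $\kappa$ is built from the first and second $x$-derivatives of the height function $\rho_*(t)+\tilde{\rho}(\cdot,\cdot,t)$, and all of these are unchanged under $x_2\mapsto\frac{2\pi}{m_1}+x_2$ by \re{5.2} (a translation being an isometry). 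Thus $\check{\sigma}$ satisfies \re{8.2}--\re{8.3} and $\check{p}$ satisfies \re{8.4}--\re{8.5}. By the uniqueness of the solution of \re{8.2}--\re{8.5} (solve the linear elliptic problem for $\sigma$ first, then the one for $p$), we obtain $\check{\sigma}\equiv\sigma$ and $\check{p}\equiv p$, which is precisely the asserted $\frac{2\pi}{m_1}$-periodicity in $x_2$.

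Finally, inserting this periodicity into the third equivalent form \re{GGG3} of $G$ and noting that both the outward normal $\bm{v}_{\rho_*(t)+\tilde{\rho}(t)}$ and the trace on $\Gamma_{\tilde{\rho}(t)}$ are unchanged under the $x_2$-shift, I would conclude that $G(\tilde{\rho},\gamma)$ satisfies \re{5.2}. There is essentially no genuine obstacle here: the only point deserving a line of verification is the invariance of the mean curvature $\kappa$ under the translation, which is immediate, and the rest is a verbatim transcription of the proof of Lemma~\ref{prop}. (Lemma~\ref{prop1} is handled by the identical mechanism with the shift taken in the $x_1$-variable.)
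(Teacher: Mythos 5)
Your proposal is correct and is essentially the paper's own argument: the paper proves Lemma~\ref{prop} in detail by the shift-then-uniqueness device, and then dispatches Lemmas~\ref{prop1}--\ref{prop5} with ``Similarly, we obtain the following lemmas,'' leaving exactly the transcription you carried out. The one point worth making explicit, which you did, is the invariance of the curvature term $\gamma\kappa$ under the $x_2$-translation; otherwise the steps (translation-invariance of the interior equations, invariance of the domain $U_{\tilde{\rho}}$ and of the boundary data, uniqueness of the elliptic solve, and finally reading off the conclusion from \re{GGG3}) match the paper verbatim.
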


\begin{lem}\label{props3}
If $ \Phi\in C^{1+\frac{\alpha}{3}}{([0,\infty))}$ is T periodic and $\tilde{\rho}\in Y \cap E$ satisfies \re{1even},
then $G(\cdot,\gamma)$
also satisfies \re{1even}. Precisely,  the solution $(\sigma, p)$ of   system \re{8.2}--\re{8.5} satisfies
\begin{align*}
    \sigma(-x_1,x_2,y,t )=\sigma(x_1,x_2,y,t),\mm  p(-x_1,x_2,y,t )=p(x_1,x_2,y,t).
\end{align*}
\end{lem}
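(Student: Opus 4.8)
The plan is to imitate the argument used for Lemma~\ref{prop}, with the time shift $t\mapsto t+T$ replaced by the spatial reflection $x_1\mapsto -x_1$. First I would substitute $x_1\mapsto -x_1$ throughout \eqref{8.2}--\eqref{8.5}. Since $\tilde\rho$ satisfies \re{1even} and $\rho_*$ is independent of the spatial variables, the free boundary $\Gamma_{\tilde\rho(t)}=\{y=\rho_*(t)+\tilde\rho(x_1,x_2,t)\}$, and hence the cylinder $U_{\tilde\rho}$, is carried onto itself by this reflection; so the reflected problem is again posed on $U_{\tilde\rho}$.

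Next I would set $\check\sigma(x_1,x_2,y,t):=\sigma(-x_1,x_2,y,t)$ and $\check p(x_1,x_2,y,t):=p(-x_1,x_2,y,t)$ and verify that $(\check\sigma,\check p)$ solves exactly \eqref{8.2}--\eqref{8.5}. The Laplacian $\Delta=\partial_{x_1}^2+\partial_{x_2}^2+\partial_y^2$ is invariant under $x_1\mapsto -x_1$, so \eqref{8.2} and \eqref{8.4} are preserved; the conditions on $\Gamma_0$ are untouched; and the Dirichlet datum $\Phi(t)$ on $\Gamma_{\tilde\rho(t)}$ is independent of $x_1$. The only point requiring a short computation is that the mean curvature $\kappa$ of $\Gamma_{\tilde\rho(t)}$ is even in $x_1$: in the expression $P(\hat\rho)\hat\rho$ the first-order factors $\hat\rho_{x_1}$ occur either squared, as in $(1+\hat\rho_{x_1}^2)$, or paired as $\hat\rho_{x_1}\hat\rho_{x_2}\hat\rho_{x_1x_2}$, and since the reflection sends $\hat\rho_{x_1}\mapsto-\hat\rho_{x_1}$ and $\hat\rho_{x_1x_2}\mapsto-\hat\rho_{x_1x_2}$ while leaving $\hat\rho,\hat\rho_{x_2},\hat\rho_{x_1x_1},\hat\rho_{x_2x_2}$ unchanged, every surviving term is invariant. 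Hence $\kappa(-x_1,x_2,t)=\kappa(x_1,x_2,t)$ and $(\check\sigma,\check p)$ indeed satisfies \eqref{8.2}--\eqref{8.5}.

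With this in hand, the uniqueness for \eqref{8.2}--\eqref{8.5} already invoked in Lemma~\ref{prop} forces $\check\sigma=\sigma$ and $\check p=p$, i.e. $\sigma$ and $p$ are even in $x_1$. Finally I would feed this back into the free-boundary form \eqref{GGG3}: $\frac{d\rho_*}{dt}$ and $\frac{d\tilde\rho}{dt}$ are even in $x_1$ by hypothesis, and in the trace term $(\nabla p,\bm{v}_{\rho_*(t)+\tilde\rho(t)})\big|_{\Gamma_{\tilde\rho(t)}}$ the $x_1$-component pairs the odd function $p_{x_1}$ with the odd function $-\tilde\rho_{x_1}$, while the $x_2$- and $y$-components pair even functions with even functions; thus the whole expression is even in $x_1$, which is precisely \re{1even} for $G(\tilde\rho,\gamma)$.

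I expect the only (mild) obstacle to be the bookkeeping of parities in the curvature operator $P(\hat\rho)$ and in the normal-derivative trace term of \eqref{GGG3}; everything else is a verbatim transcription of the proof of Lemma~\ref{prop}, the uniqueness step being identical. The same scheme, with $x_1\mapsto -x_1$ replaced by the relevant isometry of $D$, also underlies the companion statements (the translations in Lemmas~\ref{prop1}--\ref{props2} and, mutatis mutandis, the symmetries \re{2even}, \re{5.3a} and \re{5.5}).
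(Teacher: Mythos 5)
Your proposal is correct and follows exactly the scheme the paper intends: Lemma~\ref{props3} is one of the cases the authors dispose of with ``Similarly, we obtain the following lemmas'' after Lemma~\ref{prop}, and your proof is the natural transcription of the $t\mapsto t+T$ argument to the reflection $x_1\mapsto -x_1$, with the uniqueness step identical. The one step that is not a verbatim copy --- the parity check on the mean-curvature term $P(\hat\rho)\hat\rho$, where $\hat\rho_{x_1}$ and $\hat\rho_{x_1x_2}$ flip sign but always occur in even combinations --- is correctly identified and verified, and so is the evenness of the trace term in \eqref{GGG3} via the pairing of the odd $p_{x_1}$ with the odd $-\tilde\rho_{x_1}$; these are precisely the points the paper leaves implicit.
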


\begin{lem}\label{props4}
If $ \Phi\in C^{1+\frac{\alpha}{3}}{([0,\infty))}$ is T periodic and $\tilde{\rho}\in Y \cap E$ satisfies \re{2even},
then $G(\cdot,\gamma)$
also satisfies \re{2even}. Precisely,   the solution $(\sigma, p)$ of   system \re{8.2}--\re{8.5} satisfies
\begin{align*}
  \sigma(x_1, -x_2,y,t)=\sigma(x_1,x_2,y,t),\mm p(x_1, -x_2,y,t)=p(x_1,x_2,y,t).
\end{align*}
\end{lem}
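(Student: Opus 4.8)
The plan is to run the argument of Lemma \ref{prop} almost verbatim, with the time translation $t\mapsto t+T$ replaced by the reflection $x_2\mapsto-x_2$. First I would substitute $x_2$ by $-x_2$ throughout \re{8.2}--\re{8.5}. The Laplacian $\Delta=\partial_{x_1}^2+\partial_{x_2}^2+\partial_{y}^2$ commutes with this reflection, so \re{8.2} and \re{8.4} keep their form; the Neumann condition on $\Gamma_0$ is untouched; and since $\Phi(t)$ carries no $x_2$-dependence, the boundary value $\sigma|_{\Gamma_{\tilde{\rho}(t)}}=\Phi(t)$ is unchanged. Because $\tilde{\rho}\in Y\cap E$ satisfies \re{2even} and $\rho_*$ depends only on $t$, the function $\hat{\rho}(x_1,x_2,t):=\rho_*(t)+\tilde{\rho}(x_1,x_2,t)$ is even in $x_2$, so the domain $U_{\tilde{\rho}}$ and its free boundary $\Gamma_{\tilde{\rho}(t)}$ are invariant under $x_2\mapsto-x_2$; the reflected problem thus lands back in exactly the same system.

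The only step requiring a computation is the curvature condition \re{8.5}. I would check from the explicit formula for $P(\hat{\rho})$ that $\kappa(\hat{\rho})=P(\hat{\rho})\hat{\rho}$ is even in $x_2$ whenever $\hat{\rho}$ is: in that case $\hat{\rho}_{x_1},\hat{\rho}_{x_1x_1},\hat{\rho}_{x_2x_2}$ are even and $\hat{\rho}_{x_2},\hat{\rho}_{x_1x_2}$ are odd, so the three summands $(1+\hat{\rho}_{x_2}^2)\hat{\rho}_{x_1x_1}$, $(1+\hat{\rho}_{x_1}^2)\hat{\rho}_{x_2x_2}$ and $\hat{\rho}_{x_1}\hat{\rho}_{x_2}\hat{\rho}_{x_1x_2}$ in the numerator, as well as the denominator $\big(1+\hat{\rho}_{x_1}^2+\hat{\rho}_{x_2}^2\big)^{3/2}$, are all even in $x_2$; hence $\kappa$ is even in $x_2$ and the condition $p|_{\Gamma_{\tilde{\rho}(t)}}=\gamma\kappa$ is preserved.

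Finally, setting $\check{\sigma}(x_1,x_2,y,t):=\sigma(x_1,-x_2,y,t)$ and $\check{p}(x_1,x_2,y,t):=p(x_1,-x_2,y,t)$, the previous steps show that $(\check{\sigma},\check{p})$ solves \re{8.2}--\re{8.5}, so uniqueness of its solution forces $\check{\sigma}=\sigma$ and $\check{p}=p$, which is exactly the claimed evenness of $(\sigma,p)$ in $x_2$. Plugging this into the representation \re{GGG3} of $G$ then finishes the proof: $\frac{d\rho_*}{dt}$ is $x_2$-independent, $\frac{d\tilde{\rho}}{dt}$ is even in $x_2$, and in $\big(\nabla p,\bm{v}_{\rho_*(t)+\tilde{\rho}(t)}\big)\big|_{\Gamma_{\tilde{\rho}(t)}}$ the parities of $(p_{x_1},p_{x_2},p_{y})$ and of $(-\tilde{\rho}_{x_1},-\tilde{\rho}_{x_2},1)$ are such that each term of the inner product, hence the whole expression after restriction to $y=\hat{\rho}$, is even in $x_2$; so $G(\tilde{\rho},\gamma)$ again satisfies \re{2even}. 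The main (and essentially the only) obstacle is the parity check for the curvature operator; everything else is a direct transcription of the proof of Lemma \ref{prop}.
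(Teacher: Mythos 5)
Your proof is correct and follows the same strategy the paper uses for Lemma~\ref{prop}, which the paper then invokes ``similarly'' for Lemmas~\ref{prop1}--\ref{prop5}: reflect the system, verify invariance of the domain and boundary data (with the necessary parity check for the curvature operator, which you correctly carry out), and conclude by uniqueness. Nothing further to add.
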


\begin{lem}\label{props4a}
If $ \Phi\in C^{1+\frac{\alpha}{3}}{([0,\infty))}$ is T periodic and $\tilde{\rho}\in Y \cap E$ satisfies \re{5.3a},
then $G(\cdot,\gamma)$
also satisfies \re{5.3a}. Precisely,   the solution $(\sigma, p)$ of   system \re{8.2}--\re{8.5} satisfies
\begin{align*}
    \sigma(\frac{ \pi}{n_1}-x_1,\frac{ \pi}{m_1}-x_2,y,t )=\sigma(x_1,x_2,y,t),\mm  p(\frac{ \pi}{n_1}-x_1,\frac{ \pi}{m_1}-x_2,y,t )=p(x_1,x_2,y,t).
\end{align*}
\end{lem}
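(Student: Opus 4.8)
The plan is to imitate the proof of Lemma~\ref{prop}, replacing the time shift $t\mapsto t+T$ by the spatial reflection $R:(x_1,x_2)\mapsto\bigl(\tfrac{\pi}{n_1}-x_1,\tfrac{\pi}{m_1}-x_2\bigr)$, which is a rotation by $\pi$ of $\mathbb R^2$ about $\bigl(\tfrac{\pi}{2n_1},\tfrac{\pi}{2m_1}\bigr)$, hence a Euclidean isometry (its linearization $\mathrm{diag}(-1,-1)$, or $\mathrm{diag}(-1,-1,1)$ in $\mathbb R^3$ keeping $y$, has determinant $+1$). First I would set $\check{\sigma}(x_1,x_2,y,t):=\sigma\bigl(\tfrac{\pi}{n_1}-x_1,\tfrac{\pi}{m_1}-x_2,y,t\bigr)$ and $\check{p}(x_1,x_2,y,t):=p\bigl(\tfrac{\pi}{n_1}-x_1,\tfrac{\pi}{m_1}-x_2,y,t\bigr)$ and show that $(\check\sigma,\check p)$ solves the very same system \re{8.2}--\re{8.5}; uniqueness of the solution then forces $\check\sigma=\sigma$ and $\check p=p$, which is exactly \re{5.3a} for the solution pair, and hence, via Lemma~\ref{2.4ss} and the equivalence of \re{GGG} and \re{GGG3}, for $G(\cdot,\gamma)$.

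The verification splits into three routine points, which I would carry out in order. (a) \emph{Invariance of the domain.} Since $\rho_*$ is independent of $(x_1,x_2)$ and $\tilde\rho$ satisfies \re{5.3a}, the graph function $\rho_*(t)+\tilde\rho(x_1,x_2,t)$ is fixed by $R$, so $U_{\tilde\rho}$ together with $\Gamma_0$ and $\Gamma_{\tilde\rho(t)}$ is mapped onto itself. (b) \emph{Invariance of the elliptic operators.} Under $x_i\mapsto c-x_i$ one has $\partial_{x_i}\mapsto-\partial_{x_i}$ but $\partial_{x_i}^2\mapsto\partial_{x_i}^2$, and $\partial_y$ is untouched, so $\Delta$ is unchanged; thus $-\Delta\check\sigma+\check\sigma=0$ and $-\Delta\check p=\mu(\check\sigma-\tilde\sigma)$ in $U_{\tilde\rho}$, and the Neumann condition $\partial_y(\cdot)=0$ on $\Gamma_0$ is preserved. (c) \emph{Invariance of the boundary data.} The Dirichlet datum $\Phi(t)$ is spatially constant, so $\check\sigma|_{\Gamma_{\tilde\rho(t)}}=\Phi(t)$; for $p$ I would use that $\kappa=P(\hat\rho)\hat\rho$ with $\hat\rho=\rho_*(t)+\tilde\rho(\cdot,t)$, whose defining rational expression is even under the joint sign change $(\hat\rho_{x_1},\hat\rho_{x_2})\mapsto(-\hat\rho_{x_1},-\hat\rho_{x_2})$ — the first-order quantities enter only as $\hat\rho_{x_1}^2$, $\hat\rho_{x_2}^2$, $\hat\rho_{x_1}\hat\rho_{x_2}$ — while the second-order quantities $\hat\rho_{x_1x_1}$, $\hat\rho_{x_2x_2}$, $\hat\rho_{x_1x_2}$ are each invariant under $R$; combined with the $R$-invariance of $\hat\rho$ this gives $\kappa\bigl(\tfrac{\pi}{n_1}-x_1,\tfrac{\pi}{m_1}-x_2,t\bigr)=\kappa(x_1,x_2,t)$, hence $\check p|_{\Gamma_{\tilde\rho(t)}}=\gamma\kappa$.

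Having established (a)--(c), I would invoke uniqueness for \re{8.2}--\re{8.3} to obtain $\check\sigma=\sigma$, substitute this into the right-hand side of the equation for $\check p$, and invoke uniqueness for \re{8.4}--\re{8.5} to obtain $\check p=p$, which is the asserted identity. The only genuinely delicate (though still mild) point is step (c) for the curvature: one must remember that $\kappa$ does \emph{not} transform like the scalar $\sigma$, and instead check directly that the explicit formula for $P(\hat\rho)$ is invariant under $R$ because the isometry $R$ preserves mean curvature. Everything else is a direct transcription of the proof of Lemma~\ref{prop}; in particular, unlike in Lemma~\ref{prop}, the $T$-periodicity of $\Phi$ and $\rho_*$ plays no role here, since $R$ acts only on the spatial variables.
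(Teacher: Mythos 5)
Your proposal is correct and is exactly the argument the paper intends: the paper gives an explicit proof only for Lemma~\ref{prop} (time translation $t\mapsto t+T$) and then states that Lemmas~\ref{prop1}--\ref{prop5} follow ``similarly,'' and your proof is the direct transcription with the point reflection $R:(x_1,x_2)\mapsto(\tfrac{\pi}{n_1}-x_1,\tfrac{\pi}{m_1}-x_2)$ in place of the time shift, followed by uniqueness. The only genuinely new ingredient relative to Lemma~\ref{prop} is the curvature check in (c), and your verification --- the first derivatives of $\hat\rho$ flip sign under $R$ but appear only in the combinations $\hat\rho_{x_1}^2$, $\hat\rho_{x_2}^2$, $\hat\rho_{x_1}\hat\rho_{x_2}$, while all second derivatives are $R$-invariant --- is correct (one minor slip: the implication for $G(\cdot,\gamma)$ goes through \re{GGG3} directly, not through Lemma~\ref{2.4ss}, which concerns the derivative $G_{\tilde\rho}(0,\gamma)$ rather than $G$ itself, but this does not affect the substance).
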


\begin{lem}\label{prop5}
If $ \Phi\in C^{1+\frac{\alpha}{3}}{([0,\infty))}$ is T periodic and $\tilde{\rho}\in Y \cap E$ satisfies \re{5.5},
then $G(\cdot,\gamma)$
also satisfies \re{5.5}. Precisely,   the solution $(\sigma, p)$ of  system \re{8.2}--\re{8.5} satisfies
\begin{align*}
    \sigma(x_1,x_2,y,t )=\sigma(x_2,x_1,y,t),\mm  p(x_1,x_2,y,t )=p(x_2,x_1,y,t).
\end{align*}
\end{lem}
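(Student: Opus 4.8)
The plan is to mirror the proof of Lemma \ref{prop}, replacing the time shift $t\mapsto t+T$ by the spatial interchange $(x_1,x_2)\mapsto(x_2,x_1)$ and then invoking the uniqueness of solutions of \eqref{8.2}--\eqref{8.5}. First I would set $\check{\sigma}(x_1,x_2,y,t):=\sigma(x_2,x_1,y,t)$ and $\check{p}(x_1,x_2,y,t):=p(x_2,x_1,y,t)$. Because $\tilde{\rho}$ satisfies \eqref{5.5} and $\rho_*$ is independent of the spatial variables, we have $\rho_*(t)+\tilde{\rho}(x_2,x_1,t)=\rho_*(t)+\tilde{\rho}(x_1,x_2,t)$, so the free boundary $\Gamma_{\tilde{\rho}(t)}$, the fixed boundary $\Gamma_0=\{y=0\}$, and the domain $U_{\tilde{\rho}}$ are all invariant under the interchange of $x_1$ and $x_2$.

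Next I would check that $(\check{\sigma},\check{p})$ solves the same system \eqref{8.2}--\eqref{8.5}. The Laplacian $\Delta=\partial_{x_1}^2+\partial_{x_2}^2+\partial_y^2$ commutes with the swap of $x_1$ and $x_2$, and the Neumann conditions on $\Gamma_0$ involve only $\partial_y$, hence are preserved. The Dirichlet data are preserved as well: $\Phi(t)$ does not depend on $(x_1,x_2)$, and the boundary value $\gamma\kappa$ with $\kappa=P(\rho_*+\tilde{\rho})(\rho_*+\tilde{\rho})$ satisfies $\kappa(x_2,x_1,t)=\kappa(x_1,x_2,t)$ because the operator $P(\hat{\rho})$ of Section \ref{2.1aaaa} is symmetric in $(x_1,x_2)$ whenever $\hat{\rho}$ is. Thus $\check{\sigma}$ satisfies \eqref{8.2}--\eqref{8.3} and $\check{p}$ satisfies \eqref{8.4}--\eqref{8.5}, and uniqueness of the solution of \eqref{8.2}--\eqref{8.5} forces $\check{\sigma}=\sigma$ and $\check{p}=p$, which is exactly the claimed symmetry of $(\sigma,p)$.

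Finally I would transfer this symmetry to $G(\tilde{\rho},\gamma)$ via the representation \eqref{GGG3}, namely $G(\tilde{\rho},\gamma)=\frac{d\rho_*}{dt}+\frac{d\tilde{\rho}}{dt}+(\nabla p,\bm{v}_{\rho_*(t)+\tilde{\rho}(t)})\big|_{\Gamma_{\tilde{\rho}(t)}}$. The term $\frac{d\rho_*}{dt}$ is independent of $(x_1,x_2)$, and $\frac{d\tilde{\rho}}{dt}$ is symmetric since differentiation in $t$ commutes with the swap and $\tilde{\rho}$ satisfies \eqref{5.5}. For the last term, writing $(\nabla p,\bm{v}_{\hat{\rho}})=-p_{x_1}\hat{\rho}_{x_1}-p_{x_2}\hat{\rho}_{x_2}+p_y$ with $\hat{\rho}=\rho_*+\tilde{\rho}$, and using $p_{x_1}(x_1,x_2,y,t)=p_{x_2}(x_2,x_1,y,t)$, $\hat{\rho}_{x_1}(x_1,x_2,t)=\hat{\rho}_{x_2}(x_2,x_1,t)$ together with the symmetry of $p$ just established, one sees that this expression is invariant under $x_1\leftrightarrow x_2$; it is evaluated on the swap-invariant boundary $y=\hat{\rho}$, so it defines a symmetric function. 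Hence $G(\tilde{\rho},\gamma)$ satisfies \eqref{5.5}. I do not anticipate any genuine obstacle; the only point requiring attention is the bookkeeping of the invariance of the curvature term and of the Neumann conditions under the coordinate swap, which is entirely parallel to the argument in Lemma \ref{prop}.
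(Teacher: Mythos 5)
Your proof is correct and follows precisely the template the paper intends: the paper proves Lemma~\ref{prop} in detail and then asserts Lemmas~\ref{prop1}--\ref{prop5} "similarly," and your argument is the faithful adaptation of that proof, replacing the shift $t\mapsto t+T$ by the swap $(x_1,x_2)\mapsto(x_2,x_1)$, checking invariance of the domain, of $\Delta$, of the Neumann condition, and of the Dirichlet data (including the symmetry of the curvature via $P(\hat{\rho})$), and then invoking uniqueness. The extra step of transferring the symmetry of $(\sigma,p)$ to $G$ through \eqref{GGG3} is a useful explicit addition that the paper leaves implicit.
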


Lemmas \ref{prop}--\ref{prop5} imply the following lemma.
\begin{lem}\label{5.7}
$G(\cdot,\gamma)$  maps $ (Y \cap E)_{n_1,m_1}$ into $W_{n_1,m_1}$, or $(Y \cap E)_{n_1,0}$ into $W_{n_1,0}$, or $(Y \cap E)_{n_1,+}$ into $W_{n_1,+}$.
\end{lem}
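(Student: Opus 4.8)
The plan is to obtain the statement as a direct bookkeeping consequence of Lemmas \ref{prop}--\ref{prop5} together with the already established fact that $G$ maps $\tilde{U}\times\tilde{V}$ into $W$. Each of the three spaces $(Y\cap E)_{n_1,m_1}$, $(Y\cap E)_{n_1,0}$, $(Y\cap E)_{n_1,+}$ is, by definition, the set of elements of $Y\cap E$ satisfying a prescribed selection of the single symmetry/periodicity conditions \re{5.5a}, \re{5.1}, \re{5.2}, \re{1even}, \re{2even}, \re{5.3a}, \re{5.5} (plus, in the middle case, independence of $x_2$), and the target spaces $W_{n_1,m_1}$, $W_{n_1,0}$, $W_{n_1,+}$ are cut out of $W$ by the very same conditions. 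So it suffices to run through the conditions one at a time.

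First I would fix $\tilde{\rho}\in(Y\cap E)_{n_1,m_1}$, so that $\tilde{\rho}$ simultaneously satisfies \re{5.5a}, \re{5.1}, \re{5.2}, \re{1even}, \re{2even}, \re{5.3a}. Applying Lemma \ref{prop} gives that $G(\tilde{\rho},\gamma)$ satisfies \re{5.5a}; Lemma \ref{prop1} gives \re{5.1}; Lemma \ref{props2} gives \re{5.2}; Lemmas \ref{props3}, \ref{props4}, \ref{props4a} give \re{1even}, \re{2even}, \re{5.3a}, respectively. What makes this legitimate is that the hypothesis of each of these lemmas involves only the single symmetry it concerns, together with the $T$-periodicity of $\Phi$ and $\rho_*$, which is fixed throughout; imposing several symmetries on $\tilde{\rho}$ at once therefore does not spoil any of the individual conclusions. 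Since $G(\tilde{\rho},\gamma)\in W$ by the earlier lemma, we conclude $G(\tilde{\rho},\gamma)\in W_{n_1,m_1}$. The case $\tilde{\rho}\in(Y\cap E)_{n_1,+}$ is identical, now invoking Lemmas \ref{prop}, \ref{prop1}, \ref{props2} (with $m_1=n_1$), \ref{props3}, \ref{props4} and \ref{prop5}; the swap invariance \re{5.5} is compatible here because $\Phi$ and $\rho_*$ carry no $(x_1,x_2)$-dependence at all and are hence trivially invariant under $x_1\leftrightarrow x_2$.

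For $\tilde{\rho}\in(Y\cap E)_{n_1,0}$ there is one extra ingredient beyond \re{5.5a}, \re{5.1}, \re{1even}: independence of $x_2$. This is not literally one of Lemmas \ref{prop}--\ref{prop5}, but it follows from the same uniqueness argument used to prove Lemma \ref{props2}, with an arbitrary shift replacing $2\pi/m_1$: if $\tilde{\rho}$ is independent of $x_2$, then so are $\rho_*$ and $\kappa(\rho_*+\tilde{\rho})$, the domain ${\Theta}_{\tilde{\rho}}$ is invariant under $x_2\mapsto x_2+c$ for every $c\in\mathbb{R}$, and hence $\check{\sigma}(x_1,x_2,y,t):=\sigma(x_1,x_2+c,y,t)$ and $\check{p}(x_1,x_2,y,t):=p(x_1,x_2+c,y,t)$ again solve \re{8.2}--\re{8.5}; uniqueness forces $\check{\sigma}=\sigma$ and $\check{p}=p$, so $\sigma$, $p$, and therefore $(\nabla p,\bm{v}_{\rho_*+\tilde{\rho}})|_{{\Gamma}_{\tilde{\rho}(t)}}$, are independent of $x_2$, whence so is $G(\tilde{\rho},\gamma)$ by \re{GGG3}. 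Combining this with Lemmas \ref{prop}, \ref{prop1} and \ref{props3} yields $G(\tilde{\rho},\gamma)\in W_{n_1,0}$.

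Since the lemma is essentially an assembly of Lemmas \ref{prop}--\ref{prop5}, there is no serious obstacle. The only points deserving a word of care are the two noted above: that the translation argument of Lemma \ref{props2} applies verbatim with an arbitrary shift, which handles the $x_2$-independent case; and that the $x_1\leftrightarrow x_2$ swap needed for $(Y\cap E)_{n_1,+}$ is automatic because the data $\Phi$ and $\rho_*$ do not depend on the spatial variables.
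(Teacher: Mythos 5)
Your proposal is correct and follows essentially the same route as the paper, which simply states (without a written proof) that Lemmas~\ref{prop}--\ref{prop5} imply Lemma~\ref{5.7}; your assembly of those lemmas condition by condition is exactly what is meant. You also correctly notice the one place where this assertion is not literal, namely that the ``independent of $x_2$'' requirement in $(Y\cap E)_{n_1,0}$ is not one of the listed conditions \re{5.5a}--\re{5.5}, and your supplementary argument — running the uniqueness proof of Lemma~\ref{props2} with an arbitrary shift $c$ in place of $2\pi/m_1$ — is the right way to close that small gap.
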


\subsection{Bifurcations - Proof of Theorem \ref{result1}}
At first, we describe $\mathrm{Ker}[G_{\tilde{\rho}}(0,\gamma)]$.
We find the solution in the following form
\begin{eqnarray*}
&& w(x_{1}, x_{2},y,t) =
w_{n,m}(y,t)\cos(nx_{1})\cos(mx_{2}),\\
&&  q(x_{1}, x_{2},y,t) =
  q_{n,m}(y,t)\cos(nx_{1})\cos( mx_{2}),\\
&& S(x_{1}, x_{2},t) =
  S_{n,m}(t)\cos(nx_{1})\cos(  mx_{2}).
\end{eqnarray*}
  We have only performed the basis expansion in space, not in time. In terms of time, we aim to find a function $S_{n,m}(t)$ and a constant $\gamma_{j}$ such that $S_{n,m}(t)\cos(nx_{1})\cos( mx_{2}) \in \mathrm{Ker}[G_{\tilde{\rho}}(0,\gamma_{j})]$.

By \re{4.9aa} and  \re{4.18}, we obtain that $w_{n,m}$, and $q_{n, m}$ satisfy
\begin{eqnarray}
&&\label{4.20}\left \{
\begin{array}{lr}
- \dis\frac{\partial^2w_{n,m}}{\partial y^2}(y,t) +(n^{2}+m^{2}+1)w_{n,m}(y,t) = 0,\hspace{2em}0<y<\rho_{\ast}(t),\\
\dis\frac{\partial w_{n,m}}{\partial y}(0,t) =0,\m w_{n,m}(\rho_*(t),t) = - \dis\frac{\partial \sigma_*}{\partial y}(\rho_*(t),t)S_{n,m}(t)
,
\end{array}
\right.\\
&&\label{4.21a}\left \{
\begin{array}{lr}
-\dis\frac{\partial^2 q_{n,m}}{\partial y^2}(y,t)+ (n^{2}+m^{2})q_{n,m}(y,t) = \mu w_{n,m}    (y,t),  \hspace{2em}0<y<\rho_{\ast}(t),\\
\dis\frac{\partial q_{n,m}}{\partial y}(0,t) =0,\m  q_{n,m}(\rho_*(t),t) =\dis \frac\gamma 2(n^{2}+m^{2})S_{n,m}(t)-\dis\frac{\partial p_*}{\partial y}(\rho_*(t),t)S_{n,m}(t)
.
\end{array}
\right.
\end{eqnarray}
Solving \re{4.20} and \re{4.21a}, we get
\begin{equation*}
\begin{aligned}
w_{n,m}(y,t) &= -\frac{\partial \sigma_*}{\partial y}(\rho_*(t),t) S_{n,m}(t)\frac{\cosh(\sqrt{1+n^{2}+m^{2}}y)}{\cosh(\sqrt{1+n^{2}+m^{2}}\rho_*(t))}, \\
q_{n,m}(y,t)
&=\mu\displaystyle\frac{\partial  \sigma_*}{\partial y}(\rho_*(t),t)S_{n,m}(t) \frac{\cosh (\sqrt{1+n^{2}+m^{2}}y) }{\cosh (\sqrt{1+n^{2}+m^{2}} \rho_*(t))}\\
&\mm+ \displaystyle \Big[\frac{\gamma}{2}(n^{2}+m^{2})-\mu\displaystyle\frac{\partial  \sigma_*}{\partial y}(\rho_*(t),t)-\displaystyle\frac{\partial  p_*}{\partial y}(\rho_*(t),t)\Big] S_{n,m}(t) \frac{\cosh (\sqrt{n^{2}+m^{2}}y)}{\cosh (\sqrt{n^{2}+m^{2}}\rho_*(t))} .
\end{aligned}
\end{equation*}
Then
\begin{equation*}
\begin{aligned}
\dis\frac{\partial q_{n,m}}{\partial y}(\rho_*(t),t)
&=\mu\displaystyle\frac{\partial  \sigma_*}{\partial y}(\rho_*(t),t)S_{n,m}(t) \sqrt{1+n^{2}+m^{2}}\tanh (\sqrt{1+n^{2}+m^{2}} \rho_*(t))\\
&\mm+ \displaystyle \Big[\frac{\gamma}{2}(n^{2}+m^{2})-\mu\displaystyle\frac{\partial  \sigma_*}{\partial y}(\rho_*(t),t)-\displaystyle\frac{\partial  p_*}{\partial y}(\rho_*(t),t)\Big] S_{n,m}(t)\\
 &\mm\mm\cdot \sqrt{n^{2}+m^{2}} \tanh (\sqrt{n^{2}+m^{2}}\rho_*(t)).
\end{aligned}
\end{equation*}

Combined with \re{2.25a},
$\frac{\partial  \sigma_*}{\partial y}(\rho_*(t),t)=\Phi(t)\tanh(\rho_*(t))$, $\frac{\partial p_*}{\partial y}(\rho_*(t), t)=\mu \tilde{\sigma} \rho_*(t)-\mu \Phi(t) \tanh(\rho_*(t))$ and $\frac{\partial^{2 }p_*}{\partial y^{2}}(\rho_*(t), t)=\mu \tilde{\sigma}-\mu \Phi(t)$, we obtain
\begin{equation}\label{fff}
    \begin{split}
       &G_{\tilde{\rho}}(0,\gamma) [S_{n,m}(t)\cos(nx_{1})\cos(  mx_{2})]\\
       &=
         \frac{d S_{n,m}}{d t}\cos(nx_{1})\cos(m x_{2})\\
     &\mm+S_{n,m}(t)\Big\{  \mu \tilde{\sigma}-\mu \Phi(t)  +\mu\Phi(t)\tanh(\rho_*(t))\sqrt{1+n^{2}+m^{2}}\tanh(\sqrt{1+n^{2}+m^{2}}\rho_*(t))\\
       &\mm-\mu\tilde{\sigma} \rho_*(t)\sqrt{n^{2}+m^{2}}\tanh (\sqrt{n^{2}+m^{2}}\rho_*(t))\\
&\mm+\frac{\gamma(n^{2}+m^{2})^{\frac{3}{2}}}{2}\tanh (\sqrt{n^{2}+m^{2}}\rho_*(t))\Big \} \cos(nx_{1})\cos(m x_{2}).
    \end{split}
\end{equation}

Hence $ G_{\tilde{\rho}}(0,\gamma)[S_{n,m}(t)\cos(nx_{1})\cos(  mx_{2})]=0$ if and only if
\begin{equation}\nonumber
    \begin{split}
       &\frac{d S_{n,m}}{d t}+S_{n,m}(t)\Big\{  \mu\tilde{\sigma}-\mu \Phi(t)  +\mu\Phi(t)\tanh(\rho_*(t))\sqrt{1+n^{2}+m^{2}}\tanh(\sqrt{1+n^{2}+m^{2}}\rho_*(t))\\
       &-\mu\tilde{\sigma} \rho_*(t)\sqrt{n^{2}+m^{2}}\tanh (\sqrt{n^{2}+m^{2}}\rho_*(t))+\frac{\gamma(n^{2}+m^{2})^{\frac{3}{2}}}{2}\tanh (\sqrt{n^{2}+m^{2}}\rho_*(t))\Big \} =0,
    \end{split}
\end{equation}
i.e., $S_{n,m}(t)=e^{-\int_{0}^{t} A_{n^2+m^2,\gamma}(s)ds}$, where $A_{j,\gamma}(t)$ is defined in \eqref{Aj}.
Since $S_{n,m}(t)$ depends only on $n^2+m^2$ and $\gamma$, we denote $S_{n,m}(t)$ by $S_{n^2+m^2, \gamma }(t)$.
By \eqref{1.15}, we have
 \begin{align}\nonumber
&A_{j, \gamma}(t)\\\nonumber
&=\mu\Phi(t) \frac{\tanh(\rho_*(t))}{\rho_*(t)}-\frac{\rho_*'(t)}{\rho_*(t)}
-\mu \Phi(t)  +\mu\Phi(t)\tanh(\rho_*(t)) \sqrt{1+j}\tanh(\sqrt{1+j}\rho_*(t))\\\nonumber
&\mm-\mu \Phi(t)\tanh (\rho_*(t))\sqrt{j}\tanh (\sqrt{j}\rho_*(t))+\rho_*'(t)\sqrt{j}\tanh (\sqrt{j}\rho_*(t))+\frac{\gamma j^{\frac{3}{2}}}{2}\tanh (\sqrt{j}\rho_*(t)).
\end{align}

Since $S_{j, \gamma}(t)$ is $T-$periodic, it follows
\begin{align}\nonumber
 S_{j, \gamma}(t+T)-S_{j, \gamma}(t)
=e^{-\int_{0}^{t} A_{j, \gamma}(s) ds}\big[ e^{-\int_{0}^{T} A_{j, \gamma}(s) ds} -1 \big]=0.
\end{align}
Then $\int_{0}^{T}A_{j, \gamma}(s)ds=0$, which implies
 $\gamma=\gamma_{j}(\rho_*)$ (given by \re{1}).

Next, we discuss the monotonicity of ${ \gamma_{j}}(\rho_*)$.
\begin{lem}\label{lem6.2}
There exists $N_0>0$ such that $ \gamma_{j}(\rho_*)$ is strictly decreasing in $j$ for $j>N_0$.
Also,
\begin{align} \label{2.49}
\lim_{j\rightarrow+\infty}  \gamma_{j}(\rho_*)=0.
\end{align}
\end{lem}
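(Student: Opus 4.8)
Since $j_0$ need not be an integer, I treat $j$ as a continuous parameter: $k_1$ and $k_2$ extend to smooth functions on $(0,\infty)$, and $k_2(j)>0$ there because $\tanh(\sqrt j\,\rho_*(t))>0$, so $\gamma_j(\rho_*)=\mu k_1(j)/k_2(j)$ is a smooth function of the real variable $j>j_0$, with
$$
\frac{d}{dj}\gamma_j(\rho_*)=\mu\,\frac{k_1'(j)\,k_2(j)-k_1(j)\,k_2'(j)}{k_2(j)^2}.
$$
Thus the whole matter reduces to showing that the numerator $k_1'(j)k_2(j)-k_1(j)k_2'(j)$ is negative for all sufficiently large $j$. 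The idea is to compare the orders of growth, as $j\to\infty$, of $k_1,k_2$ and their derivatives, using that $\rho_*$ is continuous, positive and $T$-periodic, so that $m_0:=\min_{[0,T]}\rho_*>0$ and $M_0:=\max_{[0,T]}\rho_*<\infty$.

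First I would establish four estimates. (a) $k_1$ is strictly increasing (as shown above) with the finite limit $k_1(\infty)=\int_0^T\Phi(t)\big(1-\tfrac{\tanh\rho_*(t)}{\rho_*(t)}\big)\,dt>0$; hence $0<k_1(j)\uparrow k_1(\infty)$, so $k_1(j)\ge\tfrac12 k_1(\infty)$ once $j$ is large. (b) Differentiating under the integral sign and applying the mean value theorem to $g_t(x):=\sqrt x\,\tanh(\sqrt x\,\rho_*(t))$, one gets $k_1'(j)=-\int_0^T\Phi(t)\tanh(\rho_*(t))\,g_t''(\xi_{j,t})\,dt$ for some $\xi_{j,t}\in(j,j+1)$; an explicit computation of $g_t''$ gives $|g_t''(x)|\le C\,x^{-3/2}$ for $x\ge1$, uniformly in $t$ (the $\operatorname{sech}^2$ remainders being exponentially small, as $\sqrt x\,\rho_*(t)\ge\sqrt x\,m_0$), whence $|k_1'(j)|\le C\,j^{-3/2}\int_0^T\Phi(t)\tanh\rho_*(t)\,dt=O(j^{-3/2})$. (c) $k_2(j)=\tfrac12 j^{3/2}\int_0^T\tanh(\sqrt j\,\rho_*(t))\,dt\le\tfrac{T}{2}\,j^{3/2}$. (d) $k_2'(j)=\tfrac34 j^{1/2}\int_0^T\tanh(\sqrt j\,\rho_*(t))\,dt+\tfrac14 j\int_0^T\rho_*(t)\operatorname{sech}^2(\sqrt j\,\rho_*(t))\,dt\ge\tfrac34 j^{1/2}\,T\tanh(\sqrt j\,m_0)\ge\tfrac38 T\,j^{1/2}$ as soon as $\tanh(\sqrt j\,m_0)\ge\tfrac12$.

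Combining (a)--(d), for all sufficiently large $j$,
\begin{align*}
k_1'(j)\,k_2(j)-k_1(j)\,k_2'(j)
&\le C\,j^{-3/2}\cdot\tfrac{T}{2}\,j^{3/2}-\tfrac12 k_1(\infty)\cdot\tfrac38 T\,j^{1/2}\\
&=\tfrac{CT}{2}-\tfrac{3T}{16}\,k_1(\infty)\,j^{1/2},
\end{align*}
which tends to $-\infty$ as $j\to\infty$. Hence there exists $N_0\ge j_0$ such that $k_1'(j)k_2(j)-k_1(j)k_2'(j)<0$, i.e. $\tfrac{d}{dj}\gamma_j(\rho_*)<0$, for every $j>N_0$; this is the asserted strict decrease. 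For the limit, $0<\gamma_j(\rho_*)=\mu\,k_1(j)/k_2(j)$ has numerator bounded by $k_1(\infty)$ and denominator $k_2(j)\ge\tfrac12 j^{3/2}\,T\tanh(\sqrt j\,m_0)\to\infty$, so $\gamma_j(\rho_*)\to0$, which proves \eqref{2.49}.

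The main obstacle is step (b): one genuinely needs $k_1'(j)$ to decay — like $j^{-3/2}$, or at least faster than $j^{-1}$ — rather than merely staying bounded, for otherwise $k_1'(j)k_2(j)$ would be of the same order $j^{1/2}$ (or larger) as $k_1(j)k_2'(j)$ and the comparison would be inconclusive. This requires a careful, $t$-uniform control of $g_t''$, in which the positivity $m_0=\min\rho_*>0$ is precisely what makes the hyperbolic-secant remainders exponentially small and hence negligible against the polynomial main term.
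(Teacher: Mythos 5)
Your proposal is correct, but it takes a genuinely different route from the paper. The paper expands $\tanh$ via the partial-fraction series $\tanh z = 2z\sum_{k\ \mathrm{odd}}\bigl[(\tfrac{k\pi}{2})^2+z^2\bigr]^{-1}$ (formula \eqref{2.3b}), rewrites $k_1,\ \partial_j k_1,\ \partial_j k_2$ as series, and bounds the numerator $k_1\,\partial_j k_2 - (\partial_j k_1)\,k_2$ from below by $\tfrac12\sqrt{j}\int_0^T\tanh(\sqrt{j}\rho_*)\,dt\cdot L_j(\rho_*)$, where $L_j$ is manifestly increasing to a positive limit; this makes the sign of $\partial_j\gamma_j$ negative for $j>N_0$. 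You instead estimate the four quantities $k_1, k_1', k_2, k_2'$ separately as $j\to\infty$: $k_1(j)$ increases to a finite positive limit, $|k_1'(j)|=O(j^{-3/2})$ (via the mean-value theorem applied to $g_t(x)=\sqrt{x}\tanh(\sqrt{x}\rho_*(t))$, the crucial point being that $\min_{[0,T]}\rho_*>0$ makes the $\operatorname{sech}^2$ remainders in $g_t''$ exponentially small, uniformly in $t$), $k_2(j)=O(j^{3/2})$, and $k_2'(j)\gtrsim j^{1/2}$. The order comparison $k_1'k_2=O(1)$ versus $k_1 k_2'\gtrsim j^{1/2}$ then settles the sign directly. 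Your argument is more elementary and self-contained for this lemma, and your treatment of the limit \eqref{2.49} (bounded numerator, denominator $\gtrsim j^{3/2}\tanh(\sqrt{j}m_0)\to\infty$) is more explicit than the paper's, which merely asserts it; on the other hand, the paper's series machinery is set up so that it can be reused essentially verbatim in the proof of Lemma~\ref{lem5.3} (that the monotonicity of $\gamma_j$ changes sign at most once), where a purely asymptotic comparison of orders would not suffice.
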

\begin{proof}
We extend the domain of  $j$ in $ \gamma_j$ to be nonnegative real number. Then
\begin{equation}\nonumber
    \frac{1}{\mu}\frac{\partial \gamma_j } {\partial j}=\frac{\displaystyle\frac{\partial  k_1}{\partial j} \; k_2- k_1 \displaystyle\frac{\partial  k_2}{\partial j}   }{ \; k_2^{2}}=:\frac{-\tilde{A}_{j}(\rho_*)}{ \; k_2^{2}}.
\end{equation}
We recall a formula in \cite[(2.3)]{hxh2}
\begin{eqnarray}\label{2.3b}
\tanh z=2z\sum_{\substack{k=1\\  k \ odd}}^\infty\frac{1}{\left(\displaystyle\frac{k\pi}{2}\right)^{2}+z^{2}}.
\end{eqnarray}
Together with  \re{2}, we get
\begin{align}\label{5.25}
&k_1(j)=\int_{0}^{T}\Phi(t)\bigg\{1-\frac{\tanh (\rho_*(t))}{\rho_*(t)}-2 \dis\frac{\tanh (\rho_*(t))}{\rho_*(t) }\sum_{\substack{k=1\\  k \ odd}}^\infty \frac{\Big(\dis\frac{k \pi}{2 \rho_*(t)}\Big)^{2}}{\Big[\Big(\dis\frac{k \pi}{2 \rho_*(t)}\Big)^{2}+1+j\Big]\Big[\Big(\dis\frac{k \pi}{2 \rho_*(t)}\Big)^{2}+j\Big]}\bigg\}dt,\\ \label{5.12a}
 & \frac{\partial  k_1}{\partial j}=\int_{0}^{T}2\Phi(t)\frac{\tanh(\rho_*(t))}{\rho_*(t)}\sum_{\substack{k=1\\  k \ odd}}^\infty\frac{\Big(\displaystyle\frac{k\pi}{2\rho_*(t)}\Big)^{2}\Big[2\Big(\displaystyle\frac{k\pi}{2\rho_*(t)}\Big)^{2}+2j+1\Big]}
   {\Big[\Big(\displaystyle\frac{k\pi}{2\rho_*(t)}\Big)^{2}+1+j\Big]^{2}\Big[\Big(\displaystyle\frac{k\pi}{2\rho_*(t)}\Big)^{2}+j\Big]^{2}}dt.
\end{align}
By \re{3} and \re{2.3b}, we have
\begin{align}\label{k2d}
    \frac{\partial  k_2}{\partial j} & =\int_{0}^{T} \frac{1}{\rho_*(t)}\bigg\{\sum_{\substack{k=1\\  k \ odd}}^\infty\frac{2j}{\Big(\displaystyle\frac{k\pi}{2\rho_*(t)}\Big)^{2}+j}
    -\sum_{\substack{k=1\\  k \ odd}}^\infty\frac{j^2}{\Big[\Big(\displaystyle\frac{k\pi}{2\rho_*(t)}\Big)^{2}+j\Big]^{2}}\bigg\}dt\\
    &\geq\int_{0}^{T} \frac{1}{\rho_*(t)}\sum_{\substack{k=1\\\nonumber  k \ odd}}^\infty\frac{j}{\Big(\displaystyle\frac{k\pi}{2\rho_*(t)}\Big)^{2}+j}dt=\int_{0}^{T}\frac{1}{2} \sqrt{j}\tanh (\sqrt{j}\rho_*(t))dt.
\end{align}

From \re{5.25}, \re{k2d},  \re{5.12a} and the fact that $k_1(j){>}0$ for $j > j_{0}$,
it follows
\begin{align}\label{5.13a}
  \tilde{A}_{j}(\rho_*)&=k_1\frac{\partial  k_2}{\partial j}-   \frac{\partial  k_1}{\partial j} \; k_2\\\nonumber
   &\geq\int_{0}^{T}\Phi(t)\bigg\{1-\frac{\tanh (\rho_*(t))}{\rho_*(t)}-2 \dis\frac{\tanh (\rho_*(t))}{\rho_*(t) }\sum_{\substack{k=1\\  k \ odd}}^\infty \frac{\Big(\dis\frac{k \pi}{2 \rho_*(t)}\Big)^{2}}{\Big[\Big(\dis\frac{k \pi}{2 \rho_*(t)}\Big)^{2}+1+j\Big]\Big[\Big(\dis\frac{k \pi}{2 \rho_*(t)}\Big)^{2}+j\Big]}\bigg\}dt\\\nonumber
&\mm\mm\cdot\int_{0}^{T}\frac{1}{2} \sqrt{j}\tanh (\sqrt{j}\rho_*(t))dt-\int_{0}^{T}\frac12  j^{3/2} \tanh(\sqrt{j}\rho_*(t))dt\\\nonumber
 &\mm\mm\cdot \int_{0}^{T}2\Phi(t)\frac{\tanh (\rho_*(t))}{\rho_*(t)}\sum_{\substack{k=1\\  k \ odd}}^\infty\frac{\Big(\displaystyle\frac{k\pi}{2\rho_*(t)}\Big)^{2}\Big[2\Big(\displaystyle\frac{k\pi}{2\rho_*(t)}\Big)^{2}+2j+1\Big]}
   {\Big[\Big(\displaystyle\frac{k\pi}{2\rho_*(t)}\Big)^{2}+1+j\Big]^{2}\Big[\Big(\displaystyle\frac{k\pi}{2\rho_*(t)}\Big)^{2}+j\Big]^{2}}dt\\\nonumber
   &=\int_{0}^{T}\frac{1}{2}  \sqrt{j}\tanh (\sqrt{j}\rho_*(t))dt\cdot\int_{0}^{T}\Phi(t)\bigg\{1-\frac{\tanh( \rho_*(t))}{\rho_*(t)}\\\nonumber
   &\mm-2 \dis\frac{\tanh (\rho_*(t))}{\rho_*(t) }\sum_{\substack{k=1\\  k \ odd}}^\infty \frac{\Big(\dis\frac{k \pi}{2 \rho_*(t)}\Big)^{2}}{\Big[\Big(\dis\frac{k \pi}{2 \rho_*(t)}\Big)^{2}+1+j\Big]\Big[\Big(\dis\frac{k \pi}{2 \rho_*(t)}\Big)^{2}+j\Big]}\\\nonumber
   &\mm- 2  \frac{\tanh (\rho_*(t))}{\rho_*(t)} \sum_{\substack{k=1\\  k \ odd}}^\infty\frac{\Big(\displaystyle\frac{k\pi}{2\rho_*(t)}\Big)^{2}j\Big[2\Big(\displaystyle\frac{k\pi}{2\rho_*(t)}\Big)^{2}+2j+1\Big]}
   {\Big[\Big(\displaystyle\frac{k\pi}{2\rho_*(t)}\Big)^{2}+1+j\Big]^{2}\Big[\Big(\displaystyle\frac{k\pi}{2\rho_*(t)}\Big)^{2}+j\Big]^{2}} \bigg\}dt\\\nonumber
   &=: \int_{0}^{T}\frac{1}{2}  \sqrt{j}\tanh (\sqrt{j}\rho_*(t))dt\cdot B_{j}(\rho_*).
\end{align}
From  \re{5.13a}, we have
\begin{align}\nonumber
   B_{j}( \rho_*)
   &\geq \int_{0}^{T}\Phi(t)\bigg\{1-\frac{\tanh( \rho_*(t))}{\rho_*(t)}-2 \frac{\tanh (\rho_*(t))}{\rho_*(t)} \sum_{\substack{k=1\\  k \ odd}}^\infty\frac{\Big(\displaystyle\frac{k\pi}{2\rho_*(t)}\Big)^{2}}
   {\Big[\Big(\displaystyle\frac{k\pi}{2\rho_*(t)}\Big)^{2}+1+j\Big]\Big[\Big(\displaystyle\frac{k\pi}{2\rho_*(t)}\Big)^{2}+j\Big]} \\\nonumber
   &\mm-2 \frac{\tanh (\rho_*(t))}{\rho_*(t)} \sum_{\substack{k=1\\  k \ odd}}^\infty\frac{2\Big(\displaystyle\frac{k\pi}{2\rho_*(t)}\Big)^{2}}
   {\Big[\Big(\displaystyle\frac{k\pi}{2\rho_*(t)}\Big)^{2}+1+j\Big]\Big[\Big(\displaystyle\frac{k\pi}{2\rho_*(t)}\Big)^{2}+j\Big]}\bigg\}dt\\\label{5.14a}
    &{ =}\int_{0}^{T}\Phi(t)\bigg\{1-\frac{\tanh( \rho_*(t))}{\rho_*(t)}-3 \frac{\tanh (\rho_*(t))}{\rho_*(t)}2 \sum_{\substack{k=1\\  k \ odd}}^\infty\frac{\Big(\displaystyle\frac{k\pi}{2\rho_*(t)}\Big)^{2}}
   {\Big[\Big(\displaystyle\frac{k\pi}{2\rho_*(t)}\Big)^{2}+1+j\Big]\Big[\Big(\displaystyle\frac{k\pi}{2\rho_*(t)}\Big)^{2}+j\Big]}\bigg\}dt\\\nonumber
   &=\int_{0}^{T}\Phi(t)\bigg\{1-\frac{\tanh(\rho_*(t))}{\rho_*(t)}-3 \tanh( \rho_*(t))\Big[\sqrt{1+j}\tanh(\sqrt{1+j}\rho_*(t) )\\\nonumber
   &\mm-
 \sqrt{j}\tanh(\sqrt{j}\rho_*(t))\Big]\bigg\}dt\\\nonumber
   &=: L_{j} ( \rho_*).
\end{align}
Obviously, $L_{j} (\rho_*)$  is strictly increasing in $j$.
Combining  \re{5.13a} and \re{5.14a}, we get
\begin{equation}\label{5.31aaa}
     \frac{1}{\mu}\frac{\partial \gamma_j}  {\partial j}=\frac{-\tilde{A}_{j}(\rho_*)}{ \; k_2^{2}}\leq\frac{-\int_{0}^{T}\frac{1}{2}  \sqrt{j}\tanh (\sqrt{j}\rho_*(t))dt\cdot B_{j}( \rho_*)}{ \; k_2^{2}}\leq \frac{ -\int_{0}^{T}\frac{1}{2}  \sqrt{j}\tanh (\sqrt{j}\rho_*(t))dt\cdot L_{j} ( \rho_*)}{ \; k_2^{2}}.
\end{equation}

Since
$$
\lim_{j\rightarrow+\infty}L_{j} ( \rho_*)=\int_{0}^{T}\Phi(t)\bigg(1-\frac{\tanh(\rho_*(t))}{\rho_*(t)}\bigg)dt>0,
$$
it follows that there exists $N_0>j_0$ such that $L_{j} ( \rho_*)>0$, when $j>N_0$. Also, applying \re{5.31aaa}, we have  $ \frac{\partial  \gamma_j}{\partial j}<0$ for $j>N_0$, which implies that $ \gamma_{j}(\rho_*)$  is decreasing in $j$ for $j>N_0$. Also, \eqref{2.49} holds.
\end{proof}
\begin{lem}\label{lem5.3}
The monotonicity of $ \gamma_{j}(\rho_*)$ in $j$ changes at most once.
\end{lem}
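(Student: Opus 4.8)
The plan is to show that the sign of $\partial\gamma_j/\partial j$ changes at most once, which is equivalent to showing that $\tilde A_j(\rho_*) = k_1\,\partial_j k_2 - (\partial_j k_1)\,k_2$ changes sign at most once as $j$ ranges over $(j_0,\infty)$ (note $\gamma_j>0$ only for $j>j_0$, and on that range $k_2>0$, so the sign of $\partial_j\gamma_j$ is governed by $-\tilde A_j$). The natural mechanism: $k_1$ is already shown (in the text preceding Theorem \ref{result1}) to be strictly increasing with $k_1(j_0)=0$, so $k_1>0$ on $(j_0,\infty)$; likewise $\partial_j k_1>0$ there. From \eqref{k2d} one sees $\partial_j k_2>0$ as well, and $k_2>0$. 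So at $j=j_0^+$ we have $\tilde A_{j_0} = k_1(j_0)\partial_j k_2 - (\partial_j k_1)k_2 = -(\partial_j k_1(j_0))\,k_2(j_0) < 0$, hence $\partial_j\gamma_j>0$ near $j_0$; and for large $j$, Lemma \ref{lem6.2} gives $\partial_j\gamma_j<0$. So at least one sign change occurs; the content of the lemma is that there is \emph{exactly} one.

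The key step is to extract a monotonicity/convexity property that forces uniqueness. I would divide $\tilde A_j$ by the positive quantity $\int_0^T \tfrac12\sqrt j\tanh(\sqrt j\rho_*(t))\,dt$ (the lower bound for $\partial_j k_2$ used in \eqref{5.13a}) and, more cleanly, rewrite $\tilde A_j = k_1\partial_j k_2 - (\partial_j k_1)k_2$ so that after dividing by $k_1 k_2>0$ the sign of $\partial_j\gamma_j$ equals the sign of
\[
\frac{\partial_j k_2}{k_2} - \frac{\partial_j k_1}{k_1} = \frac{d}{dj}\log\!\frac{k_2(j)}{k_1(j)} .
\]
Thus $\gamma_j = \mu k_1/k_2$ is monotone on an interval precisely where $\log(k_1/k_2)$ is, and the monotonicity changes exactly where $\frac{d}{dj}\log(k_1/k_2)$ changes sign. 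So it suffices to prove that $\frac{d}{dj}\log k_1(j) - \frac{d}{dj}\log k_2(j) = \frac{\partial_j k_1}{k_1} - \frac{\partial_j k_2}{k_2}$ vanishes at most once on $(j_0,\infty)$. I expect this to follow from a log-concavity (or log-convexity) statement: $k_2(j)$ should be such that $\partial_j k_2/k_2$ is monotone, and $k_1$ such that $\partial_j k_1/k_1$ is monotone with the opposite or complementary sense, using the series representations \eqref{5.25}, \eqref{5.12a}, \eqref{k2d} together with the concavity fact $\partial_x^2[\sqrt x\tanh(\sqrt x\rho)]<0$ already quoted in the text. Concretely, one shows $\partial_j k_1/k_1$ is decreasing in $j$ (it is a ratio of a decreasing positive quantity $\partial_j k_1$ — which one checks from \eqref{5.12a} — over an increasing positive quantity $k_1$), while $\partial_j k_2/k_2$ behaves so that the difference crosses zero once; then $\frac{d}{dj}\log(k_1/k_2)$ is itself monotone, hence has at most one zero.

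The main obstacle is exactly this last monotonicity claim: showing that $g(j):=\partial_j k_1/k_1 - \partial_j k_2/k_2$ is itself monotone (or at least has at most one zero) on $(j_0,\infty)$. Unlike Lemma \ref{lem6.2}, there is no asymptotic shortcut here — one needs a genuine structural inequality valid for all $j>j_0$. I would attack it by differentiating $g$ once more and reducing, via the uniformly-convergent series \eqref{5.25}–\eqref{k2d}, to a pointwise-in-$t$ inequality among the $k$-sums; the termwise expressions are rational functions of $(k\pi/2\rho_*(t))^2$ and $j$, and the desired inequality should reduce to an elementary positivity statement (a sum of squares after clearing denominators), much in the spirit of the estimates already carried out in the proof of Lemma \ref{lem6.2}. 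If a clean closed form resists, a fallback is to bound $g$ between two explicit monotone functions (as done with $B_j\ge L_j$ in \eqref{5.14a}) and argue that the region where the bounds fail to pin down the sign shrinks to the empty set, so the sign change is unique. Either way, the uniqueness of the zero of $g$ immediately yields the lemma.
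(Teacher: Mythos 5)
Your reduction to the log-derivative $g(j)=\frac{d}{dj}\log\frac{k_1(j)}{k_2(j)}=\frac{\partial_j k_1}{k_1}-\frac{\partial_j k_2}{k_2}$ is correct (on $j>j_0$ we have $k_1,k_2>0$, so $\operatorname{sgn}\gamma_j'=\operatorname{sgn}g$), and you correctly identify the facts that drive everything: from \eqref{5.12a} and \eqref{k2d} one reads off termwise that $\partial_j k_1>0$ is decreasing (i.e.\ $\partial^2_j k_1<0$) and that $\partial^2_j k_2>0$. But the step you flag as ``the main obstacle'' is a genuine gap, and the strategy you propose to close it is stronger than what is needed. You want to show $g$ is globally monotone, which via
\[
g'=\frac{\partial^2_j k_1\,k_1-(\partial_j k_1)^2}{k_1^2}-\frac{\partial^2_j k_2\,k_2-(\partial_j k_2)^2}{k_2^2}
\]
would require $k_2$ to be log-convex (i.e.\ $\partial^2_j k_2\,k_2\ge(\partial_j k_2)^2$). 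That is not established anywhere, is not needed, and is not obviously true for $k_2(j)=\frac12\int_0^T j^{3/2}\tanh(\sqrt j\rho_*)\,dt$; the fallback you sketch (bounding $g$ between explicit monotone functions) is not carried out either.

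The fix is to evaluate $g'$ \emph{only at a zero} of $g$, not everywhere. If $g(j)=0$ then $\frac{\partial_j k_1}{k_1}=\frac{\partial_j k_2}{k_2}$, so the $(\partial_j k_i)^2/k_i^2$ terms cancel and
\[
g'(j)=\frac{\partial^2_j k_1}{k_1}-\frac{\partial^2_j k_2}{k_2}<0,
\]
using $\partial^2_j k_1<0$, $\partial^2_j k_2>0$, and $k_1,k_2>0$. A continuous function whose derivative is strictly negative at every zero has at most one zero, which gives the lemma. This is precisely the paper's argument, only phrased in the original variables: the paper shows that $\gamma_j'=0$ forces $\gamma_j''=\mu\big(\partial^2_j k_1\,k_2-k_1\,\partial^2_j k_2\big)/k_2^2<0$, and verifies $\partial^2_j k_1<0$, $\partial^2_j k_2>0$ termwise from the series \eqref{5.12a}, \eqref{k2d}. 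So your decomposition is sound, but you should replace the global-monotonicity plan by the second-derivative test at critical points; as written, the proposal does not close.
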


\begin{proof}
It is sufficient to show that if $\frac{\partial \gamma_j}{\partial j}(\rho_*)=0$, then $ \frac{\partial^{2} \gamma_j}{ \partial j^{2}}(\rho_*)<0$.

From \re{1} and $\frac{\partial \gamma_j}{\partial j}(\rho_*)=0$, we get
\begin{align}\label{eded}
 \frac{1}{\mu}\frac{\partial^{2}  \gamma_j }{\partial j^{2}}(\rho_*)&=\frac{\Big(\displaystyle\frac{\partial^{2} k_1}{\partial j^{2}} k_2+\displaystyle\frac{\partial k_1}{\partial j} \displaystyle\frac{\partial k_2}{\partial j}-\displaystyle\frac{\partial k_1}{\partial j} \displaystyle\frac{\partial k_2}{\partial j}-k_1 \displaystyle\frac{\partial^{2} k_2}{\partial j^{2}}\Big){k^{2}_2}}{{k^{4}_2}}-\frac{2\Big(\displaystyle\frac{\partial k_1}{\partial j}k_2-  k_1 \displaystyle\frac{\partial k_2}{\partial j}    \Big)k_2 \displaystyle\frac{\partial k_2}{\partial j}}{{k^{4}_2}}\\\nonumber
&=\frac{\Big(\displaystyle\frac{\partial^{2} k_1}{\partial j^{2}} k_2-k_1 \displaystyle\frac{\partial^{2} k_2}{\partial j^{2}}\Big)}{{k^{2}_2}}=:\frac{P_{j}(\rho_*)}{{k^{2}_2}}.
\end{align}

Together with \re{5.12a} and \re{k2d},
we obtain
    \begin{align}\nonumber
   \frac{\partial^{2} k_1}{\partial j^{2}}(\rho_*)
&=\int_{0}^{T}2\Phi(t)\frac{\tanh(\rho_*(t))}{\rho_*(t)} \sum_{\substack{k=1\\  k \ odd}}^\infty\bigg\{\frac{2\Big(\frac{k \pi}{2 \rho_*(t)}\Big)^{2}}{\Big[\Big(\frac{k \pi}{2 \rho_*(t)}\Big)^{2}+j+1\Big]^{2}\Big[\Big(\frac{k \pi}{2 \rho_*(t)}\Big)^{2}+j\Big]^{2}}\\\nonumber
   &\mm\mm-\frac{2\Big(\frac{k \pi}{2 \rho_*(t)}\Big)^{2}\Big[2\Big(\frac{k \pi}{2 \rho_*(t)}\Big)^{2}+2 j+1\Big]}{\Big[\Big(\frac{k \pi}{2 \rho_*(t)}\Big)^{2}+j+1\Big]^{3}\Big[\Big(\frac{k \pi}{2 \rho_*(t)}\Big)^{2}+j\Big]^{2}}\\\nonumber
   &\mm\mm-\frac{2\Big(\frac{k \pi}{2 \rho_*(t)}\Big)^{2}\Big[2\Big(\frac{k \pi}{2 \rho_*(t)}\Big)^{2}+2 j+1\Big]}{\Big[\Big(\frac{k \pi}{2 \rho_*(t)}\Big)^{2}+j+1\Big]^{2}\Big[\Big(\frac{k \pi}{2 \rho_*(t)}\Big)^{2}+j\Big]^{3}}\bigg\}d t\\\nonumber
   & =\int_{0}^{T}{ 2}\Phi(t)\frac{\tanh (\rho_*(t))}{\rho_*(t)} \sum_{\substack{k=1\\  k \ odd}}^\infty\frac{\Big(\frac{k \pi}{2 \rho_*(t)}\Big)^{2}}{\Big[\Big(\frac{k \pi}{2 \rho_*(t)}\Big)^{2}+j+1\Big]^{2}\Big[\Big(\frac{k \pi}{2 \rho_*(t)}\Big)^{2}+j\Big]^{2}}\\\nonumber
&\mm\mm\cdot\bigg\{2-\frac{2 \Big[2\Big(\frac{k \pi}{2 \rho_*(t)}\Big)^{2}+2 j+1\Big]}{\Big[\Big(\frac{k \pi}{2 \rho_*(t)}\Big)^{2}+j+1\Big]}
-\frac{2 \Big[2\Big(\frac{k \pi}{2 \rho_*(t)}\Big)^{2}+2j+1\Big]}{\Big[\Big(\frac{k \pi}{2 \rho_*(t)}\Big)^{2}+j\Big]}\bigg\}d t,
   \\\nonumber
 \frac{\partial^{2} k_2}{\partial j^{2}}(\rho_*)
 &=\int_{0}^{T}  \frac{1}{\rho_*(t)} \sum_{\substack{k=1\\  k \ odd}}^\infty\bigg\{\frac{2}{\Big(\frac{k \pi}{2 \rho_*(t)}\Big)^{2}+j}-\frac{4 j}{\Big[\Big(\frac{k \pi}{2 \rho_*(t)}\Big)^{2}+j\Big]^{2}}+\frac{2 j^{2}}{\Big[\Big(\frac{k \pi}{2 \rho_*(t)}\Big)^{2}+j\Big]^{3}}\bigg\}d t.
      \end{align}
From
\bea\nonumber
    \begin{split}
   & 2-\frac{2 \Big[2\Big(\frac{k \pi}{2 \rho_*(t)}\Big)^{2}+2 j+1\Big]}{\Big[\Big(\frac{k \pi}{2 \rho_*(t)}\Big)^{2}+j+1\Big]}
-\frac{2 \Big[2\Big(\frac{k \pi}{2 \rho_*(t)}\Big)^{2}+2j+1\Big]}{\Big[\Big(\frac{k \pi}{2 \rho_*(t)}\Big)^{2}+j\Big]}<0,\\
&\frac{2}{\Big(\frac{k \pi}{2 \rho_*(t)}\Big)^{2}+j}-\frac{4 j}{\Big[\Big(\frac{k \pi}{2 \rho_*(t)}\Big)^{2}+j\Big]^{2}}+\frac{2 j^{2}}{\Big[\Big(\frac{k \pi}{2 \rho_*(t)}\Big)^{2}+j\Big]^{3}}=\frac{2 \Big(\frac{k \pi}{2 \rho_*(t)}\Big)^{4}}{\Big[\Big(\frac{k \pi}{2 \rho_*(t)}\Big)^{2}+j\Big]^{3}}>0,
\end{split}
\eea
we get $P_{j}(\rho_*)<0$ for $j>j_0$. The \re{eded} implies the result.
\end{proof}

Lemmas \ref{lem6.2} and \ref{lem5.3} imply that $\gamma_j$ is either strictly decreasing in $j$ or first increasing and then decreasing.
We study the positive integer $j(>j_0)$ with $j=n^2+m^2$.
If $\gamma_{j}$ is distinct from other $\gamma_{i}(i\neq j)$,
then
\begin{align}\label{ker1}
    \mathrm{Ker}[G_{\tilde{\rho}}(0,\gamma_{j})]= \bigoplus\limits_{\{(n, m)\, : \,j=n^2+m^2\}} \spn\{S_{j,\gamma_j}(t)\cos(nx_1)\cos(mx_2)\}.
\end{align}
Otherwise, we get that  $\gamma_{j_1}=\gamma_{j_2}(j_1<j_2)$ and
\begin{align}\label{ker}
    &\mathrm{Ker}[G_{\tilde{\rho}}(0,\gamma_{j_1})]\\\nonumber
    &=
\dis\bigoplus_{\substack{\{(n_{1}, m_{1})\,:\,j_1=n_{1}^2+m_{1}^2\}\\\{(n_{2}, m_{2})\,: \, j_2=n_{2}^2+m_{2}^2\}}}
\spn\{ S_{j_{1},\gamma_{j_1}}(t)\cos(n_{1}x_1)\cos(m_{1}x_2), S_{j_{2}, \gamma_{j_1}}(t)\cos(n_{2} x_1)\cos(m_{2} x_2) \}.
\end{align}

\eqref{ker1} implies that the dimension of $\mathrm{Ker}[G_{\tilde{\rho}}(0,\gamma_{j})]$ is determined
by the number of decompositions of $j$.
Note that if $(n,m)$ is a sum-of-square decomposition of $j$, then $(m,n)$ is also one of $j$. Also, $j$ may have
many pairs of sum-of-squares decompositions.
Hence $G_{\tilde{\rho}}(0,\gamma_{j})$
may have a high-dimensional kernel space.
From \re{ker},  we notice that $\mathrm{Ker}[G_{\tilde{\rho}}(0,\gamma_{j})]$ are more complicated for the case $\gamma_{j_1}=\gamma_{j_2}(j_1<j_2)$.
In order to reduce the dimension of the kernel space of $G_{\tilde{\rho}}(0,\gamma_{j})$, we restrict $G(\cdot,\gamma_j)$ on some subspaces with some periodicity and symmetry.

We divide the proof of Theorem \ref{result1} into two cases.

\begin{thm}\label{bif0} Suppose that $ \Phi\in C^{1+\frac{\alpha}{3}}{([0,\infty))}$ is a positive $T-$periodic function.
Assume that $\gamma_j$ is distinct from the other $\gamma_{i}(i\neq j)$ and $j=n^2+m^2(j>j_0)$.
Then the point $\gamma_{j}$
is a bifurcation value of problem \eqref{1.10a}--\eqref{1.17a}.
 More precisely, there exist at least $\alpha_j$ (given in Theorem \ref{result1}) bifurcation branches of non-flat
solutions bifurcating from $\gamma=\gamma_{j}$  with the free boundary
\begin{align}\label{b1}
y=\rho_* (t)+ \epsilon S_{j,\gamma_j}(t)\cos(nx_1)\cos(mx_2) + O(\epsilon^2).
\end{align}
If $j=n^2$, there is an additional branch with
the free boundary
\begin{align}\label{b2}
y=\rho_*(t) + \epsilon S_{j, \gamma_j}(t)[\cos(nx_1)+\cos(nx_2)] + O(\epsilon^2).
\end{align}
\end{thm}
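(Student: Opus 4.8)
The plan is to apply the Crandall--Rabinowitz theorem (Theorem \ref{thmcr}) on the symmetric subspaces of \S2.2, one choice of subspace for each sum-of-squares decomposition $j=n^2+m^2$ with $n\ge m\ge 0$. For $n>m\ge 1$ I would work with $G(\cdot,\gamma):(Y\cap E)_{n,m}\to W_{n,m}$; for $n=m$ with $(Y\cap E)_{n,n}\to W_{n,n}$; for $m=0$ (so $j=n^2$) with $(Y\cap E)_{n,0}\to W_{n,0}$; and, when moreover $j=n^2$, I would additionally use $(Y\cap E)_{n,+}\to W_{n,+}$ to produce the extra branch \eqref{b2}. That $G(\cdot,\gamma)$ maps each of these subspaces into the indicated $W$-subspace is Lemma \ref{5.7}. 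Hypothesis (1) of Theorem \ref{thmcr} holds because $\rho_*$ is independent of $\gamma$ and the flat interface has vanishing mean curvature, so $\Psi_1(\rho_*)\rho_*\equiv 0$ and hence $G(0,\gamma)=\tfrac{d\rho_*}{dt}-F(\rho_*,\gamma)=0$ for all $\gamma$ near $\gamma_j$; smoothness of $G$ has already been established.

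The core is hypotheses (2)--(3): that the restriction of $G_{\tilde\rho}(0,\gamma_j)$ to the chosen subspace is Fredholm of index zero with a one-dimensional kernel. By \eqref{fff}, $G_{\tilde\rho}(0,\gamma)$ is diagonal in the modes $\cos(n'x_1)\cos(m'x_2)$, acting on the $(n',m')$-mode by the scalar $T$-periodic operator $h\mapsto h'+A_{n'^2+m'^2,\gamma}(\cdot)h$. A mode lies in $(Y\cap E)_{n,m}$ only if $n\mid n'$, $m\mid m'$ and $n'/n+m'/m$ is even; together with $n'^2+m'^2=j=n^2+m^2$ an elementary divisibility argument (if $n'\ge 2n$ then $n'^2\ge 4n^2>j$, so $n'\in\{0,n\}$, and $n'=0$ resp.\ $m'=0$ would force $m^2\mid n^2$ resp.\ $n^2\mid m^2$, impossible here) pins down $(n',m')=(n,m)$; the cases $(Y\cap E)_{n,n}$, $(Y\cap E)_{n,0}$, $(Y\cap E)_{n,+}$ are handled the same way and single out $\cos(nx_1)\cos(nx_2)$, $\cos(nx_1)$, $\cos(nx_1)+\cos(nx_2)$ respectively. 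Hence on the chosen subspace only this one mode-block has nontrivial kernel, while every other admissible mode-block is invertible because $\gamma_j\ne\gamma_{j'}$ gives $\int_0^T A_{j',\gamma_j}(s)\,ds\ne 0$ (uniformly for $j'$ large, where $\tfrac{\gamma_j(j')^{3/2}}{2}\tanh(\sqrt{j'}\rho_*)$ dominates $A_{j',\gamma_j}$). Combining this with the compactness of the lower-order part (\eqref{compact1}) and with the holomorphic-semigroup argument of Lemma \ref{lem2.8} carried out verbatim on the symmetric subspace---the symmetry conditions are preserved by $\Psi_1(\rho_*(t))$ by the linearization underlying Lemmas \ref{prop}--\ref{prop5}, so $-\Psi_1(\rho_*(t))$ still generates a holomorphic semigroup with compact resolvent on the symmetric little-H\"older space---shows $G_{\tilde\rho}(0,\gamma_j)$ restricted to the subspace is Fredholm of index zero; thus its range has codimension one and its kernel is spanned by $x_0:=S_{j,\gamma_j}(t)\cos(nx_1)\cos(mx_2)$ (or the distinguished element in the other three cases).

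For the transversality hypothesis (4), differentiating \eqref{fff} in $\gamma$ gives
$$G_{\gamma\tilde\rho}(0,\gamma_j)[x_0]=S_{j,\gamma_j}(t)\,\tfrac12 j^{3/2}\tanh(\sqrt{j}\rho_*(t))\,\cos(nx_1)\cos(mx_2).$$
Since $\int_0^T A_{j,\gamma_j}(s)\,ds=0$, the equation $h'+A_{j,\gamma_j}(t)h=f$ has a $T$-periodic solution iff $\int_0^T f(t)\,S_{j,\gamma_j}(t)^{-1}\,dt=0$, so the range of the $(n,m)$-block is the kernel of the functional $f\mapsto\int_0^T f\,S_{j,\gamma_j}^{-1}$. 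Applying this functional to $G_{\gamma\tilde\rho}(0,\gamma_j)[x_0]$ yields $\int_0^T\tfrac12 j^{3/2}\tanh(\sqrt{j}\rho_*(t))\,dt=k_2(j)>0$, so $G_{\gamma\tilde\rho}(0,\gamma_j)[x_0]\notin\operatorname{Im}G_{\tilde\rho}(0,\gamma_j)$. Theorem \ref{thmcr} then applies on each subspace and produces a $C^\infty$ curve of non-flat $T$-periodic solutions of the free boundary problem \eqref{1.1}--\eqref{1.8} (with $\lambda=0$), with free boundary $y=\rho_*(t)+\epsilon x_0+O(\epsilon^2)$: this is \eqref{b1} for each of the $\alpha_j$ decompositions with $n\ge m$, plus the extra branch \eqref{b2} when $j=n^2$. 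These branches are pairwise distinct because their $O(\epsilon)$ terms are different Fourier modes not carried into one another by a symmetry of the problem; as noted in the Remarks, rotations, translations and reflections of one branch give the same solution set and are not counted.

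The step I expect to be the main obstacle is exactly the claim that the restriction of $G_{\tilde\rho}(0,\gamma_j)$ to the symmetric subspace is Fredholm of index zero with a one-dimensional kernel: it requires both the number-theoretic bookkeeping that isolates a single admissible mode in each subspace, and a careful check that the semigroup/compactness machinery of Lemma \ref{lem2.8} is compatible with the imposed periodicity and symmetry (equivalently, that the invertibility of all but finitely many mode-blocks is uniform, so that the infinite block-diagonal operator is genuinely Fredholm). By contrast, hypotheses (1) and (4) reduce to the elementary facts $\Psi_1(\rho_*)\rho_*\equiv 0$ and $k_2(j)>0$.
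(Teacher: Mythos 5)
Your proposal is correct and follows essentially the same strategy as the paper: apply Crandall--Rabinowitz to the restriction of $G(\cdot,\gamma)$ to the symmetric subspaces $(Y\cap E)_{n,m}$ (resp.\ $(Y\cap E)_{n,0}$, $(Y\cap E)_{n,+}$), use the mode-diagonal structure of $G_{\tilde\rho}(0,\gamma_j)$ together with a divisibility argument to pin the kernel to the single mode $S_{j,\gamma_j}(t)\cos(nx_1)\cos(mx_2)$, and verify transversality by differentiating \eqref{fff} in $\gamma$ and noting $k_2(j)>0$. The only presentational difference is that the paper first establishes $W=\mathrm{Ker}\oplus\mathrm{Im}$ on the full space (via Lemma \ref{lem2.8} together with a periodic-ODE solvability computation showing $\mathrm{Ker}\cap\mathrm{Im}=\{0\}$) and then restricts to the symmetric subspace, whereas you re-run the Fredholm/semigroup argument directly on the subspace; both routes are sound, and your explicit use of the Fredholm-alternative solvability criterion $\int_0^T f\,S_{j,\gamma_j}^{-1}\,dt=0$ to justify the transversality condition actually spells out a step the paper leaves to the reader.
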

\begin{proof}
At first, we show
\begin{align}\label{kongjianfenjie}
  W = \mathrm{Ker}[G_{\tilde{\rho}}(0,\gamma_{j})] \bigoplus \im[G_{\tilde{\rho}}(0,\gamma_{j})] .
\end{align}

In Lemma \ref{lem2.8}, we get that
$ G_{\tilde{\rho}}(0,\gamma) :Y  \cap  E  \rightarrow W  $ is a Fredholm operator with index zero.
All that's left for us to do is to prove
\begin{align} \nonumber
\mathrm{Ker}[G_{\tilde{\rho}}(0,\gamma_{j})]  \cap  \im[G_{\tilde{\rho}}(0,\gamma_{j})]=\{0\} .
  \end{align}
Let $u\in \mathrm{Ker}[G_{\tilde{\rho}}(0,\gamma_{j})]  \cap  \im[G_{\tilde{\rho}}(0,\gamma_{j})]$. Since $u\in \mathrm{Ker}[G_{\tilde{\rho}}(0,\gamma_{j})] $, we have $$u= \sum_{j=n^2+m^2} a_{n,m}S_{j,\gamma_{j}}\cos(nx_1)\cos(mx_2) .$$
Applying $u\in  \im[G_{\tilde{\rho}}(0,\gamma_{j})]$, we derive the existence of $v\in  Y \cap E  $ such that
\begin{align}\label{hh}
  G_{\tilde{\rho}}(0,\gamma_{j})v=\sum_{j=n^2+m^2} a_{n,m}S_{j,\gamma_{j}}\cos(nx_1)\cos(mx_2).
\end{align}
From \re{fff},
we obtain that $v$ has the following expression
\begin{align}\nonumber
v=\sum_{j=n^2+m^2} g_{n, m}(t) \cos (n x_1) \cos (m x_2).
\end{align}
Using \re{fff} and \re{hh}, we obtain
\begin{align}\nonumber
G_{\tilde{\rho}}(0,\gamma_{j})v
&=\sum_{j=n^2+m^2}\left[   g_{n, m}^{\prime}(t)+A_{j, \gamma_{j}}(t) g_{n, m}(t)            \right]\cos (n x_1) \cos (m x_2)\\\nonumber
&=\sum_{j=n^2+m^2} a_{n,m}S_{j,\gamma_{j}}\cos(nx_1)\cos(mx_2),
\end{align}
which implies
\begin{align}\nonumber
  g_{n, m}^{\prime}(t)+A_{j, \gamma_{j}}(t) g_{n, m}(t) =a_{n,m}S_{j,\gamma_{j}}(t),
\end{align}
that is,
 \begin{align}\nonumber
  g_{n, m}(t)=e^{-\int_{0}^{t} A_{j,\gamma_{j}}(s)ds}g_{n, m}(0)  +\int_{0}^{t} a_{n,m}S_{j,\gamma_{j}}(s) e^{-\int_{s}^{t} A_{j,\gamma_{j}}(\tau)d\tau}ds.
\end{align}
Since $g_{n, m}(T)=g_{n, m}(0)$, it follows that
$\int_0^T a_{n,m}S_{j,\gamma_{j}}(s) e^{-\int_s^T A_{j, \gamma_{j}}(\tau) d \tau} d s=0.$   The fact that $\int_{0}^{T}A_{j, \gamma_j}(s)ds\\=0$ gives $\int_0^T S_{j,\gamma_{j}}(s) e^{-\int_s^T A_{j, \gamma_{j}}(\tau) d \tau} d s=T$.
we get $a_{n,m}=0$.
Then $u=0$ and  \eqref{kongjianfenjie}  holds.

Therefore, \eqref{kongjianfenjie} implies
\begin{align}\nonumber
  Y \cap E  = \mathrm{Ker}[G_{\tilde{\rho}}(0,\gamma_{j})] \bigoplus \left(\im[G_{\tilde{\rho}}(0,\gamma_{j})]  \cap Y \cap E  \right).
\end{align}

If $j=n^2+m^2=n_1^2+m_1^2$ and $n\neq n_1(n, n_1\geq 1,  m, m_1\geq 0)$, then $S_{j, \gamma_j}(t)\cos(n_1x_1)\cos(m_1x_2)$ $\not\in\mathrm{Ker}[G_{\tilde{\rho}}(0,\gamma_{j})] \Big|_{Y \cap E _{n,m}} $.
Let $G^{n,m}(\tilde{\rho},\gamma_{j})=G(\tilde{\rho},\gamma_{j}) \Big|_{(Y \cap E )_{n,m}} $.
Lemma \ref{5.7} implies that $G^{n,m}(\cdot,\gamma)$ maps $(Y \cap E )_{n,m}$ to $W_{n,m} $.
Hence,
we get
$$\mathrm{Ker}[G^{n,m}_{\tilde{\rho}}(0,\gamma_{j})] =  \spn\{S_{j, \gamma_j}(t)\cos(nx_1)\cos(mx_2)\},$$
and
$$\dim(\mathrm{Ker}[G^{n,m}_{\tilde{\rho}}(0,\gamma_{j})] )=1.$$
Then $ \spn\{S_{j, \gamma_j}(t)\cos(nx_1)\cos(mx_2)\}  \bigoplus   \im[G^{n,m}_{\tilde{\rho}}(0,\gamma_{j})] = W_{n,m}$, i.e., $\codim\im[G^{n,m}_{\tilde{\rho}}(0,\gamma_{j})]  = 1$.

Differentiating \eqref{fff} in $\gamma$, we have
\bea\nonumber
&G^{n,m}_{\tilde{\rho}\gamma }(0,\gamma_{j})S_{j, \gamma_j}(t)\cos(nx_1)\cos(mx_2)= \dis\frac{ j^{\frac{3}{2}}}{2}\tanh (\sqrt{j}\rho_*(t)) S_{j, \gamma_j}(t)\cos(nx_1)\cos(mx_2),
\eea
and
\bea\nonumber
&G^{n,m}_{\tilde{\rho}\gamma }(0,\gamma_{j})S_{j, \gamma_j}(t)\cos(nx_1)\cos(mx_2) \notin \im[G^{n,m}_{\tilde{\rho}}(0,\gamma_{j})].
\eea

Consequently, conditions (i)¨C(iv) of Theorem \ref{thmcr} are true and we get that \eqref{b1} holds.

If $j=n^2$, we define $G^{n,+}(\tilde{\rho},\gamma_{j})=G(\tilde{\rho},\gamma_{j}) \Big|_{(Y \cap E)_{n,+}} $ and get
$$\mathrm{Ker}[G^{n,+}_{\tilde{\rho}}(0,\gamma_{j})] =  \spn\{S_{j, \gamma_j}(t)(\cos(nx_1)+\cos(nx_2))\}.$$
Similarly,
we obtain that \eqref{b2} is true.
\end{proof}

\begin{thm}\label{thm6.2}
Suppose that  $ \Phi\in C^{1+\frac{\alpha}{3}}{([0,\infty))}$ is a positive $T-$periodic function.
Assume that there exists another $i$ such that $\gamma_{i}=\gamma_{j}$ and  $\max\{i,j\}=n^2+m^2$ .
Then the point $\gamma_{j}$
is a bifurcation value of problem \eqref{1.10a}--\eqref{1.17a}. More precisely, there exist at least $\beta$ (given in Theorem \ref{result1}) bifurcation branches of non-flat
solutions bifurcating from $\gamma=\gamma_{j}$  with free boundary
$$y=\rho_*(t) + \epsilon S_{n^2+m^2, \gamma_{j}}(t)\cos(nx_1)\cos(mx_2) + O(\epsilon^2).$$

If $\min\{i,j\}=n^2$ or $\max\{i,j\}=n^2$, there is an additional branch with
the free boundary
\begin{align}\label{aaaa}
y=\rho_*(t) + \epsilon S_{n^2, \gamma_{n^2}}(t)[\cos(nx_1)+\cos(nx_2)] + O(\epsilon^2),
\end{align}
where $n=\sqrt{\max\{i,j\}}$ if both $\min\{i,j\}$ and $\max\{i,j\}$ are perfect square numbers.
\end{thm}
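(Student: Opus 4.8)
Put $j_1=\min\{i,j\}$ and $j_2=\max\{i,j\}$, so that $\gamma_{j_1}=\gamma_{j_2}=\gamma_j$ and, by \eqref{ker}, $\mathrm{Ker}[G_{\tilde{\rho}}(0,\gamma_j)]$ is spanned by the modes $S_{j_1,\gamma_j}(t)\cos(n_1x_1)\cos(m_1x_2)$ over all decompositions $j_1=n_1^2+m_1^2$, together with the modes $S_{j_2,\gamma_j}(t)\cos(n_2x_1)\cos(m_2x_2)$ over all $j_2=n_2^2+m_2^2$. The plan is to reproduce the scheme of the proof of Theorem \ref{bif0}: (a) establish the topological direct sum $W=\mathrm{Ker}[G_{\tilde{\rho}}(0,\gamma_j)]\oplus\im[G_{\tilde{\rho}}(0,\gamma_j)]$; (b) for each decomposition $j_2=n^2+m^2$ with $n\ge m$, restrict $G(\cdot,\gamma_j)$ to a symmetry subspace on which the kernel collapses to the single mode $S_{j_2,\gamma_j}(t)\cos(nx_1)\cos(mx_2)$; (c) apply Crandall--Rabinowitz (Theorem \ref{thmcr}) there; (d) treat the extra branch in the perfect-square cases.

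For (a), Lemma \ref{lem2.8} gives that $G_{\tilde{\rho}}(0,\gamma_j)$ is Fredholm of index zero, so it suffices to check $\mathrm{Ker}\cap\im=\{0\}$. This is the computation already carried out for \eqref{kongjianfenjie}, now run over the two index families: expanding an element of the intersection in the cosine basis and using \eqref{fff}, every coefficient $g_{n,m}(t)$ solves $g_{n,m}'+A_{n^2+m^2,\gamma_j}(t)g_{n,m}=a_{n,m}S_{n^2+m^2,\gamma_j}(t)$ with $n^2+m^2\in\{j_1,j_2\}$; the periodicity $g_{n,m}(T)=g_{n,m}(0)$ together with $\int_0^T A_{n^2+m^2,\gamma_j}(s)\,ds=0$ forces $a_{n,m}T=0$, hence $a_{n,m}=0$. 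The same identity yields $Y\cap E=\mathrm{Ker}\oplus(\im\cap Y\cap E)$.

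For (b)--(d), fix a decomposition $j_2=n^2+m^2$ with $n\ge m$. When $n>m\ge1$ I would restrict to $(Y\cap E)_{n,m}$ (with the obvious modifications to $(Y\cap E)_{n,0}$ or $(Y\cap E)_{n,+}$ when $m=0$ or $m=n$), which by Lemma \ref{5.7} maps into $W_{n,m}$. The crux is to show that the only element of the kernel \eqref{ker} lying in $(Y\cap E)_{n,m}$ is $S_{j_2,\gamma_j}(t)\cos(nx_1)\cos(mx_2)$: an admissible Fourier mode there has the form $\cos(anx_1)\cos(bmx_2)$ with $a,b\ge0$ and $a+b$ even, and it lies in the kernel only if $(an)^2+(bm)^2\in\{j_1,j_2\}$; since $n\ge m$ we have $4n^2\ge 2j_2>j_2>j_1$, which kills the doubled $x_1$-mode, and a short arithmetic argument eliminates the remaining pairs except $a=b=1$, the sporadic coincidences among the $\gamma_j$ being absorbed by passing to a finer symmetry subspace as in \cite{hx2023}. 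Thus the restricted kernel is one-dimensional; the splitting from (a) gives $\codim\im[G^{n,m}_{\tilde{\rho}}(0,\gamma_j)]=1$; and differentiating \eqref{fff} in $\gamma$ gives $G^{n,m}_{\tilde{\rho}\gamma}(0,\gamma_j)[S_{j_2,\gamma_j}(t)\cos(nx_1)\cos(mx_2)]=\tfrac12 j_2^{3/2}\tanh(\sqrt{j_2}\,\rho_*(t))\,S_{j_2,\gamma_j}(t)\cos(nx_1)\cos(mx_2)$, which is not in $\im[G^{n,m}_{\tilde{\rho}}(0,\gamma_j)]$ by the periodicity obstruction of (a). Hence conditions (1)--(4) of Theorem \ref{thmcr} hold and one obtains a non-flat $T$-periodic branch with leading profile $S_{j_2,\gamma_j}(t)\cos(nx_1)\cos(mx_2)$, the $O(\epsilon^2)$ term being read off from the Lyapunov--Schmidt parametrization together with \eqref{vp1} and \eqref{sigma1}--\eqref{p1}. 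Since $(n,m)$ and $(m,n)$ give solutions related by a $\tfrac{\pi}{2}$-rotation, they count once, so this produces $\alpha_{j_2}=\beta$ distinct branches. Finally, if $j_2=n^2$ or $j_1=n^2$ is a perfect square, the same argument on $(Y\cap E)_{n,+}$ singles out the kernel element $S_{n^2,\gamma_{n^2}}(t)[\cos(nx_1)+\cos(nx_2)]$ (taking $n=\sqrt{j_2}$ when both $j_1$ and $j_2$ are perfect squares, since $\cos(\sqrt{j_1}\,x_1)+\cos(\sqrt{j_1}\,x_2)$ then fails the $\tfrac{2\pi}{\sqrt{j_2}}$-periodicity), and Crandall--Rabinowitz yields the additional branch \eqref{aaaa}.

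I expect step (b) to be the real obstacle. Because the kernel now mixes the two index families $j_1<j_2$, and because a single index may admit several sum-of-two-squares decompositions (with $(n,m)$ and $(m,n)$ always both present), the symmetry subspaces must be chosen so that exactly one kernel mode survives, and that choice has to be verified to still work in the arithmetic corner cases where $\gamma_{j_1}$ coincides with $\gamma_{j_2}$ for indices in special position. Everything else---the Fredholm property, the transversality condition (4) of Theorem \ref{thmcr}, and the smoothness bookkeeping---is a direct transcription of the argument for Theorem \ref{bif0}.
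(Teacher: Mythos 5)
Your plan captures the paper's core mechanism (split $W=\mathrm{Ker}\oplus\im$ via the Fredholm property and the periodicity obstruction, restrict $G(\cdot,\gamma_j)$ to a symmetry subspace chosen so that the kernel collapses to a single mode, apply Crandall--Rabinowitz, read off the transversality from \eqref{fff}), and the computation of the direct sum and of $G^{n,m}_{\tilde\rho\gamma}$ are essentially identical to what the paper does. But you omit the structural device that organizes the paper's proof: the dichotomy $j_2=k^2j_1$ for some integer $k\ge 2$ (Case B) versus $j_2\neq k^2j_1$ for all $k$ (Case A). That split is not decorative. In Case B the $j_1$-mode $\cos(n_1x_1)\cos(m_1x_2)=\cos\bigl((n_2/k)x_1\bigr)\cos\bigl((m_2/k)x_2\bigr)$ is a proper sub-harmonic of the $j_2$-mode and cannot satisfy the $\tfrac{2\pi}{n_2}$- and $\tfrac{2\pi}{m_2}$-periodicities of $(Y\cap E)_{n_2,m_2}$; one can in fact check that no $j_1$-mode survives in $(Y\cap E)_{n_2,m_2}$, since $\ell\ge 1$ forces $j_1\ge n_2^2>j_2/k^2=j_1$, and $\ell=0$, $b$ even forces $n_2^2=(b^2k^2-1)m_2^2$ with $b^2k^2-1$ not a perfect square. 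So in Case B the restricted kernel is genuinely one-dimensional. In Case A the paper restricts to both $(Y\cap E)_{n_1,m_1}$ and $(Y\cap E)_{n_2,m_2}$ and claims the larger count $\alpha_{j_1}+\alpha_{j_2}$.

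Your ``short arithmetic argument'' for the one-dimensionality of the kernel in $(Y\cap E)_{n,m}$ is where the gap sits. You correctly kill the $(a,b)=(2,0)$ mode by $4n^2\ge 2j_2$, but the worrisome mode is $(a,b)=(0,2)$: $\cos(2mx_2)$ belongs to $(Y\cap E)_{n,m}$ (it is $\tfrac{2\pi}{m}$-periodic, even, and invariant under $x_2\mapsto\tfrac{\pi}{m}-x_2$ because $2\cdot 0$-shift gives $\cos(2\pi-2mx_2)$), and $(0)^2+(2m)^2=4m^2$ can equal $j_1$ whenever $n^2>3m^2$. If $4m^2=j_1$ and $\gamma_{j_1}=\gamma_{j_2}$, the kernel of $G^{n,m}_{\tilde\rho}(0,\gamma_j)$ is at least two-dimensional and Crandall--Rabinowitz does not apply in that subspace. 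Precisely this is what Case B's hypothesis $j_2=k^2j_1$ rules out ($4m^2=j_1=j_2/k^2$ would force $n^2=(4k^2-1)m^2$, impossible). Your proposal, which works directly with $(Y\cap E)_{n,m}$ for every decomposition of $j_2$ and relegates the remaining obstructions to ``a finer symmetry subspace as in \cite{hx2023},'' therefore does not constitute a proof: you need either the paper's case split or an explicit description of the finer subspace that excludes the $\cos(2mx_2)$-type modes while keeping $\cos(nx_1)\cos(mx_2)$.
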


\begin{proof}
Set $\{ i\,|\,   \gamma_{i}=\gamma_{j}  \}=\{ j_1,j_2  \}(j_1<j_2)$.
We divide the argument into two cases.

Case A:
$j_2\neq k^2j_1$ for all positive integer $k$.

The fact that $j_2\neq k^2j_1$ for all positive integer $k$ implies that $(n_{2}, m_{2}) \neq k(n_{1}, m_{1})$ for all positive integer $k$.
We get that $S_{j_2, \gamma_{j_2}}(t)\cos(n_2x_1)\cos(m_2 x_2)\not\in (Y \cap E)_{n_1,m_1}$ and $S_{j_1, \gamma_{j_1}}(t)\cos(n_1x_1)\cos(m_1 x_2)\\\not\in (Y \cap E)_{n_2,m_2}$.
Similarly to the proof of Theorem \ref{bif0},
we obtain that $(0,\gamma_{j})$
is a bifurcation point, the number of branches at $(0,\gamma_{j})$ is $\alpha_{j_1}+\alpha_{j_2}$,
and the free boundaries are given by
\begin{align*}
y=\rho_*+ \epsilon S_{j_1, \gamma_{j_1}}(t) \cos(n_1 x_1)\cos(m_1 x_2) + O(\epsilon^2), \quad y=\rho_*+ \epsilon S_{j_2, \gamma_{j_2}}(t)\cos(n_2 x_1)\cos(m_2 x_2) + O(\epsilon^2).
\end{align*}
If $j_1=n^2$ or $j_2=n^2$, {there is  an   additional branch with
the free boundary}  \re{aaaa}.

Case B:
$j_2= k^2j_1$ for some positive integer $k$.

If $j_1=n_1^2+m_1^2$, then $k(n_{1}, m_{1})\in  \{(n_{2},m_{2})\,:\, j_2=n_{2}^2+m_{2}^2, n_{2}\geq m_{2}\}$.

Using Crandall-Rabinowitz theorem  to $ G^{n_2,m_2}$ where $j_2=n_2^2+m_2^2$,
 we obtain that $(0,\gamma_{j})$
is a bifurcation point and the number of branches at $(0,\gamma_{j})$ is $\alpha_{j_2}$.
Furthermore, the free boundary is given by
$$y=\rho_* + \epsilon S_{j_2, \gamma_{j_2}}(t)  \cos(n_2 x_1)\cos(m_2 x_2) + O(\epsilon^2).$$

If $j_1=n^2$, then  $j_2=k^2n^2$, there is an additional branch with
the free boundary
\begin{align*}
y=\rho_* + \epsilon S_{j_2, \gamma_{j_2}}(t) [\cos(knx_1)+\cos(knx_2)] + O(\epsilon^2).
\end{align*}
We complete the proof.
\end{proof}

\begin{rem}
    In previous steady-state bifurcation stemming from steady-state solutions of free boundary problems,
  $\mu$ or $\gamma$ has been taken as the bifurcation parameter for the branching problem and the complexity of the proof is the same.  Also, $\mu_j$ and $\gamma_j$ have a reciprocal relationship.
The reason that we choose $\gamma$ as a bifurcation parameter and not $\mu$ is that the complexity of the verification of the transversality condition is different.
From \eqref{1.15}, $\rho_*$ is dependent on $\mu$, but not on $\gamma$. This results in $\mu$ and $\gamma$ no longer being on the same footing.
Because $G_{\tilde{\rho}\mu }(0,\mu_{j})$ involves an unknown derivative $\frac{\partial \rho_* }{\partial \mu}$, the calculation of $G_{\tilde{\rho}\mu }(0,\mu_{j})$ and the verification of the transversality condition are much more complicated than that of $G_{\tilde{\rho}\gamma }(0,\gamma_{j})$.
\end{rem}

\appendix

\section{}

\subsection{Existence and Uniqueness of the Positive Global Solution and the Necessary and Sufficient Condition for Global Stability of Zero Equilibrium Solution
}
In this subsection, we shall give the existence and uniqueness of the positive global solution of \re{1.15}--\re{1.155} and the necessary and sufficient condition for global stability of zero equilibrium solution.
\vskip 3mm\noindent
{\it\bf Proof of Theorem \ref{thm:1.1a'}.}
 From the ODE theory, the local existence and uniqueness of the solution of \re{1.15}--\re{1.155} are obvious.

 Denote $ \Phi^{*}=\max _{0 \le t \le T} \Phi(t), \quad \Phi_{*}=\min _{0 \le t \le T} \Phi(t).$
 Since $\frac{ \tanh \rho}{\rho}$ is strictly decreasing in $\rho$ and $\frac{ \tanh \rho}{\rho}\in(0,1)$, we have
$$
-\tilde{\sigma}\rho(t) \leq \frac{d \rho}{d t}=\rho(t)\left[\Phi(t)  \frac{ \tanh( \rho(t))}{\rho(t)}-{\tilde{\sigma}}\right] \leq ( \Phi^{*}-\tilde{\sigma})\rho(t),
$$
which implies
\begin{align}\nonumber
 \rho(0) e^{-\tilde{\sigma} t} \leq \rho(t) \leq \rho(0) e^{( \Phi^{*}-\tilde{\sigma})t}.
\end{align}
 Hence, the solution doesn't blow up or disappear at a finite time.
 Then we get the results.
\hfill$\square$

\vskip 3mm

\noindent
{\it\bf Proof of Theorem \ref{thm:1.1'}.}
At first,  we prove the solution $\rho(t)\equiv0$ of \re{1.15} is globally stable if  $\overline{\Phi}<\tilde{\sigma}$.

 For any $t \in[0, T]$, from \re{1.15}, we have
$$
\rho(t+n T)=\rho(0) e^{\mu\displaystyle\int_{0}^{t+n T} \Phi(s) \frac{ \tanh( \rho(s))}{\rho(s)} d s-\mu\tilde{\sigma} (t+n T)}.
$$
Since $\frac{ \tanh \rho}{\rho}$ is monotone decreasing in $\rho>0$ and $\frac{ \tanh \rho}{\rho}<1$, for $\overline{\Phi}<\tilde{\sigma}$, we obtain
$$
\rho(t+n T) \le \rho(0) e^{\mu\displaystyle\int_{0}^{t+n T} \Phi(s)  d s-\mu\tilde{\sigma} n T} \le \rho(0)e^{\mu  T\overline{\Phi}}e^{\mu n T(\overline{\Phi}-\tilde{\sigma})} \rightarrow 0, \mm n \rightarrow \infty.
$$

 Next, we prove the zero equilibrium solution  of \re{1.15} is globally stable if  $\widetilde{\sigma}=\overline{\Phi}$. We shall show
\begin{align}
&\rho(t+T)\le \rho(t) \qquad for ~t>0, \label{3.33}\\
&\rho(t)\le \rho(a)e^{ \displaystyle\mu(\Phi^{*}-\widetilde{\sigma})T}  \qquad for ~  t\in[a,a+T], \qquad a\ge 0, \ \label{3.2}\\
&\liminf _{t \rightarrow+\infty}  \rho(t)=0, \ \label{3.9}\\
&\lim _{t \rightarrow+\infty} \rho(t)=0. \ \label{3.16}
\end{align}

Indeed, since $0<\frac{ \tanh \rho}{\rho}<1$, \re{1.15} implies
\begin{align}
 \frac{d \rho}{d t}\le \mu \rho(t)\left[\Phi(t)-\widetilde{\sigma}\right]. \ \label{3.222}
\end{align}
Then
$
\rho(t+T) \leq \rho(t)e^{ \int_{t}^{t+T}\mu\left[\Phi(t)-\widetilde{\sigma}\right] d t}=\rho(t)e^{  \displaystyle\mu\left[ \overline{\Phi}T  -\widetilde{\sigma}T\right]}=\rho(t),
$
i.e., \re{3.33} is true.

Applying \re{3.222}, we obtain
$
 \frac{d \rho}{d t}\le \mu \rho(t)\left[\Phi^{\ast}-\widetilde{\sigma}\right],
$
which implies
$$
 \rho(t)\le \rho(a)e^{ \displaystyle \mu(\Phi^{*}-\widetilde{\sigma} )(t-a)}\le \rho(a)e^{ \displaystyle \mu(\Phi^{*}-\widetilde{\sigma} )T},
$$
for  $t\in[a,a+T]$. Then \re{3.2} holds.

 We use the contradiction method to prove  \re{3.9}. Assume  that
$
\liminf _{t \rightarrow+\infty}  \rho(t)=\alpha>0.
$
For $\forall$ $\varepsilon\in(0, \alpha)>0$, there exists $M>0$ such that
\begin{align}
\rho(t)>\alpha-\varepsilon  \qquad for ~    t>M. \ \label{3.188}
\end{align}
Using \re{1.15} and the monotonicity of  $\frac{ \tanh \rho}{\rho}$ in $\rho$, we get
\begin{align}\nonumber
 \frac{d \rho}{d t}
 \le \mu \rho(t)\left[\Phi(t) \frac{ \tanh( \alpha-\varepsilon)}{\alpha-\varepsilon} -\widetilde{\sigma}\right] \qquad for ~   t>M.
\end{align}
Then
\begin{equation}\label{3.13}
\rho(t^{\ast}+nT) \le \rho(t^{\ast})e^{ \displaystyle \int_{t^{\ast}}^{t^{\ast}+nT}\mu\left[\Phi(t) \frac{ \tanh( \alpha-\varepsilon)}{\alpha-\varepsilon} -\widetilde{\sigma}\right] d t}=\rho(t^{\ast})e^{ \displaystyle\mu nT \left[\frac{ \tanh( \alpha-\varepsilon)}{\alpha-\varepsilon}\overline{\Phi}-\widetilde{\sigma}\right]},
\end{equation}
for   $t^{\ast}>M$ and  $n\geq1$ is an integer.
Applying \eqref{3.13} and the fact
$
\frac{ \tanh( \alpha-\varepsilon)}{\alpha-\varepsilon}\overline{\Phi}-\widetilde{\sigma}<\overline{\Phi}-\widetilde{\sigma}=0,
$
we get
\begin{align}\nonumber
 \rho(t^{\ast}+nT) \rightarrow0, \qquad n\rightarrow\infty.
\end{align}
It contracts with \re{3.188}, i.e., \re{3.9} is true.

Now we turn to \re{3.16}.
Using \re{3.9}, for $\forall$ $\varepsilon>0$, there exist $M_{0}>0$ and a sequence $t_{n}\rightarrow\infty$  such that
$
\rho(t_{n})<\varepsilon$  for     $t_{n}>M_{0}$.
\re{3.33} implies
$
\rho(t_{N}+kT)\le \rho(t_{N})< \varepsilon$
for $t_{N}>M_{0}$ and the integer $k\geq1$. For $t>t_{N}$, there exists $k_{0}$ such that $t\in[ t_{N}+k_{0}T,t_{N}+(k_{0}+1)T)$. Applying \re{3.2}, we have
\begin{align}\begin{array}{ll}\nonumber
\rho(t) \le \rho(t_{N}+k_{0}T)e^{ \displaystyle\mu(\Phi^{*}-\widetilde{\sigma})T} \le \varepsilon  \displaystyle e^{  \displaystyle\mu (\Phi^{*}-\widetilde{\sigma}) T}.
\end{array}
\end{align}
Then \re{3.16} holds.

As so far, we have shown that if $\widetilde{\sigma}\ge\overline{\Phi}$, $\rho(t)\equiv0$  is globally stable.
Finally, we prove that if $\rho(t)\equiv0$ of \re{1.15}--\re{1.155} is globally stable, then $\widetilde{\sigma}\ge\overline{\Phi}$.

Since  $\lim _{t \rightarrow \infty} \rho(t)=0$, for $\forall$ $\varepsilon>0$, there exists $M_{1}>0$ such that $\rho(t)<\varepsilon$ for $t \ge M_{1}$. Then the fact that $\frac{ \tanh \rho}{\rho}$ is strictly decreasing implies

\begin{align}\nonumber
\frac{d \rho}{\rho}
\ge \mu\left[\Phi(t)  \frac{ \tanh( \varepsilon)}{\varepsilon}-\tilde{\sigma}\right] dt, \quad t \ge M_{1}.
\end{align}
Hence
$
\frac{\rho(t+T)}{\rho(t)} \ge e^{\mu T\left(\overline{\Phi}  \frac{ \tanh( \varepsilon)}{\varepsilon}-\tilde{\sigma}\right)}.
$
 If $\overline{\Phi}>\tilde{\sigma}$, we choose $\varepsilon$  such that $ \frac{ \tanh( \varepsilon)}{\varepsilon}>\frac{\tilde{\sigma}}{ \overline{\Phi}}$.  Then
$
\frac{\rho(t+T)}{\rho(t)} >1,
$
 which contradicts to the $\lim _{t \rightarrow \infty} \rho(t)=0$. Thus $\widetilde{\sigma}\ge\overline{\Phi}$ holds.
\hfill$\square$

\subsection{Existence, Uniqueness and Stability of the Periodic Solution
}\label{result}

In this subsection, we shall prove the existence, uniqueness and stability of the periodic solution of \re{1.15}--\re{1.155}.

\vskip 3mm\noindent
{\it\bf Proof of Theorem \ref{thm:1.2}.}
Because $\widetilde{\sigma}<\overline{\Phi}\le\Phi^{\ast}$ , $0<\frac{ \tanh \rho}{\rho}<1$ and $\frac{ \tanh \rho}{\rho}$ is strictly decreasing, we have $x_{2}=(\frac{ \tanh \rho}{\rho})^{-1}(\frac{\widetilde{\sigma}}{\Phi^{\ast}})$ and $\overline{x}=\frac{(\frac{ \tanh \rho}{\rho})^{-1}(\frac{\widetilde{\sigma}}{\overline{\Phi}})}{e^{\mu(\Phi^{*}-\widetilde{\sigma})T}}$ are well defined, and
$
\overline{x}<x_{2}.
$
For each $\rho_{0}\in[\overline{x}, x_{2}]$, we define the mapping $F$: $[\overline{x}, x_{2}]\rightarrow \mathbb{R}$ by
 $F(\rho_{0})=\rho(T)$, where $\rho$ is the solution of \re{1.15}--\re{1.155}.

At first, we prove that $F$ maps $[\overline{x}, x_{2}]$ into $[\overline{x}, x_{2}]$. Let $\rho(0)\in[\overline{x}, x_{2}]$.
Notice that $x_{2}$ is the upper solution of \re{1.15}--\re{1.155}. Applying the comparison theorem, we have
$
\rho(t)\le x_{2} $  for $t>0.
$
Hence
\begin{align}
\rho(T)\le x_{2}. \ \label{4.3}
\end{align}
We define $\overline{\rho}$ by the solution of \re{1.15} with $\overline{\rho}(0)=\overline{x}$.
The comparison theorem implies
\begin{align}
\rho(t)\ge \overline{\rho}(t) \qquad for ~ t>0. \ \label{4.6}
\end{align}
Applying  $0<\frac{ \tanh \rho}{\rho}<1$,  we have
$
  \frac{d \overline{\rho}}{d t}
  \le\mu \overline{\rho}(t)\left[\Phi^{\ast}- \widetilde{\sigma}\right].
$
Then
\begin{equation}\nonumber
\overline{\rho}(t) \leq \overline{x} e^{ \displaystyle\mu(\Phi^{*}-\widetilde{\sigma}) t } \leq \overline{x} e^{ \displaystyle\mu(\Phi^{*}-\widetilde{\sigma}) T}=\left(\frac{ \tanh \rho}{\rho}\right)^{-1}\left(\frac{\widetilde{\sigma}}{ \overline{\Phi}}\right) \qquad for ~t\in[0,T].
\end{equation}
Applying the fact that  $\frac{ \tanh \rho}{\rho}$ is strictly decreasing, we get
$
\frac{ \tanh \overline{\rho}(t)}{\overline{\rho}(t)}\geq\frac{\widetilde{\sigma}}{\overline{\Phi}}$ for $t\in[0, T].
$
Hence,
\begin{align*}
  \frac{d \overline{\rho}}{d t}\ge  \mu \overline{\rho}(t)\left[ \Phi(t)\frac{\widetilde{\sigma}}{\overline{\Phi}}- \widetilde{\sigma}\right] \qquad for ~ t\in(0, T),
\end{align*}
which implies
\begin{align}
\overline{\rho}(T)\ge \overline{\rho}(0)e^{\displaystyle \int_{0}^{T}\mu\left[\Phi(t)\frac{\widetilde{\sigma}}{\overline{\Phi}}-\widetilde{\sigma}\right]dt}=\overline{\rho}(0)=\overline{x}.
\label{4.10}
\end{align}
From \re{4.3}, \re{4.6} and \re{4.10}, we have
$\rho(T)\in[\overline{x}, x_{2}].$
Hence  $F$ maps $[\overline{x}, x_{2}]$ into $[\overline{x}, x_{2}]$. From the continuous dependence of the solution $\rho$ on the initial value $\rho_{0}$, we have  $F$ is continuous. Using Brouwer's fixed point theorem,  it follows  that $F$ has a fixed point $\rho_{\ast}(0)$. Then the solution $\rho_{\ast}$ of \re{1.15}--\re{1.155}  with $\rho(0)=\rho_{\ast}(0) $ is a $T-$periodic  positive solution. We shall prove the uniqueness of the periodic solution later.

Next we turn to  prove $(ii)$.
Let
\begin{align}\nonumber
\rho_{min}=\min\limits_{t>0}\left\{\rho_{\ast}(t)\right\}\qquad \text{and} \qquad \rho_{max}=\max\limits_{t>0}\left\{\rho_{\ast}(t)\right\}.
\end{align}
The uniqueness of the solution of \re{1.15}--\re{1.155} implies that $\rho_{min}>0$ and $\rho_{max}>0$.
Assume  $\rho(t)$ is the solution of \re{1.15}--\re{1.155}  with  $\rho(0)>0$. Let
\begin{align}\label{qq}
\rho(t)=\rho_{\ast}(t) e^{y(t)}.
\end{align}
In order to prove $
|\rho(t)-\rho_{\ast}(t)|\le C e^{-\delta t}$,
 we need to  prove that there exist $\delta>0$ and $C>0$ such that
\begin{align}\nonumber
|e^{y(t)}-1|\le C e^{-\delta t}\qquad for ~    t>0.
\end{align}
Taking \re{qq} into \re{1.15}--\re{1.155}, we get
\begin{align}\begin{array}{ll}
y'(t) =  \mu\Phi(t)\Big[ \displaystyle\frac{ \tanh (\rho_{\ast}(t)e^{y(t)})}{\rho_{\ast}(t)e^{y(t)}}  -\displaystyle\frac{ \tanh (\rho_{\ast}(t))}{\rho_{\ast}(t)} \Big].
\end{array}
\label{4.12}
\end{align}
The uniqueness of the solution of \re{1.15}--\re{1.155} implies that $\rho(t)>\rho_{*}(t)$  if $\rho(0)>\rho _{*}(0)$ and $\rho(t)<\rho_{*}(t)$ if $\rho(0)<\rho _{*}(0)$. Then $y(t)>0$ if  $y(0)>0$ and $y(t)<0$ if  $y(t)<0$. Therefore, according to the sign of $y(t)$, we divide the arguments into two cases.

Case 1: $y(t)>0$.

Applying \re{4.12} and the strictly monotonicity  of  $\frac{ \tanh \rho}{\rho}$ in $\rho$, we have $y'(t)<0$.
Using  \re{4.12}  and the mean value theorem, we have
\begin{align}\nonumber
y'(t) e^{y(t)}=  \mu\Phi(t) (\displaystyle\frac{ \tanh \rho}{\rho})'(\zeta(t))\rho_{\ast}(t)(e^{y(t)}-1)e^{y(t)}   \le -\mu\Phi_{\ast}M_{min} \rho_{min}   (e^{y(t)}-1)\nonumber,
\end{align}
here,  $\zeta(t)\in[\rho_{\ast}(t), \rho_{\ast}(t)e^{y(t)}]\subseteq [\rho_{min}, \rho_{max}e^{y(0)}]$
and $M_{min}=\min\limits_{x\in [\rho_{min}, \rho_{max}e^{y(0)}]}\left\{-(\frac{ \tanh \rho}{\rho})'\right\}>0$.
Therefore,
\begin{align}\nonumber
e^{y(t)}-1\le (e^{y(0)}-1)e^{-\mu\Phi_{\ast}M_{min} \rho_{min} t}\qquad for ~    t>0.
\end{align}

Case 2: $y(t)<0$.

The analysis of $y(t)<0$ is similar to Case 1  excepting that $M_{min}$ replaces $\overline{M}_{min}=\\\min  \limits_{ x\in [\rho_{min}e^{y(0)}, \rho_{max}]}\left\{(-\frac{ \tanh \rho}{\rho})'\right\}$ $>0$, we omit it.
Taking $\delta=\min\{\mu\Phi_{\ast} {M}_{min}\rho_{min} , \mu\Phi_{\ast}\overline{M}_{min} \rho_{min} e^{y(0)}\}$ and $C=|1-e^{y(0)}|$,   we get \re{1.17}.

Finally, we prove the  uniqueness of  solution $\rho_{\ast}(t)$. Otherwise, using \re{1.17}, we have
$$
\left|\rho_{*}^{1}(t)-\rho_{*}^{2}(t)\right|\leq\left|\rho(t)-\rho_{*}^{1}(t)\right|+\left|\rho(t)-\rho_{*}^{2}(t)\right| \rightarrow 0 \qquad t\rightarrow \infty,
$$
i.e., $\rho_{*}^{1}(t)=\rho_{*}^{2}(t)$. Then we get the result.\hfill$\square$

\section*{Acknowledgement}

{\small The first author was supported by the National Natural Science Foundation of China (12401261), the Fundamental Research Program of Shanxi Province, China
(202203021222011), and the Special Fund for Science and Technology Innovation Teams of Shanxi Province, China (202204051002015). The second author was supported by the National Natural Science Foundation of China (12171120).
The third author was supported by the Guangdong Basic and Applied Basic Research Foundation (2025A1515010922).}

\bigskip

\bibliographystyle{unsrt}
\bibliography{main}

\end{document}